\definecolor{pink}{rgb}{1,0,1}
\definecolor{purple}{rgb}{0.4,0.2,1}
\author{Lashi Bandara}
\author{Anisa Hassan}
\title{The space of rough Riemannian metrics}
\date{\today}
\address{Lashi Bandara,
	Deakin Mathematics Group, 
	Deakin University Melbourne Burwood Campus, 
	221 Burwood Highway, Burwood, Victoria, Australia, 3125
}
\urladdr{\href{https://lashi.org}{https://lashi.org}}
\email{\href{mailto:lashi.bandara@deakin.edu.au}{lashi.bandara@deakin.edu.au}}
\address{Anisa Hassan, 
	Mathematics Department, 
	Brunel University of London, 
	Kingston Lane, Uxbridge, Middlesex,  UB8 3PH
}
\email{\href{mailto:anisa.hassan@brunel.ac.uk}{anisa.hassan@brunel.ac.uk}}
\keywords{Rough Riemannian metric, non-smooth metric, space of metrics, intrinsic metric, non-compact manifolds}  
\subjclass[2020]{53C23, 58A05, 58D17}
\def\colour{\colour}
\newcommand{\cbrac}[1]{\left(#1\right)}
\newcommand{\dbrac}[1]{\left\{#1\right\}}
\newcommand{\Av}{\mathrm{Av}} 		
\newcommand{\modulus}[1]{|#1|}
\newcommand{\norm}[1]{\| #1 \|}			
\newcommand{\set}[1]{\dbrac{#1}}
\newcommand{\close}[1]{\overline{#1}} 
\newcommand{\union}{\cup} 
\newcommand{\interior}[1]{\mathring{#1}}	
\newcommand{\tensor}{\otimes}
\newcommand{\Ell}{\mathrm{Ell}_{\mathrm{loc}}}			%
\newcommand{\cE}{\mathcal{E}}
\newcommand{\cM}{\mathcal{M}}
\newcommand{\cN}{\mathcal{N}}
\newcommand{\cU}{\mathcal{U}}
\DeclareMathOperator{\graph}{graph}
\DeclareMathOperator{\ident}{id}
\DeclareMathOperator{\Id}{I}
\DeclareMathOperator{\spt}{spt}
\newcommand{\rest}[1]{{{\lvert_{}}_{}}_{#1}}
\newcommand{\Met}{\mathrm{Met}}
\newcommand{\Lp}[2][{}]{{\rm L}^{#2}_{\rm #1}}		
\newcommand{\Ck}[2][{}]{{\rm C}^{#2}_{\rm #1}}		
\newcommand{\Hard}[2][{}]{{\rm H}^{#2}_{\rm #1}}		
\newcommand{\Sob}[2][{}]{{\rm W}^{#2}_{\rm #1}}		
\newcommand{\SobH}[2][{}]{\Hard[#1]{\rm #2}}
\newcommand{\Tensors}[1][{}]{{\mathcal{T}}^{(#1)}}	
\DeclareSymbolFont{script}{U}{eus}{m}{n}	
\DeclareMathSymbol{\bwedge}{0}{script}{"5E}	
\newcommand{\Forms}[1][{}]{\bwedge^{#1} {\kern 0.05em}}		
\newcommand{\tanb}{{\rm T}}		
\newcommand{\cotanb}{{\rm T}^\ast}	
\DeclareMathOperator{\End}{End} 
\DeclareMathOperator{\Sym}{Sym} 
\DeclareMathOperator{\e}{e}
\newcommand{\Hom}[1][*]{\mathcal{H}^{#1}}
\newcommand{\Dir}{{\rm D} }
\newcommand{\spec}{\mathrm{spec}}		
\DeclareMathOperator{\diag}{diag} 
\newcommand{\mg}{\mathrm{g}}
\newcommand{\mgt}{\widetilde{\mg}}
\newcommand{\mh}{\mathrm{h}}
\newcommand{\mht}{\widetilde{\mh}}
\newcommand{\Leb}{\mathcal{L}}			
\newcommand{\R}{\mathbb{R}}			
\newcommand{\C}{\mathbb{C}}			
\newcommand{\Q}{\mathbb{Q}}			
\newcommand{\Na}{\mathbb{N}}			
\newcommand{\conj}[1]{\overline{#1}}		
\newcommand{\dom}{\mathrm{dom}}
\newcommand{\ran}{\mathrm{ran}}
\newcommand{\inprod}[1]{\left\langle #1 \right\rangle}	
\DeclareMathOperator{\divv}{div}		
\DeclareMathOperator{\Lap}{\Delta}
\newcommand{\Ball}{\mathrm{B}}			
\newcommand{\Ric}{{\rm Ric}}			
\newcommand{\dR}{\mathrm{dR}}
\newcommand{\Sing}{\mathrm{Sing}}
\newcommand{\Reg}{\mathrm{Reg}}
\newcommand{\extd}{{\rm d}}			
\newcommand{\Comp}{\mathrm{Comp}}
\newcommand{\met}{\mathrm{d}}
\newcommand{\rmet}{{\mathbbm{d}\kern -0.16em \mathrm{l}}}
\newcommand{\action}{\eta}
\newcommand{\Riem}{\mathrm{Riem}} 
\newcommand{\ic}{\mathrm{in}} 
\DeclareMathOperator{\len}{len}
\newcommand{\Spa}{\mathrm{Spa}}
\newcommand{\Hil}{\mathcal{H}}
\newcommand{\hker}{\uprho}
\DeclareMathOperator{\rank}{rank}
\newcommand{\op}{\mathrm{op}}
\newcommand{\Hinfty}{\ensuremath{\mathrm{H}^\infty}}
\DeclareMathOperator*{\esssup}{ess\, sup}
\renewcommand{\Re}{\mathrm{Re}\ }
\renewcommand{\emptyset}{\varnothing}
\renewcommand{\epsilon}{\varepsilon}
\newtheorem{thm}{Theorem}[section]
\newtheorem{lem}[thm]{Lemma}
\newtheorem{cor}[thm]{Corollary}
\newtheorem{prop}[thm]{Proposition}
\theoremstyle{definition}
\newtheorem{defn}[thm]{Definition}
\newtheorem{rem}[thm]{Remark}
\begin{document}

\vspace*{-2em}
\begin{abstract}
On a smooth connected manifold, we consider all possible locally elliptic and locally bounded measurable coefficient Riemannian metrics called rough Riemannian metrics.
We equip this set with an extended metric which is connected if and only if the manifold is compact.
We prove that this is a complete length extended metric space and the components on which the distance is finite are path-connected.
Moreover, we identify  the closure of smooth metrics in this space to be continuous metrics.
\end{abstract} 
\maketitle

\tableofcontents\vspace*{-2em}

\parindent0cm
\setlength{\parskip}{0.8\baselineskip}


\section{Introduction} 

Non-smooth geometry is a vast, important and contemporary field of mathematics boasting multiple subfields which has enjoyed considerable attention and development of late.
Our focus in this paper is restricted to a special class of non-smooth geometries - those that arise from non-smooth metrics on smooth manifolds.
More precisely, on a smooth, connected manifold $\cM$ without boundary, we consider \emph{rough Riemannian metrics} which are supported on $\cM$.
These are locally bounded and locally elliptic measurable coefficient versions of smooth Riemannian metrics.
However, rather than focusing on results pertaining to particular rough metrics, we consider the set of all possible such metrics $\Met(\cM)$ and endow it with a natural extended metric space structure.

The induced topology can be thought of as capturing the notion of ``$\Lp{\infty}$'' without having to fix a background metric.
We identify important features of this extended metric space, including connected components $\Comp(\mg)$ associated to $\mg \in \Met(\cM)$, on which many interesting geometric and analytic properties, including $\Lp{p}$-spaces, are preserved.
In addition to approximation techniques afforded by the metric structure, preservation of vital quantities on components allow for the possibility of exploiting perturbation and operator-theoretic methods in the study of geometric problems.

Before we continue to describe our framework and results, we must emphasise that rough Riemannian metrics, either implicitly or not explicitly being named as such, have been studied for some time as they arise naturally in geometry, physics and engineering. 
While it is prohibitive to provide an exhaustive list of contributions, we provide some references to studies that are relevant to the results of this paper.

The earliest examples known to us arise in the works of De Giorgi in \cite{DeGiorgi}, Moser in \cite{Mo1,Mo2}, and Nash in \cite{Nash}.
There, they considered second-order operators of the form $-\divv A \nabla$ where $A$ are uniformly elliptic, bounded and measurable coefficients.
They obtained a priori regularity estimates for solutions of elliptic and parabolic equations featuring $-\divv A \nabla$. 
Though standard, these results are hailed today as remarkable and are cornerstone in the modern treatment of PDE.
In Subsection~\ref{S:DivForm}, we show that operators $-\divv A \nabla$  are Laplacians related to rough Riemannian metrics modulo multiplication by  measurable functions uniformly bounded above and below.
Another related example includes Lipschitz boundary value problems as they involve pullbacks to the upper-half space via lipeomorphisms, which induce bounded, measurable coefficients operators. 
These can again be captured in the rough Riemannian metric language as shown in Subsection~\ref{S:Pullback}.

Rough Riemannian metrics feature in a deliberate form in \cite{Teleman} due to Teleman.
There, the author studied signature operators for rough Riemannian metrics,  even allowing for the the underlying manifold to be only Lipschitz, though restricting to the compact setting.
Results obtained by De Cecco-Palmieri \cite{DP90, DP91, DP95} also live in the Lipschitz manifold context, this time allowing for non-compactness. 
These authors prove important properties of an induced distance associated to a rough Riemannian metric.
In \cite{Norris}, Norris proves Varadhan asymptotics, relating the induced distance of a rough Riemannian metric on $\cM$ to small-time asymptotics of the heat kernel related to its Laplacian, again on a Lipschitz manifold.
Sturm in \cite{Sturm} also discusses such metrics and demonstrates that the distance structure does not determine the rough Riemannian metric, referred to there as the ``diffusion coefficients''.

Our choice to consider smooth differentiable manifolds is intentional. 
If we allow the manifold to be merely Lipschitz, then the transition maps on the tangent bundle are, in general, locally bounded and measurable.
Therefore, the tangent bundle cannot support smooth or even continuous sections.
In our study, we are particularly interested in studying the role of higher regularity structures, such as smooth or continuous metrics, within  $\Met(\cM)$.
Since every $\Ck{1}$-manifold is smoothable, our choice of assuming that $\cM$ has a smooth differentiable structure is warranted. 

Since every rough metric $\mg \in \Met(\cM)$ induces a measure-metric structure $(\cM,\met_{\mg},\mu_{\mg})$, it would seem sensible to study the set of such triples to better interface 
with methods and techniques, such as pointed Gromov-Hausdorff convergence, in the measure-metric space setting. 
However, we show in Subsection~\ref{S:InducedMMs} that $(\cM,\met_{\mg},\mu_{\mg})$ does not uniquely determine $\mg$. 
Therefore, $\Met(\cM)$ is strictly richer in structure which justifies our choice to study $\Met(\cM)$.

Rough Riemannian metrics on smooth manifolds have been studied now for some time.
In their current form, they were identified by Bandara in \cite{BRough} as a class of geometric invariants to the famous \emph{Kato square root problem}, as seen from a first-order perspective via the \Hinfty-functional calculus due to  Axelsson(Rosén)-Keith-McIntosh in \cite{AKMC}.
Salient points of this problem and its relationship to our work here is outlined in Subsection~\ref{S:FCKato}.
Rough Riemannian metrics were also featured in the study of the so-called Gigli-Mantegazza flow, initially formulated by Gigli-Mantegazza in \cite{GM}. 
Regularity properties of this flow for rough Riemannian initial data was studied in \cite{BLM} by Bandara-Lakzian-Munn and by Bandara in \cite{BCont}.
Weyl asymptotics associated to the Laplacians induced by rough Riemannian metrics on smooth compact manifolds were obtained by Bandara-Nursultanov-Rowlett in \cite{BNR}.
More recently, the term \emph{geometric singularities} with respect to rough Riemannian metrics was made precise in \cite{BH} by Bandara-Habib, where versions of the Hodge theorem are obtained. 
This justification rests ultimately on their results which assert the kernel of the Hodge-Dirac operator induced by a rough Riemannian metric is isomorphic to the smooth singular cohomology when the underlying manifold is compact.
Although similar results for the de Rham cohomology are obtained in \cite{Teleman} for Lipschitz manifolds, the study in \cite{BH} exploits the smooth differentiable structure to access the smooth singular cohomology.

The geometric perspective developed in this paper helps provide an alternative vantage point to pre-existing results and simultaneously provide new tools in the analysis of problems in geometry and PDE.
As aforementioned, we are mainly concerned with the entire collection of rough Riemannian metrics.
On connected components $\Comp(\mg)$, many interesting properties are preserved. 
For instance, the $\Lp{p}$-spaces are fixed (as sets) on $\Comp(\mg)$ with equivalent norms for $p \in [1,\infty]$ as given in Subsection~\ref{S:Preserve}.
Poincaré inequalities are also preserved across $\Comp(\mg)$ in a quantifiable manner, given in Subsection~\ref{S:Poincare}.
The presence of a single metric $\mh \in \Comp(\mg)$ which is smooth, complete and with Ricci curvature bounded below ensures that the heat kernel of any Laplacian induced by a metric in $\Comp(\mg)$ enjoys Hölder regularity.
This result, due to Saloff-Coste in \cite{SC}, rephrased in the language of this paper, is showcased in Subsection~\ref{S:Ricci}.

Our study in this paper was borne out of considerations in \cite{BRough} to develop a ``meta'' understanding of the Kato square root problem. 
There, it is shown through \emph{large} perturbations permitted by the stability of the functional calculus that solutions to the Kato square root problem can be obtained on geometries that violate the geometric hypotheses initially used to prove a smooth Kato square root problem. 
While \cite{McIntosh72} provides a counterexample to the Kato square root problem as an abstract construction, in general, there are no known geometric counterexamples. 
Perhaps more seriously, there are no known geometric counterexamples to the \Hinfty-functional calculus. 
One important feature of the first-order characterisation of the Kato square root problem in \cite{AKMC} is ``functional calculus implies Kato''.
Therefore, finding a geometric counterexample to the Kato square root problem yields a geometric counterexample to functional calculus for free.
The study of $\Met(\cM)$ initiated in this paper is expected to provide scaffolding to better understand the structure of these problems and ultimately, produce geometric counterexamples. 

This paper is organised as follows.
In the following section, Section~\ref{S:Results}, we provide a brief of the notation, the main constructions, definitions and theorems.
This is designed so that the reader can easily access and utilise the significant results of this paper without having to labour through proofs and technicalities.
Section~\ref{S:Examples} is dedicated to showcasing examples, applications and consequences of our results.
In Section~\ref{S:RRMs}, we present some important results regarding the induced measure and metric structures of rough Riemannian metrics, before discussing the entire space of rough Riemannian metrics. 
Importantly, we identify a group within $\Lp[loc]{\infty}$, playing a role similar to the general-linear group, which acts on $\Met(\cM)$.
Section~\ref{S: SmoothRRM} is dedicated to the study of smooth Riemannian metrics and we identify their closure in the endowed topology.
Lastly, completeness is studied in Section~\ref{S:Completeness} and connectedness, in particular path-connectedness, in Section~\ref{S:Connectedness}.

\section*{Acknowledgements} 
Some part of this research was conducted during a research visit by L.B. to Brunel University of London.
L.B. acknowledges the gracious hospitality of Anne-Sophie Kaloghiros and Brunel University of London. 
A.H. was supported by UKRI EPSRC Studentship EP/W524542/1 2790227.
Both authors wish to express their gratitude to Sergey Mikhailov for his interest and encouragement of this paper. 

\section{Results} 
\label{S:Results} 

\subsection{Notation}
\label{S:Notation}

Throughout, we fix $\cM$ to be a smooth connected manifold by which we mean it is equipped with a smooth differentiable structure and it has no boundary.
A vector bundle $\cE \to \cM$ is also assumed to be smooth and its dual bundle is denoted by $\cE^\ast$.
The endomorphism bundle is denoted by $\End \cE$ and the symmetric endomorphism bundle by $\Sym\End \cE$.
Tensors of rank $(r,s)$ are denoted by $\Tensors[r,s] \cE := \otimes_{j=1}^r  \cE^\ast \otimes_{k=1}^s \cE$.
By definition, $\Tensors[0,0] \cE$ are functions.
The set of $k$-differentiable sections of $\cE$ are denoted by $\Ck{k}(\cM;\cE)$. 
Those that are compactly supported are denoted by $\Ck[cc]{k}(\cM;\cE)$.

When there is a metric $\mh$ on $\cE \to \cM$, we assume the induced metric on $\End(\cE)$ is the operator norm, $\norm{\cdot}_{\op,\mh(x)}$ on fibres of $\cE$.
Although $\End(\cE)$ can be identified with $\Tensors[1,1]\cE$, the induced metric on $\cE$ is given by $\mh^\ast \tensor \mh$.
Pointwise, the former norm is the operator norm and the latter the Frobenius norm. 
Note they are equivalent with the constant depending on $\rank \cE$.  

The differentiable structure of a manifold $\cM$ affords us with a (metric independent) measure structure (see Section~\ref{S:RRMs}) as well as a notion of zero measure.  
This allows us to talk about the measurable sections of a vector bundle $\cE \to \cM$, denoted by $\Lp{0}(\cM;\cE)$.
Furthermore, this leads to metric independent local Lebesgue spaces $\Lp[loc]{p}(\cM;\cE)$, the set of $u \in \Lp{0}(\cM;\cE)$ such that $\displaystyle{\int_{K} \modulus{u}_{\mh^K}^p\ d\mu < \infty}$ for $p \in [1, \infty)$ where $\mh^K$ is any smooth metric for $\cE$ on every compact $K \subset \cM$ and $\mu$ is a measure induced by a smooth Riemannian metric.
Similarly, the space $\Lp[loc]{\infty}(\cM;\cE)$ is the set of $u\in \Lp{0}(\cM;\cE)$ such that $|u|_{\mh^K} < \infty$ almost-everywhere for $\mh^K$ a smooth metric for $\cE$ on compact $K \subset \cM$.

\subsection{Rough Riemannian metrics}
We begin with recalling the notion of a rough Riemannian metric from \cite{BRough}.

\begin{defn}[Rough Riemannian metric (RRM)]
\label{Def:RRM}
Suppose that $\mg \in \Lp{0}(\Sym\End \Tensors[2,0] \cM)$  satisfies the \emph{local comparability condition}: for every $x \in M$, there exists a chart $(U, \psi)$ near $x$ and constant $C_U \geq 1$ (dependent on $(U, \psi)$), such that
\begin{equation}\label{cc1}
C_U^{-1}|u|_{\psi^{*}\delta(y)} \leq |u|_{\mg(y)} \leq C_U|u|_{\psi^{*}\delta(y)},
\end{equation}
for all $u\in \tanb_{y}U$ and for almost-every $y\in U$, where $|u|_{\mg(y)}= \sqrt{\mg(y)[u,u]}$. 
Then we say that $\mg$ is a \emph{Rough Riemannian metric} or \emph{RRM} for short. 
\end{defn}

\begin{rem}
Equivalently, an RRM is a $\mg \in \Lp{0}(\Sym\End \Tensors[2,0]\cM)$ such that $\mg \in \Lp[loc]{\infty}(\Sym\End \Tensors[2,0]\cM)$ and $\mg^{-1} \in \Lp[loc]{\infty}(\Sym\End\Tensors[2,0]\cM)$.
Here, $\mg^{-1}$ is identified with the co-metric, locally given by $\mg^{-1} = (\mg_{ij})^{-1}$. 
See Lemma~\ref{Lem:RoughMetEquiv}.
\end{rem}

It is easy to see that every smooth, or even continuous coefficient Riemannian metric is a RRM.
As in the case of a smooth metric, a RRM induces a Radon measure which takes the local expression  
\[ 
d\mu_{\mg}(x)= \sqrt{\det \mg(x)}\ d\psi^{*}\mathcal{L}(x)
\] inside a chart $(U,\psi)$ with $\Leb$ the Lebesgue measure (see Proposition~\ref{Prop:Radon}). 
Although it is not possible to naïvely minimise along curves as in the smooth case to obtain a distance, the expression
\begin{equation}
\label{eq:d_g}
\met_{\mg}(x,y)=\sup \{w(y)-w(x): w \in \Ck{0,1}(\cM),\quad |\nabla w|_{\mg^{-1}}\leq 1\ \text{a.e.}\},   
\end{equation}
where $\modulus{\nabla w}_{\mg^{-1}} = \mg^{ij}\ \partial_i w \partial_j w$ inside a chart with $(\mg^{ij}) = (\mg_{ij})^{-1}$,
yields a well-defined distance on $\cM$. 
This metric is, in fact, a length metric (see Proposition~\ref{Prop:Intrinsic}).
It is is equivalent to the minimisation along curves if $\mg$ is smooth (see Proposition~\ref{Prop:Distance}). 
Therefore, every rough Riemannian metric $\mg$ induces a Radon measure-metric length space $(\cM,\met_{\mg},\mu_{\mg})$.

\subsection{The space of Rough Riemannian Metrics}

The central theme of this paper is to consider and study the set of all rough Riemannian metrics on $\cM$.
We introduce the following notation to denote this set.
\begin{equation*}
\Met(\mathcal{\cM}):=\{\mg \text{ is a rough Riemannian metric on } \cM\}.
\end{equation*}

We define an extended distance metric on this set.
For that, recall from  \cite{BRough} two metrics $\mg,\mh \in \Met(\cM)$ are said to be close if there exists $C \geq 1$ such that
\begin{equation}\label{ccc}
C^{-1}|u|_{\mh(x)} \leq |u|_{\mg(x)} \leq C|u|_{\mh(x)}
\end{equation}
for all $u \in \tanb_x\cM$ and for almost-every $x \in \cM$. 
For brevity, we write  $\mg \sim_{C} \mh$ to also capture the constant $C \geq 1$ which quantifies the closeness.
With this notation in hand, we formulate the central object of study in this paper:

\begin{defn}\label{DefRoughdistance}
Define $\rmet^\cM: \Met(\cM) \times \Met(\cM) \to [0,\infty]$ by 
\begin{equation}
\rmet^\cM(\mg,\mh)
:=
\begin{cases}
\ \inf \{\log (C): \mg \sim_{C} \mh\} & \exists \tilde{C} \geq 1 \text{ s.t. } \mg \sim_{\tilde{C}}  \mh,\\
\ +\infty \qquad \qquad\qquad \qquad&\text{otherwise}.
\end{cases}
\end{equation}
for $\mg, \mh \in \Met(\cM)$.
\end{defn}

As a first, we obtain the following theorem. 
\begin{thm}
\label{Thm:Complete}
Let $\cM$ be a smooth connected manifold. 
Then, the space $(\Met(\cM), \rmet^\cM)$ is a complete extended length metric space.
\end{thm}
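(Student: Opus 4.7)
The plan is to verify the three required properties in turn: the extended metric axioms, the length property via an explicit length-realising path, and completeness via an $\Lp[loc]{\infty}$-limit argument.

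The extended metric axioms follow from the multiplicative-to-additive correspondence built into the definition. Reflexivity holds since $\mg \sim_1 \mg$, and symmetry of $\sim_C$ transfers to $\rmet^\cM$. Triangle inequality reduces to the observation that if $\mg \sim_{C_1} \mh$ and $\mh \sim_{C_2} \mh'$, then $\mg \sim_{C_1 C_2} \mh'$, so taking logarithms and infima gives $\rmet^\cM(\mg, \mh') \leq \rmet^\cM(\mg, \mh) + \rmet^\cM(\mh, \mh')$. For separation, if $\rmet^\cM(\mg, \mh) = 0$ then $\mg \sim_{e^{1/n}} \mh$ for all $n$, forcing $\mg = \mh$ almost everywhere.

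For the length property, fix $\mg, \mh$ with $d := \rmet^\cM(\mg, \mh) < \infty$. First, the infimum in the definition is attained: since $\log C$ equals half the essential supremum of $|\log \lambda(x)|$ over the pointwise generalised eigenvalues of $\mh$ with respect to $\mg$, we have $\mg \sim_{e^d} \mh$ exactly. Introduce the $\mg$-symmetric endomorphism $B \in \Lp[loc]{\infty}(\End \tanb \cM)$ determined by $\mh(u, v) = \mg(Bu, v)$; the closeness condition places its spectrum almost-everywhere in $[e^{-2d}, e^{2d}]$. Applying pointwise Borel functional calculus to $B$ yields a measurable $B^t$ for each $t \in [0, 1]$, and define $\mg_t(u, v) := \mg(B^t u, v)$, so that $\mg_0 = \mg$ and $\mg_1 = \mh$. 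A direct calculation gives $\mg_s^{-1} \mg_t = B^{t-s}$, whose spectrum lies in $[e^{-2d|t-s|}, e^{2d|t-s|}]$ a.e., so $\rmet^\cM(\mg_s, \mg_t) \leq d|t-s|$. Summing along any partition of $[0,1]$ and combining with the triangle inequality forces the length of $t \mapsto \mg_t$ to equal $d$; hence $\mg_t$ is a geodesic and $(\Met(\cM), \rmet^\cM)$ is a length (indeed geodesic) space.

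For completeness, let $(\mg_k) \subset \Met(\cM)$ be Cauchy. Given $\epsilon > 0$, pick $N$ with $\mg_j \sim_{e^\epsilon} \mg_k$ for $j, k \geq N$. In any chart, using a smooth reference metric $\delta$ to represent each $\mg_k$ as a symmetric matrix, the spectral relation $e^{-2\epsilon} \mg_j \leq \mg_k \leq e^{2\epsilon} \mg_j$ yields $\norm{\mg_j(x) - \mg_k(x)}_{\op, \delta} \leq (e^{2\epsilon} - 1)\, \norm{\mg_N(x)}_{\op, \delta}$ almost everywhere. Since $\mg_N$ is locally bounded, $(\mg_k)$ is Cauchy in $\Lp[loc]{\infty}(\Sym \End \Tensors[2,0] \cM)$ and converges to a measurable limit $\mg$. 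Passing to the essential limit in $\mg_j \sim_{e^\epsilon} \mg_k$ yields $\mg_k \sim_{e^\epsilon} \mg$ for $k \geq N$, so $\mg$ inherits the local comparability condition from $\mg_N$, lies in $\Met(\cM)$, and satisfies $\rmet^\cM(\mg_k, \mg) \leq \epsilon$.

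The principal technicality is the measurable functional calculus underlying the geodesic construction: one must verify that $B^t$ is a well-defined measurable section of $\End \tanb \cM$, that the essential spectral bounds on $B$ transfer to $B^{t-s}$, and that the resulting $\mg_t$ genuinely lies in $\Met(\cM)$ rather than only being pointwise positive and bounded. The completeness step is more routine but relies essentially on a fixed smooth reference metric to convert the purely relative closeness of the $\mg_k$ into an absolute $\Lp[loc]{\infty}$ estimate from which measurability of the limit is immediate.
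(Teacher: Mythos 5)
Your proposal is correct, and in two of the three parts it takes a route that differs in a worthwhile way from the paper's. For the length property, the paper proves only a \emph{midpoint} property, constructing $\mh = \mg_{1/2}$ via $B^{1/2}$ and a pointwise functional-calculus lemma, and then invokes the standard fact that a complete metric space with midpoints is a length space; your argument instead exhibits the full curve $t \mapsto \mg_t$ with $\rmet^\cM(\mg_s,\mg_t) \leq d\,|t-s|$ and concludes directly that it is a geodesic. This buys a strictly stronger conclusion (the components are geodesic, not merely length spaces) and decouples the length property from completeness, at the cost of having to justify the measurable fractional powers $B^t$ uniformly in $t$ — which is exactly the same pointwise spectral argument the paper already uses for path-connectedness of $\Comp(\mg)$, so nothing essentially new is needed. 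For completeness, the paper works globally: it represents $\mg_m(u,v)=\mg_1(B_m u, B_m v)$ against the first term of the sequence, shows $\{B_n^2\}$ is Cauchy in $\Lp{\infty}(\cM;\Tensors[1,1]\cM,\mgt_1)$, and relies on two preparatory lemmas establishing that $\Lp{\infty}$ of a tensor bundle with respect to a \emph{rough} metric is Banach. Your chart-local argument with a smooth reference metric converts the relative closeness $\mg_j \sim_{e^\epsilon} \mg_k$ into an absolute $\Lp[loc]{\infty}$ Cauchy estimate and then recovers the global statement $\rmet^\cM(\mg_k,\mg)\leq\epsilon$ by passing to the a.e.\ limit in the two-sided form inequality; this avoids the Banach-space machinery for rough bundle metrics entirely and is arguably more elementary, though one should say explicitly that the local limits patch consistently on chart overlaps (uniqueness of $\Lp{\infty}$ limits gives this). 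The extended-metric axioms are handled identically to the paper, and you are in fact more careful than the paper in recording the separation axiom.
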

The fact that this is a complete metric space is proved in Proposition~\ref{Prop:Complete}.
The length space property is established in Corollary~\ref{Cor:Length}. 

Since smooth and continuous Riemannian metrics are also RRMs, we consider their closure with respect to $\rmet^\cM$, leading to the following theorem. 
\begin{thm}\label{Thm.limit.smooth.RRMs}
Let $\cM$ be a smooth connected manifold. Then $\overline{\Met_{\Ck{\infty}}(\cM)}^{\rmet^\cM} = \Met_{\Ck{0}}(\cM)$.
In particular, $\Met_{\Ck{0}}(\cM)$ is a closed and, hence,  a complete subset. 
\end{thm}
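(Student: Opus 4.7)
The plan is to establish both inclusions of the asserted equality. For $\overline{\Met_{\Ck{\infty}}(\cM)}^{\rmet^\cM} \subseteq \Met_{\Ck{0}}(\cM)$, take a sequence $\mg_n \in \Met_{\Ck{\infty}}(\cM)$ with $\rmet^\cM(\mg_n, \mg) \to 0$ and extract constants $C_n \to 1$ with $\mg_n \sim_{C_n} \mg$. In any chart $(U,\psi)$ with $\overline{U}$ compact, the quadratic-form comparability $C_n^{-2}\mg \leq \mg_n \leq C_n^{2} \mg$ together with $\mg \in \Lp[loc]{\infty}$ forces $\mg_n \to \mg$ in the essential supremum norm on $\overline{U}$ relative to any background smooth metric. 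Since each $\mg_n$ is continuous, the sequence is Cauchy in $\Ck{0}(\overline{U})$ and converges uniformly to a continuous limit which coincides with $\mg$ almost everywhere. Exhausting $\cM$ by such charts produces a globally continuous representative, so $\mg \in \Met_{\Ck{0}}(\cM)$.

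For the reverse inclusion I approximate a continuous $\mg$ by partition-of-unity mollification. Fix a locally finite countable cover $\{(U_i, \psi_i)\}_{i \in \Na}$ of $\cM$ by relatively compact charts and a subordinate partition of unity $\{\chi_i\}$ with $\spt \chi_i \Subset U_i$. In local coordinates $\psi_i$, the matrix field representing $\mg$ is continuous, hence uniformly continuous and uniformly elliptic on the compact set $\psi_i(\spt \chi_i)$. For each $n$ and $i$, choose $\epsilon_{i,n} > 0$ small enough that the standard mollification $\mg^{(n)}_i$ of these coordinate entries satisfies $\mg^{(n)}_i \sim_{\e^{1/n}} \mg$ on $\spt \chi_i$; this is possible because uniform continuity together with a uniform lower bound on the smallest eigenvalue of $\mg$ converts an additive $\Lp{\infty}$-estimate into a multiplicative one. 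Define
\[
\mg_n := \sum_{i} \chi_i\, \mg^{(n)}_i,
\]
where each summand is understood as the pullback of its mollified coordinate representation to $U_i$ and extended by zero outside. The sum is locally finite and, since convex combinations of symmetric positive-definite forms remain symmetric positive-definite, $\mg_n$ is smooth. Using $\sum_i \chi_i \equiv 1$, for almost every $x$ and every $u \in \tanb_x \cM$,
\[
|u|^{2}_{\mg_n(x)} = \sum_i \chi_i(x)\, |u|^{2}_{\mg^{(n)}_i(x)} \leq \e^{2/n} \sum_i \chi_i(x)\, |u|^{2}_{\mg(x)} = \e^{2/n}\, |u|^{2}_{\mg(x)},
\]
and symmetrically for the lower bound, so $\mg_n \sim_{\e^{1/n}} \mg$ globally and $\rmet^\cM(\mg_n, \mg) \leq 1/n \to 0$.

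The principal obstacle is obtaining multiplicative closeness \emph{globally} on a noncompact manifold, since a continuous RRM need only be uniformly continuous and uniformly elliptic on compact sets. The partition-of-unity reduction allows the mollification scale $\epsilon_{i,n}$ to be chosen independently for each $i$, and the argument closes because two-sided multiplicative bounds are preserved under convex combinations: if each $M_i$ satisfies $C^{-2} M \leq M_i \leq C^{2} M$ in the positive-semidefinite order then so does $\sum \lambda_i M_i$ for any $\lambda_i \geq 0$ with $\sum \lambda_i = 1$. Closedness and hence completeness of $\Met_{\Ck{0}}(\cM)$ then follow at once from Theorem~\ref{Thm:Complete}, as closed subsets of complete metric spaces are complete.
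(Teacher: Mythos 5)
Your proposal is correct, and while it shares the global architecture of the paper's proof (local $\Lp{\infty}$-control plus partition-of-unity gluing), both inclusions are executed by genuinely different local mechanisms. For $\overline{\Met_{\Ck{\infty}}(\cM)}^{\rmet^\cM} \subset \Met_{\Ck{0}}(\cM)$, the paper fixes a smooth member $\mg_1$ of the sequence, writes $\mg_n = \mg_1[B_n^2\,\cdot,\cdot]$ and $\mg = \mg_1[B^2\,\cdot,\cdot]$, and runs an eigenvalue/diagonalisation computation to show $B_n^2 \to B^2$ in $\Lp{\infty}$ before invoking Lemma~\ref{bundlemetriclem}; your direct observation that $C_n^{-2}\mg \leq \mg_n \leq C_n^2 \mg$ with $C_n \to 1$ already yields $\Lp{\infty}$-convergence of the coefficient matrices on compacta is more elementary and bypasses the diagonalisation entirely --- the only step worth writing out is the standard fact that a symmetric form $S$ with $-\alpha\,\mg \leq S \leq \beta\,\mg$ has $\mg$-relative operator norm at most $\max(\alpha,\beta)$, which converts the two-sided form bound into a norm bound. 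For the reverse inclusion, the paper's local approximants are frozen-coefficient metrics obtained by Gram--Schmidt at a point and constant extension of the resulting frame in a chart, whereas yours are coordinate mollifications; both are then glued by a partition of unity using the identical convexity fact that two-sided multiplicative bounds pass to convex combinations. Mollification is the more robust choice (it would give higher-order approximation statements for free) at the small cost of noting that the mollification radius fits inside each chart because $\spt\chi_i \Subset U_i$ and that uniform continuity plus a positive lower eigenvalue bound on $\psi_i(\spt\chi_i)$ upgrades the additive mollification error to a multiplicative one; the paper's construction avoids convolution but requires the explicit frame bookkeeping it carries out. The closedness and completeness conclusion then follows exactly as you state.
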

These are obtained in Section~\ref{S: SmoothRRM}.
The fact that $\Met_{\Ck{0}}(\cM)$ is complete follows directly from its closedness and the completeness of  $(\Met(\cM),\rmet^\cM)$.

Since $\rmet^\cM$ is an extended metric, it is useful to consider  subsets of metrics which sit at a finite distance to each other. 
We define this to be 
\[ 
\Comp(\mg) := \set{\mh \in \Met(\cM): \rmet^\cM(\mg,\mh) < \infty}
\]
for each $\mg \in \Met(\cM)$. 

Let $[\mg]$ be the equivalence classes of $\Met(\cM)$ with respect to the relation $\mg \sim \mh$, by which we mean there is some $C \geq 1$ such that $\mg \sim_{C} \mh$. 
It is easy to see that $[\mg] = \Comp(\mg)$. 
As the notation suggests, these equivalence classes have topological relevance as asserted through the following theorem on path-connectedness.
\begin{thm}\label{Thm:CompConnected}
Let $\cM$ be a smooth connected manifold and $\mg \in \Met(\cM)$. Then the component $\Comp(\mg)$ is path-connected.
\end{thm}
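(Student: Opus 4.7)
The plan is to produce an explicit continuous path from $\mg$ to any $\mh \in \Comp(\mg)$, which yields path-connectedness with $\mg$ serving as a base point. Since $\rmet^\cM(\mg,\mh) < \infty$, I fix $C \geq 1$ with $\mg \sim_C \mh$ and use the \emph{convex combination}
\[ \mg_t := (1-t)\mg + t\mh, \qquad t \in [0,1], \]
which interpolates affinely between $\mg_0 = \mg$ and $\mg_1 = \mh$ as measurable sections of $\Sym\End\Tensors[2,0]\cM$.

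First, I would verify that $\mg_t \in \Met(\cM)$ for each $t \in [0,1]$. The identity $|u|^2_{\mg_t} = (1-t)|u|^2_{\mg} + t|u|^2_{\mh}$ immediately shows that $\mg_t$ is symmetric and non-negative, and in any chart $(U,\psi)$ in which $\mg \sim_{C_U^\mg} \psi^*\delta$ and $\mh \sim_{C_U^\mh} \psi^*\delta$, the bounds
\[ \min\!\bigl((C_U^\mg)^{-2}, (C_U^\mh)^{-2}\bigr)|u|^2_{\psi^*\delta} \leq |u|^2_{\mg_t} \leq \max\!\bigl((C_U^\mg)^2, (C_U^\mh)^2\bigr)|u|^2_{\psi^*\delta} \]
hold almost everywhere and uniformly in $t$, verifying the local comparability condition of Definition~\ref{Def:RRM}. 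The same reasoning, now applied with $\mg$ in place of $\psi^*\delta$, gives $\mg_t \sim_{C} \mg$, so the entire path remains inside $\Comp(\mg)$.

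The main point is continuity of $t \mapsto \mg_t$ with respect to $\rmet^\cM$. Writing $r := |u|^2_{\mh}/|u|^2_{\mg} \in [C^{-2}, C^2]$ pointwise (whenever $|u|_\mg > 0$), I would consider the ratio
\[ f(r) := \frac{|u|^2_{\mg_t}}{|u|^2_{\mg_s}} = \frac{(1-t) + tr}{(1-s) + sr}. \]
Since $f'(r) = (t-s)/((1-s)+sr)^2$ has constant sign, $f$ is monotone in $r$, so its extremes over $[C^{-2}, C^2]$ occur at the endpoints. Using that $(1-s)+sr \geq C^{-2}$ for $r \in [C^{-2}, C^2]$, a direct computation yields the uniform bound
\[ |f(C^{\pm 2}) - 1| \leq |t-s|(C^2 - 1). \]
Taking the supremum over admissible $r$ and over $x \in \cM$ gives $\rmet^\cM(\mg_s, \mg_t) \leq \tfrac{1}{2}\log\bigl(1 - |t-s|(C^2 - 1)\bigr)^{-1}$ whenever $|t-s|(C^2-1) < 1$, which tends to zero as $s \to t$, so the path is continuous (and in fact locally Lipschitz).

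I do not anticipate any serious obstacle: the whole argument reduces to pointwise algebraic estimates whose uniformity is delivered by the single closeness constant $C$. The only mild subtlety is that $\rmet^\cM$ is defined through $|u|_\cdot$ rather than $|u|^2_\cdot$, which merely contributes a factor of $\tfrac{1}{2}$ to the final logarithm, and that $f$ is undefined on the measure-zero set $\{|u|_\mg = 0\}$, which is harmless as the comparability bounds extend by continuity (or equivalently by working directly with the bilinear forms).
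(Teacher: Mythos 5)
Your proof is correct, but it takes a genuinely different route from the paper. The paper invokes Theorem~\ref{Thm:Stab} to write $\mh[u,v]=\mg[Bu,Bv]$ for a $\mg$-self-adjoint $B\in\Ell(\cM)$ and then joins $\mg$ to $\mh$ by the fractional-power path $\mg_t[u,u]=\mg[B^tu,B^tu]$, with continuity coming from spectral bounds on $B^{s-t}$ via a functional-calculus lemma (Lemma~\ref{fun.cal.lem}). You instead use the affine interpolation $\mg_t=(1-t)\mg+t\mh$, and your estimates check out: the identity $|u|^2_{\mg_t}=(1-t)|u|^2_{\mg}+t|u|^2_{\mh}$ gives membership in $\Met(\cM)$ and $\mg_t\sim_C\mg$, and the ratio computation $f(r)-1=\frac{(t-s)(r-1)}{(1-s)+sr}$ with $r\in[C^{-2},C^2]$ does yield $|f(r)-1|\leq|t-s|(C^2-1)$ uniformly, hence local Lipschitz continuity of the path in $\rmet^\cM$. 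Your argument is more elementary — it needs no representation by $\Ell(\cM)$ and no spectral theory, only pointwise algebra controlled by the single constant $C$. What the paper's heavier construction buys is that the \emph{same} path immediately produces midpoints ($\mh:=\mg_{1/2}$ satisfies $\rmet^{\cM}(\mg_0,\mg_{1/2})=\rmet^{\cM}(\mg_{1/2},\mg_1)=\tfrac12\rmet^{\cM}(\mg_0,\mg_1)$), which is how the authors upgrade path-connectedness to the length-space property in Corollary~\ref{Cor:Length}; the affine path does not have constant speed and would not serve that later purpose. For the statement of Theorem~\ref{Thm:CompConnected} alone, your proof is complete and simpler.
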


As we show in Subsection~\ref{S:Connectedness}, the space $(\Met(\cM), \rmet^\cM)$ is generally disconnected.
However, as the following theorem shows, compactness of $\cM$ is intimately related to the connectedness of $\Met(\cM)$. 

\begin{thm}\label{Thm_com_iff_conn}
A smooth connected manifold $\cM$ is a compact if and only if the space $(\Met(\cM), \rmet^\cM)$ is connected. 
\end{thm}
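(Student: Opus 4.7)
The plan is to handle the two directions separately, invoking the path-connectedness of individual components (Theorem~\ref{Thm:CompConnected}) on the easy side and constructing an explicit RRM in a separate component for the converse.

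For the forward direction, assume $\cM$ is compact and fix any smooth Riemannian metric $\mg_0 \in \Met(\cM)$. We will show that $\Met(\cM) = \Comp(\mg_0)$, which combined with Theorem~\ref{Thm:CompConnected} immediately yields (path-)connectedness of $(\Met(\cM), \rmet^\cM)$. Given an arbitrary $\mg \in \Met(\cM)$, the equivalent characterisation from Lemma~\ref{Lem:RoughMetEquiv} places both $\mg$ and $\mg^{-1}$ in $\Lp[loc]{\infty}$, so their pointwise norms with respect to $\mg_0$ are locally essentially bounded. Compactness of $\cM$ promotes these local bounds to a single global constant $C \geq 1$ satisfying
\[
C^{-1}|u|_{\mg_0(x)} \leq |u|_{\mg(x)} \leq C|u|_{\mg_0(x)}
\]
for every $u \in \tanb_x\cM$ and almost every $x \in \cM$; hence $\rmet^\cM(\mg_0, \mg) \leq \log C < \infty$.

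For the converse, suppose $\cM$ is not compact. We first note that every component $\Comp(\mg)$ is clopen in $(\Met(\cM), \rmet^\cM)$: the triangle inequality shows that the open $\rmet^\cM$-ball of radius one around any of its points sits inside $\Comp(\mg)$, and its complement is a union of the remaining components, each open by the same argument. Therefore, disconnectedness reduces to exhibiting two metrics at infinite $\rmet^\cM$-distance. To this end, we use non-compactness to extract a sequence $(x_n) \subset \cM$ having no convergent subsequence in $\cM$, and select pairwise disjoint coordinate balls $U_n \ni x_n$ forming a locally finite family. Fixing a smooth reference $\mg_0 \in \Met(\cM)$, we define a measurable $\lambda : \cM \to [1, \infty)$ by $\lambda \equiv n$ on $U_n$ and $\lambda \equiv 1$ on $\cM \setminus \bigcup_n U_n$, and set $\mh := \lambda\,\mg_0$.

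It then remains to check that $\mh \in \Met(\cM)$ and that $\rmet^\cM(\mg_0,\mh) = +\infty$. For the first point, local finiteness of $\{U_n\}$ guarantees that $\lambda$ attains only finitely many positive values on any sufficiently small chart around any given point, so $\lambda, \lambda^{-1} \in \Lp[loc]{\infty}$; since $\mg_0$ is smooth, this forces $\mh, \mh^{-1} \in \Lp[loc]{\infty}$ and Lemma~\ref{Lem:RoughMetEquiv} then yields $\mh \in \Met(\cM)$. For the second, any $C \geq 1$ with $\mg_0 \sim_C \mh$ would force $\sqrt{n} \leq C$ on each $U_n$, which fails for $n$ large. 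The key obstacle is reconciling arbitrarily large conformal factors along an escaping sequence with the local $\Lp{\infty}$ bounds demanded by the RRM definition; this is precisely where the local finiteness of $\{U_n\}$, a feature made possible only by the failure of compactness of $\cM$, becomes indispensable.
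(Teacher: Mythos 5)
Your proof is correct and follows essentially the same strategy as the paper's: compactness upgrades the local comparability bounds of Lemma~\ref{Lem:RoughMetEquiv} to a global constant in the forward direction, while the converse proceeds by a conformal rescaling of a fixed metric by an unbounded but locally essentially bounded function together with the clopen-ness of components (the paper builds its function via a Lindel\"of subcover and disjointification rather than your escaping sequence, but the mechanism is identical). The only step you assert without proof is the existence of a pairwise disjoint, locally finite family of coordinate balls around a sequence with no convergent subsequence; this is standard (such a sequence is a closed discrete set in a metrizable space, and balls of radius $\tfrac14 d(x_n, \{x_m\}_{m\neq n})$ do the job), so it is not a genuine gap.
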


The proofs of both these theorems can be found in Section~\ref{S:Connectedness}.

Lastly, we outline an important representational tool for further study of these metrics. 
Define
\begin{equation} 
\label{Eq:Ell} 
\Ell(\cM) := \set{B \in \Lp[loc]{\infty}(\cM;\End(\tanb\cM)): B^{-1} \in \Lp[loc]{\infty}(\cM;\End(\tanb\cM))}.
\end{equation}
This set is clearly not a vector subspace. 
It is,  however a group under composition as shown in Proposition~\ref{Prop:EllIsAGroup}.
This set, in a sense, mirrors the general-linear group $\mathrm{GL}(V)$ over a vector space $V$, although here, it is in infinite dimensions. 
For a given $\mg \in \Met(\cM)$, define 
\[
\mg_{B}(x) [u,v] := \mg(x) [B(x)u,B(x)v].
\]
With this, we have the following theorem.

\begin{thm}
\label{Thm:Stab}
Let $\cM$ be a smooth connected manifold.
Then:
\begin{enumerate}[label=(\roman*)]
\item 
\label{Thm:Stab:i}
for $B \in \Ell(\cM)$ and $\mg \in \Met(\cM)$, we have that $\mg_B \in \Met(\cM)$; 
\item 
\label{Thm:Stab:ii} 
the $\action: \Ell(\cM) \times \Met(\cM) \to \Met(\cM)$ given by $(B,\mg) \mapsto \action(B)\mg = \mg_B$ is a group action of $\Ell$ on $\Met(\cM)$;  and
\item 
\label{Thm:Stab:iii}
given any $\mg,\mh \in \Met(\cM)$, there exists  $B \in \Ell(\cM)$ such that $\mg = \action(B)\mh$.
\end{enumerate}
\end{thm}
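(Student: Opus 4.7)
My strategy is to handle (i) and (ii) by direct calculation and then construct the element in (iii) as the fibrewise self-adjoint positive square root of $\mh^{-1}\mg$, with the main technical work being the measurability and local boundedness of this square root.

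For (i), I would verify that $\mg_B$ is symmetric, almost-everywhere positive definite, and satisfies the local comparability condition of Definition~\ref{Def:RRM}. Symmetry is inherited from $\mg$; positive definiteness at almost every $x$ follows because $B^{-1} \in \Lp[loc]{\infty}$ forces $B(x)$ to be invertible, giving $\mg_B(x)[u,u] = |B(x)u|_{\mg(x)}^2 > 0$ whenever $u \neq 0$; and local comparability is obtained by combining the local comparability of $\mg$ with the $\Lp{\infty}$-bounds on $B$ and $B^{-1}$ over a precompact subset of a chart. For (ii), direct computation yields $\action(\Id)\mg = \mg$ and
\[
\action(B_2)\bigl(\action(B_1)\mg\bigr)(x)[u,v] = \mg(x)[B_1 B_2 u,\, B_1 B_2 v] = \action(B_1 B_2)\mg(x)[u,v],
\]
exhibiting $\action$ as a (right) action, with part (i) ensuring that the target lies in $\Met(\cM)$.

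For (iii), given $\mg,\mh\in\Met(\cM)$, I would introduce the endomorphism $A := \mh^{-1}\mg$ characterised fibrewise by $\mh(x)[A(x)u,v] = \mg(x)[u,v]$. Because $\mh, \mh^{-1}, \mg, \mg^{-1}$ all belong to $\Lp[loc]{\infty}$, so do $A$ and $A^{-1}$, and each $A(x)$ is $\mh(x)$-self-adjoint and positive. Setting $B$ to be the $\mh$-self-adjoint positive square root of $A$, one then computes
\[
\mh_B(x)[u,v] = \mh(x)[B^2 u, v] = \mh(x)[Au,v] = \mg(x)[u,v],
\]
so $\mg = \action(B)\mh$.

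The main obstacle is showing $B \in \Ell(\cM)$, and my plan is to work locally. In a chart $U$, I would apply Gram--Schmidt to a smooth local frame using $\mh$ to produce a measurable $\mh$-orthonormal frame on $U$. In this frame $\mh$ is represented by the identity and $\mg$ by a measurable symmetric positive definite matrix $\tilde G$ with $\tilde G, \tilde G^{-1} \in \Lp[loc]{\infty}$, so that the intrinsic endomorphism $B$ is locally the standard matrix square root $\tilde G^{1/2}$. Measurability follows from continuity of $M\mapsto M^{1/2}$ on the open cone of positive definite matrices, and the local $\Lp{\infty}$-bounds for $B$ and $B^{-1}$ follow from the spectral-radius bounds $\|\tilde G^{1/2}\|_{\op} \leq \|\tilde G\|_{\op}^{1/2}$ and $\|\tilde G^{-1/2}\|_{\op} \leq \|\tilde G^{-1}\|_{\op}^{1/2}$. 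Since $B$ is defined intrinsically rather than via a choice of frame, no chart-compatibility verification is required.
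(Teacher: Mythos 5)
Your proposal is correct and follows essentially the same route as the paper: (i) and (ii) by direct verification of local comparability using the local $\Lp{\infty}$-bounds on $B$ and $B^{-1}$, and (iii) by taking the pointwise $\mh$-self-adjoint positive square root of the endomorphism relating the two inner products, with measurability and local bounds checked in a local frame. Your treatment is in fact slightly more careful than the paper's on two points it leaves implicit — the measurability of the square root via continuity of $M \mapsto M^{1/2}$ on the positive-definite cone, and the observation that $\action$ is a \emph{right} action since $\action(B_2)\circ\action(B_1) = \action(B_1B_2)$.
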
 
This shows that not only $\Met(\cM)$ is stable under action by elements of $\Ell(\cM)$, but $\Met(\cM)$ can be represented by elements of $\Ell(\cM)$.
As this representation depends on a given metric $\mg$, the coefficients $B$ can be thought of as a global coordinate system with respect to $\mg$.
The $B \in \Ell(\cM)$ which changes $\mg$ to $\mh$ in \ref{Thm:Stab:iii} can be chosen uniquely as a pointwise almost-everywhere $\mh$-self-adjoint transformation.

\section{Applications, consequences and examples}
\label{S:Examples}

\subsection{Rough Riemannian metrics through rescalings and stability}
\label{S:Stab}
Rough metrics can arise on a smooth connected manifold $\cM$ in a variety of natural ways.
If $\mg \in \Met_{\Ck{\infty}}(\cM)$, and $f \in \Lp{\infty}(\cM)$ and $f^{-1} \in \Lp{\infty}(\cM)$, then clearly, $f \mg \in \Met(\cM)$.
In this case, we also have that $f\mg \in \Comp(\mg)$.

This is a special case of the situation where we can consider $B \in \Lp{\infty}(\cM; \End(\tanb\cM),\mg)$ with $B^{-1} \in \Lp{\infty}(\cM;\End(\tanb\cM),\mg)$.
Then, 
\[
\mg_{B}(x)[u,v] = \mg(x)[B(x)u, B(x)v]
\] 
yields $\mg_{B} \in \Comp(\mg) \subset \Met(\cM)$.
The earlier example of conformal rescaling by $f$ is then a special case of $B = \sqrt{f} \ident$.
See Theorem~\ref{Thm:Stab} for a comprehensive description.

\subsection{Pullbacks by local lipeomorphisms and severity of singularities}
\label{S:Pullback}
Let $\cM$ and $\cN$ be smooth connected manifolds and let $\mh \in \Met_{\Ck{\infty}}(\cN)$. 
Further, suppose that $F: \cM \to \cN$ is a lipeomorphism.
Then,
\[
\mg_F (x)[u,v] := (F^\ast \mh)(x)[u,v] = \mh(F(x))[ (\extd F)(x)u, (\extd F)(x) v]
\]
satisfies $\mg_F \in \Met(\cM)$.

Suppose now that $\cM$ has a smooth metric $\mg$ and $\cM'$ is another smooth connected manifold with smooth metric $\mg'$. 
Let $f \in  \Ck{0,1}(\cM;\cM')$ consider $\graph f = \set{(x, f(x)) \in \cM \times \cM')}$.
In this case, $F: \cM \to \graph f$ given by $x \mapsto (x, f(x))$ is a lipeomorphism and we can set $\cN := \graph f$.
In particular, the map $F:\cM \to \cN$ is a homeomorphism so $\cN$ has a smooth structure.
Applying our earlier construction with $\mh = \mg \oplus \mg'$, we are able to pullback the induced geometry of $\graph f$ as an embedded submanifold in $(\cM \times \cM', \mg \oplus \mg')$ isometrically to $(\cM,\mg_F)$.

In the special case of $\cM = \R^2$ and $\cN = \R$, given any set $Z \subset \R^2$ with $\Leb(Z) = 0$, there exists a $f_Z \in \Ck{0,1}(\R^2)$ such that 
\[ 
Z \subset \Sing(f_Z) = \set{x \in \R^2: f_Z \emph{ not differentiable at } x}.
\]
In particular, we can take $Z = \Q \times \Q$, and the resulting rough Riemannian metric, $\mg_{f_Z}$ will be singular on a dense subset of of $\R^2$.
This illustrates the severity of singularities of rough Riemannian metrics.

\subsection{Rough Riemannian metrics and the geometry of divergence form operators}
\label{S:DivForm} 
Let $\cM = \R^n$  with $A \in \Lp{\infty}(\R^n; \R^n)$ real-symmetric.
Moreover, suppose that $A^{-1}$ exists and $A^{-1} \in \Lp{\infty}(\R^n;\R^n)$.
Consider the following energy
\[ 
\cE_{A}[u,v] = \int_{\R^n} A(x)\nabla u(x)\cdot \conj{\nabla v(x)}\ d\Leb(x), 
\]
with domain $\dom(\cE_{A}) = \SobH{1}(\R^n)$. 

From Lax-Milgram theory (see Chapter~IV~Section~2 in \cite{Kato}), we have that there exists self-adjoint $L_{A}: \dom(L_{A}) \subset \SobH{1}(\R^n) \to \Lp{2}(\R^n)$ with $\sqrt{L_A} = \SobH{1}(\R^n)$ such that $\cE_{A}[u,v] = \inprod{ \sqrt{L_A}u, \sqrt{L_A}v}_{\Lp{2}(\R^n)}$.
By definition of $\cE_{A}$, it is easy to see that $L_{A} = \nabla^{\ast} A \nabla = -\divv A \nabla$, a divergence-form operator with measurable coefficients.

This is closely related to a geometry associated to the coefficients $A$.
Fix $B, B^{-1} \in \Lp{\infty}(\R^n; \R^n)$ real-symmetric and define the following inner-product
\[
\mg(x)[u,v] = B(x)u\cdot v.
\]
We want the Laplacian of $\mg$ to be related to $L_A$ up to multiplication by functions.

To that end, let us compute the Laplacian of $\mg$ with respect to the Euclidean inner product.
First, note $d\mu_{\mg} = \sqrt{\det B}\ d\Leb$. 
Then, for $u \in \dom(\Lap_{\mg})$ and $v \in \Ck[c]{\infty}(\R^n)$.
\begin{align*} 
\inprod{\Lap_{\mg}u,v}_{\Lp{2}(\mg)} 
&= 
\inprod{\nabla u, \nabla v}_{\Lp{2}(\mg)} \\ 
&=
 \int_{\R^n} \mg(x)[\nabla u(x), \nabla v(x) ]\ d\mu_{\mg}(x) \\
 &=
 \int_{\R^n} \mg(x)[\nabla u(x), \nabla v(x) ]\ d\mu_{\mg}(x) \\
&= 
\int_{\R^n} B(x) \nabla u(x) \cdot \conj{\nabla v(x)}\ \sqrt{\det B(x)}\ d\Leb(x) \\
&=
\int_{\R^n} -\divv ((\det B(x))^{\frac12} B(x) \nabla u)(x)\  \conj{v(x)}\ \ d\Leb(x) \\
&= 
\int_{\R^n} -(\det B(x))^{-\frac12} \divv( (\det B(x))^{\frac12} B(x) \nabla u)(x)\  \conj{v(x)}\ \ d\mu_{\mg}(x).
\end{align*}
That is, $\Lap_{\mg} = -(\det B(x))^{-\frac12} \divv (\det B)^{\frac12} B \nabla$. 

We want $\Lap_{\mg}= -(\det B(x))^{-\frac12} \divv A \nabla$.
As an ansatz, set $B = f A$, for $f, f^{-1} \in \Lp{\infty}(\R^n)$.
Then,  $\det B = f^n \det A$ so that $(\det B)^{\frac12} = f^{\frac n2} (\det A)^{\frac12}$.
We solve for $f$ by requiring 
\[ 
A = (\det B)^{\frac12} B =  f^{\frac n2} (\det A)^{\frac12} f A = f^{\frac{n+2}{2}} (\det A)^{\frac12} A, 
\]
which implies $1 = f^{\frac{n+2}{2}} (\det A)^{\frac12}$.
That is, 
\[ 
f = (\det A)^{-\frac{1}{n+2}}.
\]
With this choice of $f$, we have 
\[ 
\det B = (\det A)^{-\frac{n}{n+2}} \det A = (\det A)^{\frac{2}{n+2}}.
\]
Therefore 
\[
\Lap_{\mg} = -(\det A)^{-\frac{1}{n+2}}  \divv A \nabla, 
\]
which is the operator $\divv A\nabla$ up to multiplication by an $\Lp{\infty}$ function.

This can be repeated on a manifold.
Suppose now that $\cM$ is an $n$-dimensional smooth connected manifold with $\mg \in \Met(\cM)$. 
If  $L_{\mg,A} = -\divv_{\mg} A \nabla$ with $A, A^{-1} \in \Lp{\infty}(\End \cotanb \cM)$ and $A^{\ast,\mg} = A$ almost-everywhere, define 
\[
\mh(x)[u,v] :=  \mg(x)[ (\det A(x)^{-\frac{1}{n+2}} A(x) u, v].
\]
Clearly, $\mh \in \Met(\cM)$.
Furthermore, since $A, A^{-1} \in \Lp{\infty}(\Sym\End \cotanb \cM)$, we have that $\rmet^\cM(\mg,\mh) < \infty$, or equivalently, $\mh \in \Comp(\mg)$.
Mimicking our previous calculation in $\R^n$, we find that
\[
\Lap_{\mh} = -(\det A)^{-\frac{1}{n+2}}  \divv_{\mg} A \nabla = -(\det A)^{-\frac{1}{n+2}}  L_{\mg,A}
\]
Again, we see that  $L_{\mg,A}$ is the Laplacian of $\mh \in \Comp(\mg)$ up to multiplication by uniformly lower and upper bounded measurable functions.
Furthermore, if $\det A = 1$, then we can see that $\Lap_{\mh} = L_{\mg,A}$.

\subsection{A counterexample to smooth approximation}
\label{S:NonApprox}
Theorem~\ref{Thm.limit.smooth.RRMs} shows that the closure of smooth metrics give us continuous metrics.

We provide a concrete example to demonstrate why general RRMs cannot be smoothly approximated.
Let $\cM = \R^n$ and define $f: \R^n \to [1,\infty)$ such that
\[
f(x) 
:= 
\begin{cases}
10^{43}& \text{ when } \modulus{x} \geq 1,\\
1 & \text{ when } \modulus{x}< 1,\\
\end{cases}.
\]
Define  $\mg(p) := f(p) \delta_{\R^n}$ where $\delta_{\R^n}$ is the Euclidean metric.
From the discussion in Subsection~\ref{S:Stab}, it is clear that $\mg \in \Met(\R^n)$ and, in fact, $\mg \in C(\delta_{\R^n})$.

We show that $\mg$ cannot be approximated smoothly. 
Note that $\mg_n = B_n \delta_{\R^n}$ with $B_n, B_n^{-1} \in \Lp{\infty}(\R^n; \R^n)$.
Therefore, if a sequence $\mg_n \in \Met_{\Ck{\infty}}(\R^n)$ such that $\mg_n \to \mg$, then there exists $\epsilon_0 > 0$ such that for $ \epsilon \in (0, \epsilon_0)$, there exists $N(\epsilon) > 0$ such that $n \geq N(\epsilon)$ yields $\norm{B_n - f\Id}_{\Lp{\infty}(\R^n)} < \epsilon$.
See the proof of Theorem~\ref{Thm.limit.smooth.RRMs} to obtain this estimate from $\mg_n \to \mg$. 
Exploiting the equivalence between the Frobenius and operator norms and invoking Lemma~\ref{bundlemetriclem} with $\cE = \R^n \tensor \R^n$ and $\mgt = \delta_{\R^{n}} \tensor \delta_{\R^n}$, we deduce that $f \Id \in \Ck{0}(\R^n;\R^n)$.
That is, $f \in \Ck{0}(\R^n)$, which is a contradiction. 

\subsection{Preservation of spaces on components} 
\label{S:Preserve} 

It was already established in \cite{BRough}, although not in the notation of this paper, that important spaces, as listed below, remain fixed (as sets) with equivalent norms across components $\Comp(\mg)$.
That is, for all $\mh \in \Comp(\mg)$:
\begin{enumerate}[label=(\roman*)] 
\item  For  $x$-almost-everywhere and  for all $u \in \tanb_x\cM$, 
\[
\e^{-\rmet^\cM(\mg,\mh)} \modulus{u}_{\mg(x)} \leq \modulus{u}_{\mh(x)} \leq \e^{\rmet^\cM(\mg,\mh)} \modulus{u}_{\mg(x)}.
\]
\item For all $x, y \in \cM$, 
\[
\e^{-\rmet^\cM(\mg,\mh)} \met_{\mg}(x,y) \leq \met_{\mh}(x,y) \leq \e^{\rmet^\cM(\mg,\mh)} \met_{\mg}(x,y).
\]
\item For $p \in [1, \infty)$, $\Lp{p}(\cM;\Tensors[r,s]\cM,\mg) = \Lp{p}(\cM;\Tensors[r,s]\cM,\mg)$ as sets and  
\begin{align*}
\e^{-(r+s+\frac{n}{2p} \rmet^\cM(\mg,\mh))} \norm{u}_{\Lp{p}(\Tensors[r,s]\cM),\mg)}
&\leq
\norm{u}_{\Lp{p}(\cM;\Tensors[r,s]\cM),\mh)}\\
&\qquad\leq
\e^{(r+s+\frac{n}{2p} \rmet^\cM(\mg,\mh))} \norm{u}_{\Lp{p}(\cM;\Tensors[r,s]\cM),\mg)}.
\end{align*}
\item For $p \in [1,\infty)$,  $\Sob{1,p}(\cM,\mg) = \Sob{1,p}(\cM,\mh)$ as sets and
\[
\e^{-(1+\frac{n}{2p} \rmet^\cM(\mg,\mh))} \norm{u}_{\Sob{1,p}(\cM,\mh)}.
\leq
\norm{u}_{\Sob{1,p}(\cM,\mg)}
\leq
\e^{(1+\frac{n}{2p} \rmet^\cM(\mg,\mh))} \norm{u}_{\Sob{1,p}(\cM,\mh)}.
\]
\end{enumerate}

\subsection{Preservation of Poincaré inequalities}
\label{S:Poincare}

Let $\mh \in \Met(\cM)$.
For a measurable subset $X \subset \cM$ with $\mu_{\mh}(X) < \infty$, the average $\Av_{\mh, X}:\Lp{0}(X) \to [0,\infty]$ is defined by 
\[ 
\Av_{\mh,X}u  = \fint_{X} u\ d\mu_{\mh} = \frac{1}{\mu_{\mh}(X)} \int_{X} u\ d\mu_{\mh}. 
\]

From \eqref{eq:d_g}, we have a well-defined distance $\met_{\mh}: \cM \times \cM \to \R$ associated to $\mh$. 
Let $\Ball_{\mh}(x,r)$ be the $\met_{\mh}$-ball centred at $x \in \cM$ of radius $r > 0$. 
We assume the that $\mu_{\mh}(\Ball_{\mh}(x,r)) < \infty$ for all $x \in \cM$ and $r > 0$ and say that $\mh$ satisfies a $(p,q)$-\emph{generalised local Poincaré inequality} at $x \in \cM$ for $p, q \in [1,\infty]$ if there exist constants $\eta(x,\mh), C_1(x,r,\mh) \in (0, \infty)$ and $C_2(x,r,\mh) \in [0,\infty)$ such that for all $r < \infty$ and $u \in \Ck{\infty}(\cM)$, 
\begin{equation} 
\label{Def:Poin}
\begin{aligned}
&\norm{{u - \Av_{\mh, \Ball_{\mh}(x,r)} u}}_{\Lp{p}(\Ball_{\mh}(x,r), \mh)} \\
&\qquad\qquad\leq C_1(x,r,\mh) \norm{{\nabla u}}_{\Lp{q}(\Ball_{\mh}(x,\eta(x,\mh) r),\mh)} + C_2(x,r,\mh) \norm{ {u}}_{\Lp{q}(\Ball_{\mh}(x, \eta(x,\mh)),\mh)}.
\end{aligned}
\end{equation}
The case of $C_2(x,r,\mh) = 0$ corresponds to a homogeneous Poincaré inequality.

Let $\mg \in \Comp(\mh)$ or equivalently $\rmet^\cM(\mg,\mh) < \infty$.
We show that $\mg$ satisfies a $(p,q)$-generalised local Poincaré inequality at $x \in \cM$ if $\mh$ satisfies a $(p,q)$-generalised local Poincaré inequality at $x \in \cM$.
The new constants depend on $C_1(x,r,\mh), C_2(x,r,\mh), \eta(x,\mh)$ and $\rmet^\cM(\mg,\mh)$.

Set $\theta > 1$ to be chosen later  and  note that 
\begin{equation} 
\label{Eq:Poin1} 
\begin{aligned}
\norm{{u - \Av_{\mg, \Ball_{\mg}(x,r)} u}}_{\Lp{p}(\Ball_{\mg}(x,r), \mg)}
&\leq
\norm{{u - \Av_{\mh, \Ball_{\mh}(x,\theta r)} u}}_{\Lp{p}(\Ball_{\mg}(x,r), \mg)} \\
&\qquad + 
\norm{{\Av_{\mh,\Ball_{\mh}(x,\theta r)}u - \Av_{\mg, \Ball_{\mg}(x,r)} u}}_{\Lp{p}(\Ball_{\mg}(x,r), \mg)} 
\end{aligned}
\end{equation}
We consider the second quantity, noting that the average produces a constant: 
\begin{align*} 
&\norm{{\Av_{\mh,\Ball_{\mh}(x,\theta r)}u - \Av_{\mg, \Ball_{\mg}(x,r)} u}}_{\Lp{p}(\Ball_{\mg}(x,r), \mg)}  \\
&\qquad= \cbrac{\int_{\Ball_{\mg}(x,r)} \modulus{\Av_{\mh,\Ball_{\mh}(x,\theta r)}u - \Av_{\mg, \Ball_{\mg}(x,r)} u}^p\ d\mu_{\mg}}^{\frac1p} \\
&\qquad= \modulus{\Av_{\mh,\Ball_{\mh}(x,\theta r)}u - \Av_{\mg, \Ball_{\mg}(x,r)} u} \mu_{\mg}(\Ball_{\mg}(x,r))^{\frac1p} \\ 
&\qquad= \modulus{ \Av_{\mg, \Ball_{\mg}(x,r)}(u -  \Av_{\mh,\Ball_{\mh}(x,\theta r)}u)} \mu_{\mg}(\Ball_{\mg}(x,r))^{\frac1p} \\ 
&\qquad= \mu_{\mg}(\Ball_{\mg}(x,r))^{-1} \modulus{ \int_{\Ball_{\mg}(x,r)} (u - \Av_{\mh,\Ball_{\mh}(x,\theta r)}u)\cdot 1} \mu_{\mg}(\Ball_{\mg}(x,r))^{\frac1p} \\
&\qquad\leq \mu_{\mg}(\Ball_{\mg}(x,r))^{\frac1p-1} \norm{u - \Av_{\mh,\Ball_{\mh}(x,\theta r)}u}_{\Lp{p}(\Ball_{\mg}(x,r),\mg)} \cbrac{\int_{\Ball_{\mg}(x,r)} 1^{p'}}^{\frac1p'} \\
&\qquad= \mu_{\mg}(\Ball_{\mg}(x,r))^{\frac1p-1} \norm{u - \Av_{\mh,\Ball_{\mh}(x,\theta r)}u}_{\Lp{p}(\Ball_{\mg}(x,r),\mg)} \mu_{\mg} (\Ball_{\mg}(x,r))^{\frac1p'} \\
&\qquad=  \mu_{\mg}(\Ball_{\mg}(x,r))^{\frac1p + \frac1p' -1}\norm{u - \Av_{\mh,\Ball_{\mh}(x,\theta r)}u}_{\Lp{p}(\Ball_{\mg}(x,r),\mg)} \\
&\qquad=  \norm{u - \Av_{\mh,\Ball_{\mh}(x,\theta r)}u}_{\Lp{p}(\Ball_{\mg}(x,r),\mg)},
\end{align*}
since $\frac1p + \frac1p' = 1$ in the invocation of the Cauchy-Schwartz inequality.
On noting that $\Ball_{\mg}(x,r) \subset \Ball_{\mh}(x, \e^{\rmet^\cM(\mg,\mh)})$, we choose $\theta := \e^{d^\cM(\mg,\mh)}$ and upon making that choice, combining this with \eqref{Eq:Poin1}, 
\begin{align*} 
\norm{{u - \Av_{\mg, \Ball_{\mg}(x,r)} u}}_{\Lp{p}(\Ball_{\mg}(x,r), \mg)}
&\leq 2  \norm{{u - \Av_{\mh, \Ball_{\mh}(x,\theta r)} u}}_{\Lp{p}(\Ball_{\mg}(x,r), \mg)} \\
&\leq 2  \theta^{\frac{n}{2p}} \norm{{u - \Av_{\mh, \Ball_{\mh}(x,\theta r)} u}}_{\Lp{p}(\Ball_{\mh}(x,\theta r), \mh)} \\
&\leq 2 C_1(x,r,\mh) \theta^{\frac{n}{2p}} \norm{\nabla u}_{\Lp{q}(\Ball_{\mh}(x,\eta(x,\mh) \theta r), \mh)} \\
&\qquad+ 2 C_2(x,r,\mh) \theta^{\frac{n}{2p}} \norm{u}_{\Lp{q}(\Ball_{\mh}(x,\eta(x,\mh) \theta r), \mh)} \\
&\leq 2 C_1(x,r,\mh) \theta^{\frac{n}{2p} + \frac{n}{2q} + 1} \norm{\nabla u}_{\Lp{q}(\Ball_{\mh}(x,\eta(x,\mh) \theta r), \mg)} \\
&\qquad+ 2 C_2(x,r,\mh) \theta^{\frac{n}{2p} + \frac{n}{2q}} \norm{u}_{\Lp{q}(\Ball_{\mh}(x,\eta(x,\mh) \theta r), \mg)} \\
 &\leq 2 C_1(x,r,\mh) \theta^{\frac{n}{2p} + \frac{n}{2q} + 1} \norm{\nabla u}_{\Lp{q}(\Ball_{\mg}(x,\eta(x,\mh) \theta^2 r), \mg)} \\
&\qquad+ 2 C_2(x,r,\mh) \theta^{\frac{n}{2p} + \frac{n}{2q}} \norm{u}_{\Lp{q}(\Ball_{\mg}(x,\eta(x,\mh) \theta^2 r), \mg)} \\ 
\end{align*}
Therefore, $\mg$ satisfies a $(p,q)$-generalised Poincaré inequality with constants
\begin{align*} 
&C_1(x,r,\mg) =  2C_1(x,r,\mh) \e^{\cbrac{\frac{n}{2p} + \frac{n}{2q} + 1}\rmet^\cM(\mg,\mh)}\\
&C_2(x,r,\mg) =  2C_2(x,r,\mh) \e^{\cbrac{\frac{n}{2p} + \frac{n}{2q}}\rmet^\cM(\mg,\mh)} \\
&\eta(x,\mg) = \eta(x,\mh) \e^{2 \rmet^\cM(\mg,\mh)}.
\end{align*}

Let us now restrict to $\mh \in \Met_{\Ck{\infty}}(\cM)$ and complete. 
This means that $\Ball_{\mh}(x,r)$ are compact by Hopf-Rinow and for any $\mg \in \Comp(\mh)$, we again have the same property.
Furthermore,  assume that there exists $K \geq 0$ such that 
\[
\Ric(\mh) \geq -K \mh.
\]
From Theorem~9.2 in \cite{SC}, we can set $p = q \in [1,\infty)$ to find \eqref{Def:Poin} satisfied with 
\begin{align*}
C_1(x,r,\mh) = A_p \e^{B_n \sqrt{K} r} r \quad
C_2(x,r,\mh) = 0 \quad \text{and} \quad
\eta(x,\mh) = 1,
\end{align*}
where $A_p$ is a constant dependent on $p$ and $B_n$ is a constant dependent on $n = \dim \cM$. 
Therefore, any $\mg \in \Comp(\mh)$ then satisfies
\[ 
\norm{ u - \Av_{\mg, \Ball_{\mg}(x,r)}u}_{\Lp{2}(\Ball_{\mg}(x,r),\mg)} \leq 2 A_p \e^{B_n \sqrt{K} r + (\frac{n}{p} + 1)\rmet^\cM(\mg,\mh)} r \norm{\nabla u}_{\Lp{p}(\Ball_{\mg}( \e^{2 \rmet^\cM(\mg,\mh)r},\mg)}.
\]

\subsection{Ricci curvature lower bounds and heat kernel regularity}
\label{S:Ricci}

Let $\mg \in \Met(\cM)$ and  $u \in \SobH{1}([0, \infty); \Lp{2}(\cM,\mg))$ be a solution to the heat equation
\[ 
(\partial_t u)(t,x) = (\Lap_{\mg} u)(t,x).
\]
It is well known that $u(t,x) = (\e^{-t \Lap_{\mg}} u)(x)$ and therefore, $t \mapsto u(t,\cdot)$ is analytic.
In \cite{BanBry}, it was shown that $(t,x) \mapsto u(t,x) \in \Ck{0}((0, \infty)\times \cM)$, though on any given precompact set $U$, there exists $\alpha > 0$ such that $(t,x) \mapsto u(t,x) \in \Ck{\alpha}((0,\infty) \times \cM)$.

Assume the following: 
\begin{enumerate}[label=(\roman*)] 
\item 
there is a metric  $\mh \in \Comp(\mg) \cap \Met_{\Ck{\infty}}(\cM)$ complete; 
\item 
there exists a constant $K \geq 0$ such that $\Ric(\mh) \geq -K \mh$.
\end{enumerate}

From Subsection~\ref{S:Preserve},  $\Lp{2}(\cM,\mg) = \Lp{2}(\cM,\mh)$ and $\SobH{1}(\cM,\mg) = \SobH{1}(\cM,\mh)$ as sets with equivalent norms.
Letting $\mg[u,v] = \mh[Bu,v]$ for $B \in \Ell(\cM)$ from Theorem~\ref{Thm:Stab} and choosing this to be the unique self-adjoint endomorphism, a routine calculation mimicking that in Subsection~\ref{S:DivForm} leads to  
\[
\Lap_{\mg} = -(\det B)^{-\frac12} \divv_{\mh} (\det B)^{\frac12}B \nabla.
\]
We have that 
\[ 
\e^{-\rmet^\cM(\mg,\mh)}\modulus{u}_{\mh} \leq \modulus{\sqrt{B}u}_{\mh}  \leq \e^{\rmet^\cM(\mg,\mh)}\modulus{u}_{\mh}
\]
and that 
\[
\e^{-\frac{n}{2} \rmet^\cM(\mg,\mh)} \leq (\det B)^{\frac12} \leq \e^{\frac{n}{2} \rmet^\cM(\mg,\mh)}.
\]
Applying Corollary~5.5 in \cite{SC} with a choice of $\delta = \frac12$, $r = 1$ and  noting that $\alpha = \e^{\rmet^\cM(\mg,\mh)}$ and $\mu = \e^{\frac{n}{2}\rmet^\cM(\mg,\mh)}$ are valid for our divergence-form equation, we obtain $\gamma$ dependent on dimension, $\alpha$ and $\mu$, or in other words dependent on $\rmet^\cM(\mg,\mh)$, such that 
\[
(t,x) \mapsto u(t,x) \in \Ck{\gamma}((0,\infty) \times \cM).
\]
Quantitatively, for $s > 1$ and $(t,y), (t',y') \in Q := (s - 1, s) \times \Ball_{\mh}(x,1)$ and with $Q' := (s - \frac12, s) \times \Ball_{\mh}(x, \frac12)$, we have a constant $C < \infty$ dependent only on dimension and $\rmet^\cM(\mg,\mh)$ such that 
\[ 
\modulus{u(t,y) - u(t',y')} \leq C (1 + \sqrt{K})^\gamma (\max\set{ \modulus{t - t'}, \met_{\mh}(y, y')})^\gamma \norm{u}_{\Lp{\infty}(Q)}
\]
In other words, the presence of a smooth complete metric in the component $\Comp(\mg)$ immediately improves the regularity of solutions to the heat equation with respect to any rough  Riemannian metric $\mg' \in \Comp(\mg)$.

More general divergence form operators of the form 
\[
\mathscr{L}u = -m^{-1} \divv_{\mg} (m A \nabla u  + m u X) + Yu + b u,
\]
where $m, A, X, Y, b$ are allowed to be $(t,x)$ dependent, not necessarily symmetric, equipped with natural upper and lower bounds to ensure the ellipticity of $\mathscr{L}$.
Solutions $u$ can be asserted to be Hölder regularity under the assumptions we have made on $\mh$.
This again follows immediately from Corollary~5.5 in \cite{SC}.

\subsection{Hodge theory for rough Riemannian metrics}
Let $\Forms[k]\cM \to \cM$ be the bundle of $k$-forms $0 \leq k \leq n$ and $\Forms\cM  = \oplus_{k=0}^n \Forms[k]\cM$.
On $k$-forms, the exterior derivative  $\extd: \Ck{\infty}(\cM; \Forms[k]\cM) \to \Ck{\infty}(\cM; \Forms[k+1]\cM)$, determined by the underlying differentiable structure on $\cM$.
With respect to $\mh \in \Met_{\Ck{\infty}}(\cM)$, $\extd$ has a formal adjoint $\extd^{\dagger,\mh}:\Ck{\infty}(\cM; \Forms[k]\cM) \to \Ck{\infty}(\cM; \Forms[k-1]\cM)$. 
The Hodge-Dirac operator is then given by $\Dir_{\mh,\infty} := \extd + \extd^{\dagger,\mh}: \Ck{\infty}(\cM;\Forms\cM) \to \Ck{\infty}(\cM;\Forms\cM)$. 
This is an elliptic first-order differential operator.

When $\cM$ is compact, $\Dir_{\mh,\infty}$ has a unique closure $\Dir_{\mh}$. 
The classic Hodge theorem can be paraphrased as
\[ 
\ker(\Dir_{\mh} \rest{\Forms[k]\cM}) \cong \Hom[k]_{\dR}(\cM) \cong \Hom[k]_{\Sing}(\cM),
\] 
where 
\[
\Hom[k]_{\dR}(\cM) = \faktor{\ker (\extd\rest{\Ck{\infty}(\cM;\Forms[k]\cM)})}{ \ran (\extd\rest{\Ck{\infty}(\cM;\Forms[k+1] \cM)})}
\] 
is the $k$-th de Rham cohomology (since $\extd^2 = 0$) and $\Hom[k]_{\Sing}(\cM)$ is the $k$-th singular cohomology.
A particular and important consequence of the Hodge theorem is that for another  $\mh' \in \Met_{\Ck{\infty}}(\cM)$,
\[ 
\ker(\Dir_{\mh'} \rest{\Forms[k]\cM}) \cong \ker(\Dir_{\mh} \rest{\Forms[k]\cM}).
\]

In \cite{BH}, the authors dispense with the smoothness assumption on $\mh$, considering a general $\mg \in \Met(\cM)$.
In the smooth setting, $\close{\Dir_{\mh}} = \close{\extd} + \extd^{\ast, \mh}$. 
The fact that this is a well-defined self-adjoint operator is readily asserted using the fact that $\Ck{\infty}(\cM;\Forms\cM) \subset \dom(\close{\extd}) \cap \dom(\extd^{\ast,\mh})$.
However, for a general $\mg \in \Met(\cM)$, we have  $\Ck{\infty}(\cM;\Forms\cM)  \not\subset \dom(\extd^{\ast,\mg})$ which complicates asserting  $\Dir_{\mg} := \close{\extd} + \extd^{\ast,\mg}$ is densely-defined.

Nevertheless, this can be overcome using techniques emanating from \cite{AKMC} and  the operator $\Dir_{\mg}$ is indeed self-adjoint.
The authors in \cite{BH} show that for compact $\cM$ and $\mg, \mg' \in \Met(\cM)$
\[ 
\ker(\Dir_{\mg} \rest{\Forms[k]\cM}) \cong \ker(\Dir_{\mg'} \rest{\Forms[k]\cM}).
\]
Since we can always choose $\mg' = \mh \in \Met_{\Ck{\infty}}(\cM)$, 
\[ 
\ker(\Dir_{\mg} \rest{\Forms[k]\cM}) \cong \Hom[k]_{\dR}(\cM) \cong \Hom[k]_{\Sing}(\cM).
\]
This, in particular, justifies the assertion that $\Met(\cM)$ contain metrics of \emph{geometric singularities} due to the fact that fundamental topological properties relating solutions of the Hodge-Dirac operator to the cohomology remain unchanged.

The authors \cite{BH} also consider the non-compact situation. 
Let $\cM$ now be a general smooth connected non-compact manifold and $\mg \in \Met(\cM)$.
Then, $\Met(\cM)$ will necessarily be disconnected by Theorem~\ref{Thm_com_iff_conn}. 

Although the exterior derivative $\extd$, as it is induced by the topology of $\cM$, is a well-defined object, it is more complicated to consider  $\extd$ as an unbounded operator on $\Lp{2}(\cM;\Forms\cM, \mg)$ for $\mg \in \Met(\cM)$.
This is due to the fact that $\extd$ might admit many closed extensions.
In fact, this even arises for smooth but incomplete metrics.
Therefore, the authors in \cite{BH} consider two closed extensions of $\extd$. The first is denoted by  $\extd_2$,  obtained by taking the $\Lp{2}$-closure of $\extd$ on $\set{ u \in \Ck{\infty}(\cM;\Forms\cM): \extd u \in \Lp{2}(\cM;\Forms\cM,\mg)}$. The second is $\extd_0$, taking the $\Lp{2}$-closure of $\Ck[cc]{\infty}(\cM;\Forms\cM) := \set{u \in \Ck{\infty}(\cM;\Forms\cM): \text{ and compact}}$.

Both of these extensions are \emph{nilpotent}, meaning that $\ran(\extd_2) \subset \ker(\extd_2)$ and $\ran(\extd_0) \subset \ker(\extd_0)$.
It is clear, however, that $\extd_0 \subset \extd_2$ in the sense of containment of domains. 
There are closed extensions containing $\extd_0$ and contained in $\extd_2$ which fail to be nilpotent, so fix $\extd_e$  closed and nilpotent (again meaning that $\ran(\extd_e) \subset \ker(\extd_e)$) such that $\extd_0 \subset \extd_e \subset \extd_2$.

If $\mh \in \Comp(\mg)$ is another rough Riemannian metric, then $\Lp{2}(\cM;\Forms\cM,\mh) = \Lp{2}(\cM;\Forms\cM,\mg)$ as sets and with equivalent norms by Subsection~\ref{S:Preserve}.
From the vantage point of $\Lp{2}(\cM;\Forms\cM,\mh)$, due to the equivalence of norms $\norm{\cdot}_{\Lp{2}(\mg)} \simeq \norm{\cdot}_{\Lp{2}(\mh)}$, the operator $\extd_e$ remains a closed operator with the same domain.  
However, the adjoints depend on $\mg$ and $\mh$ and therefore, we consider
\[ 
\Dir_{e,\mh} := \extd_e + \extd_e^{\ast, \mh} \quad\text{and}\quad \Dir_{e,\mh} := \extd_e + \extd_e^{\ast, \mh}.
\]
While these operators no longer satisfy elliptic regularity, the nilpotency of $\extd_e$, via techniques developed by \cite{AKMC} are self-adjoint operators (with respect to the norm  respectively with the norm induced by $\mg$ or $\mh$).
Without referencing the metric, they are $0$-bisectorial operators.
In \cite{BH}, the authors show 
\[ 
\ker(\Dir_{e,\mg} \rest{\Forms[k]\cM}) \cong \ker(\Dir_{e,\mh} \rest{\Forms[k]\cM}),
\]
even though these spaces are generally infinite-dimensional.
In other words, kernels of the Hodge-Dirac operators of nilpotent extension, remain isomorphic across connected components of $\Met(\cM)$.

We remark that the extensions $\extd_0$ and $\extd_2$ have meaning in the situation of a compact manifold with boundary. 
To fit our framework, we can consider the interiors of these manifolds and consider the boundary trace map as a limit.
In that setting, these extensions respectively compute the \emph{relative} and \emph{absolute} cohomology of such a manifold.
In that setting, the work of \cite{BH} tells us that the regularity (or lack thereof) of a rough Riemannian metric $\mg \in \Met(\cM)$ retains the underlying cohomological information.

\subsection{Functional calculus and the Kato square root problem} 
\label{S:FCKato} 
Let $A \in \Lp{\infty}(\R^n; \R^n)$ be a complex matrix-valued function such that there exist $\kappa, \Lambda \in (0,\infty)$ such that
\[ 
\Re Au\cdot u \geq \kappa\ \text{a.e.}\quad \text{and}\quad \norm{A}_{\Lp{\infty}(\R^n)} \leq \Lambda.
\]
Define $\cE_{A}: \SobH{1}(\R^n) \times \SobH{1}(\R^n) \to \C$ by $\cE_{A}[u,v] = \inprod{A \nabla u, \nabla v}$.
This is a densely-defined sesquilinear form and by the Lax-Milgram theorem, we obtain $L_{A} := - \divv A \nabla$ as a $\omega$-sectorial operator, where $\omega < 2\pi$.
As such, using the branch cut along the negative real-axis, functional calculus yields a square root $\sqrt{L_{A}}: \dom(\sqrt{L_{A}}) \subset \SobH{1}(\R^n) \to \Lp{2}(\R^n)$, a densely-defined $\omega/2$-sectorial operator.

The conjecture was initially due to Kato in 1961 in  \cite{Kato61} in a more abstract setting.
It was refined by McIntosh in 1972 in\cite{McIntosh72} through providing to a counterexample to Kato's initial conjecture. 
The reformulation was to assert that $\dom(\sqrt{L_{A}}) = \SobH{1}(\R^n)$.
In dimension $1$, this has connections to the boundedness of the Cauchy integral operator on a Lipschitz curve. 
This was solved via the use of functional calculus with links to harmonic analysis by Coifman-Meyer-McIntosh in 1982 \cite{CMMc}.
The general problem was resolved affirmatively by Auscher-Hofmann-Lacey-McIntosh-Tchamitchian in 2002 in \cite{AHLMcT}, which relied on scalar-valued real-variable harmonic analysis techniques.
A first-order version of the proof, closely mirroring the $1$-dimensional setting, was given by Axelsson(Rosén)-Keith-McIntosh in 2005 in \cite{AKMC}.
This featured the first version of the Kato square root problem on manifolds, albeit in the compact setting.

On a manifold $\cM$ with metric $\mg \in \Met(\cM)$, consider the unbounded nilpotent operator $\Gamma: \Lp{2}(\cM,\mg) \oplus \Lp{2}(\tanb\cM,\mg) \to \Lp{2}(\cM,\mg) \oplus \Lp{2}(\tanb\cM,\mg)$, with adjoint $\Gamma^{\ast,\mg}$ and the perturbed operator $\Pi_{B}$ by
\[ 
\Gamma := \begin{pmatrix} 
0 & 0  \\
\nabla & 0 
\end{pmatrix}, 
\quad
\Gamma^{\ast,\mg} 
= \begin{pmatrix}
0 & -\divv \\
0 & 0 
\end{pmatrix}, 
\quad
\text{and}
\quad
\Pi_{B} := \Gamma + B_1 \Gamma^{\ast,\mg} B_2. 
\]
The coefficients $B$ can be carefully constructed from a given $A \in \Lp{\infty}(\cM; \End(\cotanb\cM))$ with $\Re \mg[Au,u] \geq \kappa$ and $\norm{A}_{\Lp{\infty}(\mg)} < \Lambda$ such that
\[ 
\Pi_{B}^2 
= 
\begin{pmatrix} 
-\divv A \nabla & 0 \\
0 & -\nabla \divv A
\end{pmatrix}.
\]
If $\sqrt{\Pi_{B}^2}$ exists and  $\dom(\sqrt{\Pi_{B}^2}) = \dom(\Pi_{B})$ holds, then projecting to the first coordinate, an affirmative answer can be obtained for the geometric Kato square root problem.
Under boundedness and ellipticity hypothesis on $A$, $\Pi_{B}$ is $\omega$-bisectorial for $\omega < \pi/2$. 
The key is to prove quadratic estimates of the form 
\begin{equation} 
\label{Eq:Qest} 
\int_{0}^\infty \norm{\psi(\Pi_{B}) u}^2_{\Lp{2}(\mg)} \frac{dt}{t} \simeq \norm{u}_{\Lp{2}(\mg)}
\end{equation}
for all $u \in \ran(\Pi_B)$. 
Here $\psi: \interior{S}_{\mu} \subset \C \to \C$ is a holomorphic function on th open bisector $\interior{S}_{\mu}$ with $\mu > \omega$, which decays polynomially to $0$ at $0$ and $\infty$.
This provides a functional calculus $f(\Pi_{B})$ for bounded functions $f:S_{\mu} \to \C$ holomorphic on $\interior{S}_{\mu}$, providing $\dom(\sqrt{\Pi_{B}^2}) = \dom(\Pi_{B})$ and also yielding $B \mapsto f(\Pi_{B})$ is holomorphic.

In \cite{BRough}, in the language of this paper, it is shown that if \eqref{Eq:Qest} is valid for $\mg \in \Met(\cM)$, then it is valid for all $\mh \in \Comp(\mg)$ (with possibly different constants). 
In particular, if $\cM$ is compact, then the geometric Kato square root problem has a solution for every $\mg \in \Met(\cM)$.

Also in \cite{BRough}, an inhomogeneous version of the Kato square root problem is also considered, where the space in question is no longer $\Lp{2}(\cM,\mg) \oplus  \Lp{2}(\cotanb\cM,\mg)$ but rather $\Lp{2}(\cM,\mg) \oplus  \Lp{2}(\cM,\mg) \oplus \Lp{2}(\cotanb\cM,\mg)$ and $\nabla$ is replaced by $(\ident, \nabla)$.
In geometric settings, the inhomogeneous problem is more applicable. 
On a compact manifold, the two are equivalent which can be seen easily from the fact that the Laplacian exhibits a spectral gap around $0$.
The inhomogeneous problem was solved by Morris in \cite{Morris} for submanifolds of $\R^{n+k}$ with second fundamental form bounds, later improved to the intrinsic setting with bounded Ricci curvature and injectivity radius bounds by Bandara-McIntosh in \cite{BMc}, and more recently to uniform lower bounded Ricci curvature and bounded injectivity radius by Auscher-Morris-Rosén \cite{AMR}.
The results in \cite{BRough} apply to all these contexts and and so in particular, if $\mh \in \Comp(\mg)$ and $\mg \in \Met_{\Ck{\infty}}(\cM)$, complete, and has uniform positive lower bounds on injectivity radius bounds  and its its Ricci curvature is uniformly bounded below (possibly by a negative number), then the inhomogeneous Kato problem can be solved for $\mh$.

\subsection{Convergence and regularity}
\label{S:ConvReg} 
Let $\cM$ and $\cN$ be a smooth connected manifolds and $\mh \in \Met(\cN)$.
Fix an immersion $t \mapsto \Phi_t \in \Ck{0}([0,1); \Ck{0,1}(\cM, \cN))$, by which we mean that $\extd \Phi_t \neq 0$ for every $t \in [0,T)$.
We further assume that it is uniformly bounded: there exists $C \geq 1$ such that 
\begin{equation} 
\label{Eq:ConvReg}
\frac1C \modulus{(\extd \Phi_0)(x)u}_{\mh(\Phi_0(x))} \leq \modulus{(\extd \Phi_t)(x)u}_{\mh(\Phi_t(x))} \leq C  \modulus{(\extd \Phi_0)(x)u}_{\mh(\Phi_0(x))}
\end{equation} 
for $x$-almost-everywhere in $\cM$ and for all $u \in \tanb_x \cM$.

Clearly, this allows us to obtain metrics $\mg_t := \Psi_t^\ast (\mh \rest{ \Phi_t(\cM)})$ on $\cM$ since $\extd \Phi_t \neq 0$ for all $t \in [0,T)$.
Moreover, condition~\eqref{Eq:ConvReg} yields that $\mg_t \in \Comp(\mg_0)$.

Suppose now that we obtain $\rmet^\cM(\mg_t, \mg_s) \to 0$ as $t \to T$.
Theorem~\ref{Thm:Complete} then guarantees that $\mg_T \in \Comp(\mg_0) \subset \Met(\cM)$ and that $\mg_t \to \mg_T$.

If $\mh \in \Met_{\Ck{k}}(\cN)$ for $k \geq 0$ and $t\mapsto \Phi_t  \in \Ck{0}([0,T); \Ck{l}(\cM;\cN))$ for $l \geq 1$, then Theorem~\ref{Thm.limit.smooth.RRMs} yields that $\mg_T \in  \Met_{\Ck{0}}(\cM)$.

This could potentially be applied to understand regularity properties in geometric flows. 
For instance, such immersions $\Phi_t$ could arise from an extrinsic geometric flow, such as the mean curvature flow.
Such flows typically admit singularities at extinction time, which are severe and result in the change of topology. 
Nevertheless, if it is possible to obtain this $\Lp{\infty}$-bound as found in \eqref{Eq:ConvReg} along with the Cauchy property (up to subsequence), then although the immersed limit object might seem to have topological singularities, it will in fact be a smooth manifold with the singularities confined to the geometry. 

\subsection{The set of induced metric-measure spaces}
\label{S:InducedMMs}

Consider the following set
\begin{equation}
\label{Eq:MMset} 
\Spa(\cM) := \set{ (\met_{\mg}, \mu_{\mg}): \mg \in \Met(\cM) }.
\end{equation} 
From Subsections~\ref{S:Meas} and \ref{S:Dist}, we have that 
\[ 
\Met(\cM) \ni \mg \mapsto (\met_{\mg}, \mu_{\mg})
\]
defines a surjection $\Xi: \Met(\cM) \to \Spa(\cM)$.
A natural question to ask is whether information is lost when studying $\Xi (\Met(\cM))$ rather than $\Met(\cM)$.
Concretely, this is to ask whether the map $\Xi$ is an injection.

We answer this question in the negative.
For that, fix $\R^4$ which we write as the product space $\R^2 \times \R^2$.
By invoking Theorem~2 in \cite{Sturm} with $\delta = \frac12$ and $a = \delta_{\R^4}$, we obtain a new metric $\mg_1(x) := \Psi(x) \delta^{R^4}$ where $\frac12 < \Psi < 1$ almost-everywhere but with $\met_{\mg_1} = \met_{\delta_{\R^4}}$.

To disprove injectivity, we need to produce metrics $\mg, \mg' \in \Met(\cM)$ such that $\met_{\mg'} = \met_{\mg} = \met_{\delta_{\R^4}}$ with $\mu_{\mg} = \mu_{\mg'}$.
For that, we tweak Theorem~2 in \cite{Sturm} slightly for our purposes.

In the construction of $\Psi$, we choose $\set{x_k} \subset \R^4$ a countable dense subset and curves $\gamma_{k,l,m} \in \Ck{0,1}([0,1],\R^4)$ between $x_k$ and $x_l$ such that  $\len_{\delta_{\R^4}}(\gamma_{k,l,m}) \leq ( 1 + \frac 1m) \modulus{x_k - x_l}$. 
Using the same procedure, there are functions  $\psi_{k,l,m} \in\Ck{0}(\R^4)$ such that 
\[
\Leb( \set{x \in\R^4: \psi_{k,l,m}(x) > \alpha}) < (1 - \alpha) 2^{-k-l-m} \epsilon
\]  and where $\psi_{k,l,m} = 1$ on $\gamma_{k,l,m}([0,1])$.
The function $\Psi(x) = \frac12 + \frac12 \Psi_0$ where $\Psi_0 := \sup\limits_{k,l,m \in \Na} \psi_{k,l,m}$.

Fix $\beta \in (0,1]$ and note that $\Psi_0^\beta = \sup\limits_{k,l,m \in \Na} \psi_{k,l,m}^\beta$.
Define the metric $\mg_{\beta}(x) := \Psi(x)^\beta \delta_{\R^4}$.
Now, by the continuity of $\psi_{k,l,m}$, we have that 
\[
U_{k,l,m} := \set{x \in \R^4:  \psi_{k,l,m}(x) >  2 \cbrac{ \frac{1}{(1 + \frac1m)^2} - \frac12}}
\]
is an open set.
For a regular point $x \in U^\beta_{k,l,m}$,
\begin{align*} 
\frac12 + \frac12 \Psi_0(x)
>
\frac12 + \frac12 \psi_{k,l,m}(x) 
> 
\frac{1}{(1 + \frac1m)^2}.
\end{align*}
Therefore,  $\Psi^\beta(x)  > \frac{1}{(1 + \frac1m)^{2\beta}}$ and $(1 + \frac1m)^{2\beta} \delta_{\R^4} \leq (1 + \frac1m)^{2\beta} \mg_\beta(x)$ in the sense of forms and 
\begin{align*}
\frac{1}{\cbrac{1 + \frac1m}^{\beta}} \modulus{x_k - x_l} 
\leq 
\frac{1}{\cbrac{1 + \frac1m}^{\beta}} \len_{\delta_{\R^4}}(\gamma_{k,l,m}) 
&\leq  
\len_{\met_{\mg_\beta}}(\gamma_{k,l,m}) \\
&\leq \len_{\delta_{\R^4}}(\gamma_{k,l,m}) 
\leq 
\cbrac{1 + \frac1m} \modulus{x_k - x_l},
\end{align*} 
where the penultimate inequality follows from the fact that $\Psi_0^\beta \leq 1$ and $\mg_\beta \leq \delta_{\R^4}$ and the ultimate one from the choice of $\gamma_{k,l,m}$.
By Proposition~\ref{Prop:Intrinsic}, the distance $\met_{\mg_\beta}$ is a length metric and since there are curves $\gamma_{k,l,m}$ for arbitrarily large $m \in \Na$, and $\set{x_k} \subset \R^4$ is dense, we see that $\met_{\mg_\beta} = \met_{\delta_{\R^4}}$ for any choice of $\beta \in (0,1]$. 

Fix $\theta \in (0,1]$ and define the metric $\mh_{\theta}(x) := (\Psi_0^\theta\ \delta_{\R^2}) \oplus \delta_{\R^2}$ on decomposing $\R^4 = \R^2 \times \R^2$.
By the same construction as above, we see that  for almost-every $x \in U_{k,l,m}$, fixing $u = (u_1, u_2) \in \tanb_x \R^2 \oplus \tanb_x \R^2 = \tanb_x \R^4$, 
\begin{align*}
\mh_{\theta}(x)[u,u] 
= 
\Psi_0(x)^\theta \modulus{u_1}^2_{\delta_{\R^2}} + \modulus{u_2}^2_{\delta_{\R^2}} 
&> 
\frac{1}{(1 + \frac1m)^{2\theta}} \modulus{u_1}^2_{\delta_{\R^2}} + \modulus{u_2}^2_{\delta_{\R^2}}  \\ 
&> 
\frac{1}{(1 + \frac1m)^{2\theta}} (\modulus{u_1}^2_{\delta_{\R^2}} + \modulus{u_2}^2_{\delta_{\R^2}})
=
\frac{1}{(1 + \frac1m)^{2\theta}} \modulus{u}^2_{\delta_{\R^4}}
\end{align*}
Trivially, $\mh_{\theta} \leq \delta_{\R^4}$ almost-everywhere as forms, and by the same argument as above, we deduce that $\met_{\mh_{\theta}} = \met_{\delta_{\R^4}}$ for all $\theta \in (0,1]$.

Now, lets compute the volume element of each of these metrics. 
First, we note that $\det \mg_{\beta} = \det ( \Psi_0^\beta \ident_{\R^4} ) = \Psi_0^{4\beta}$ and  $\det \mh_{\theta} = \det ( (\Psi_0^\theta \ident_{\R^2}) \oplus \ident_{\R^2} ) = \Psi^{2\theta}$. 

Therefore, choosing $\theta = 1$ and $\beta = \frac12$, we find that $\det \mh_{1} = \det \mg_{\frac12}$.
Let $\mg := \mh_{1}$ and $\mg' := \mg_{\frac12}$ and we find that 
\[ 
\Xi(\mg) = (\met_{\mg}, \mu_{\mg}) = (\met_{\delta_{\R^4}}, \det \Psi_0\ d\Leb) =   (\met_{\mg'}, \mu_{\mg'}) = \Xi(\mg').
\]
This shows that $\Xi: \Met(\cM) \to \Spa(\cM)$ is not injective.
Therefore, the study we conduct in this paper on $\Met(\cM)$ is a richer space which carries more information than $\Spa(\cM)$. 

In contrast, we note that if we restrict to $\Met_{\Ck{0}}(\cM)$, then by Proposition~4 in \cite{Sturm}, $\Xi\rest{\Met_{\Ck{0}}(\cM)}:  \Met_{\Ck{0}}(\cM) \to \Spa(\cM)$ is injective. 
In fact, an even stronger statement holds: the map $\Met_{\Ck{0}}(\cM) \to \set{ \met_\mg }$ given by $\mg \mapsto \met_{\mg}$ is an injection.

\section{Rough Riemannian metrics}
\label{S:RRMs} 
The space of rough Riemannian metrics is the  primary subject of study in paper.
However, before we embark on considering this set, recall the definition of the elements of this space, \emph{rough Riemannian metrics} (or RRMs) from Definition~\ref{Def:RRM}.

RRMs generalise smooth as well as continuous Riemannian metrics.
Conceptually speaking, a RRM allowed to be low-regular, degenerate on sets that have measure zero, but retain local boundedness and ellipticity.
The example in Subsection~\ref{S:Pullback}  shows how RRMs can capture geometric singularities of metrics obtained through Lipschitz pullbacks, including Lipschitz graphs and Subsection~\ref{S:Stab} shows how singularities can arise through conformal rescalings.

Definition~\ref{Def:RRM} can be equivalently formulated by requiring a RRM $\mg$ to be a symmetric $(2,0)$ tensorfield such that $\mg, \mg^{-1} \in \Lp[loc]{\infty}(\Sym \Tensors[2,0]\cM)$, where by $\mg^{-1}$ we denote the inverse matrix corresponding to $\mg$ in a given smooth frame.
See Lemma~\ref{Lem:RoughMetEquiv}.

Since rough Riemannian metrics carry singularities, we define the singular and regular set associated to a rough Riemannian metric $\mg$: 
\begin{align*} 
\Sing(\mg) &= \set{x \in \cM: \exists u \in \tanb_x\cM\ \text{s.t.}\  \modulus{u}_{\mg(x)} = 0\ \text{or}\ \mg(x)\ \text{is not defined}} \\ 
\Reg(\mg) &= \cM \setminus \Sing(\mg).
\end{align*} 
By definition $\mu_{\mg}(\Sing(\mg)) = 0$.

\subsection{The induced measure}
\label{S:Meas}

As aforementioned in Section~\ref{S:Results}, the metric-independent measure structure can be captured by an induced measure $\mu_{\mg}$ from a metric $\mg \in \Met(\cM)$.
This measure is given by the local expression 
\begin{equation}\label{roughmeasure}
d\mu_g(x)= \sqrt{\det \mg(x)} d\psi^{*}\mathcal{L}(x), 
\end{equation}
inside a coordinate patch $(U,\psi)$.
Verifying this yields a well-defined measure is readily argued by covering $\cM$ by locally comparable charts (see Definition~\ref{Def:RRM}).

In Proposition~6~\cite{BRough}, $\mu_{\mg}$  has been shown to be Borel and finite on a compact sets. 
The following proposition expands the properties of the measure $\mu_{\mg}$ and shows that it is actually a Radon measure. 
\begin{prop}
\label{Prop:Radon}
On a smooth connected manifold $\cM$ and for $\mg \in \Met(\cM)$, the induced volume measure $\mu_{\mg}$ is Radon. 
\end{prop}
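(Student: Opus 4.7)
The plan is to verify the three defining properties of a Radon measure: local finiteness, outer regularity on Borel sets, and inner regularity by compact sets on open sets. Local finiteness has already been established in Proposition~6 of \cite{BRough}, so only the two regularity properties remain.

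First I would argue locally. In any precompact chart $(U, \psi)$, the representation $d\mu_{\mg} = \sqrt{\det \mg}\, d\psi^{*}\Leb$, together with the local comparability condition in Definition~\ref{Def:RRM}, shows that $\sqrt{\det \mg}$ is essentially bounded above and below by positive constants on $U$. Hence $\mu_{\mg}\rest{U}$ is mutually absolutely continuous with $\psi^{*}\Leb\rest{U}$ via an essentially bounded Radon--Nikodym derivative. Since Lebesgue measure on $\R^n$ is Radon and both outer regularity and inner regularity by compact subsets are preserved under multiplication by an essentially bounded density, $\mu_{\mg}$ is Radon on each precompact chart domain.

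To globalise, I would use that $\cM$, being a smooth connected (hence second countable) manifold, is $\sigma$-compact: exhaust $\cM$ by an increasing sequence of precompact open sets $V_k$ with $\overline{V}_k \subset V_{k+1}$, each covered by finitely many charts. For outer regularity of a Borel set $E$ with $\mu_{\mg}(E) < \infty$, for each $k$ I would choose an open $W_k \supset E \cap V_k$ with $\mu_{\mg}(W_k \setminus (E \cap V_k)) < \varepsilon / 2^k$ using the local statement; then $W := \bigcup_k W_k$ is open with $\mu_{\mg}(W \setminus E) < \varepsilon$. For inner regularity on an open set $O$, approximate $O \cap V_k$ by compact subsets within $V_{k+1}$ and take finite exhausting unions, exploiting monotone convergence $\mu_{\mg}(O \cap V_k) \uparrow \mu_{\mg}(O)$.

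Alternatively, one may bypass this bookkeeping entirely by invoking the classical result (for instance Folland, \emph{Real Analysis}, Theorem~7.8) that on a $\sigma$-compact locally compact Hausdorff space any Borel measure finite on compact sets is automatically Radon. The single point that uses the rough metric hypothesis in an essential way is the local boundedness of $\sqrt{\det \mg}$ in charts, which is immediate from local comparability and is the only potential obstacle; once this is noted, the argument collapses to a standard measure-theoretic citation.
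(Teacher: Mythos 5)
Your proposal is correct, but it takes a genuinely different route from the paper. The paper works with the definition of Radon as ``locally finite plus inner regular on open sets'' and proves inner regularity by hand: it covers an arbitrary open set $A$ by precompact coordinate balls with closures in $A$, extracts a countable subcover via the Lindel\"of property, builds compact exhausting sets $K_N$ as finite unions of closures, and then treats the cases $\mu_{\mg}(A)<\infty$ and $\mu_{\mg}(A)=\infty$ separately using dominated convergence and a disjointification argument, respectively. You instead localise the measure-theoretic content entirely into the observation that local comparability forces $\sqrt{\det\mg}$ to be essentially bounded above and below in each chart, so that $\mu_{\mg}$ is locally mutually absolutely continuous with Lebesgue measure with bounded density, and then you either globalise by the usual $\sigma$-compact exhaustion or simply cite the classical fact that on a second-countable locally compact Hausdorff space (where every open set is $\sigma$-compact) a Borel measure finite on compact sets is automatically Radon. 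Both arguments are sound; yours is shorter, also delivers outer regularity (which the paper does not address), and makes transparent that the only input from the rough-metric hypothesis is the two-sided bound on the density, while the paper's argument is self-contained and avoids appealing to the general regularity theorem. One minor remark: the local boundedness of $\sqrt{\det\mg}$ is really what underlies the already-cited local finiteness from Proposition~6 of \cite{BRough}, so once you invoke that proposition the ``single point using the rough metric hypothesis'' has in effect already been discharged, and the remainder is indeed pure point-set measure theory.
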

\begin{proof}
To show that  $\mu_{\mg}$ is Radon, we need to show that it is inner-regular and that it is locally finite.
Local finiteness follows from Proposition~\cite{BRough} since $\mu_{\mg}$ is finite on compact sets and there is always a pre-compact open set around a given point $x \in \cM$.

Therefore, we prove that  $\mu_{\mg}$ is inner-regular. 
Recall this means that for every open set $A \subset \cM$,
\[
\mu_{\mg}(A)= \sup \{\mu_{\mg}(K):K \subset A, \text{ K is compact set }\}.
\]
For convenience, let us define this quantity as
\[\mu_{\mg_*}(A):= \sup \{\mu_{\mg}(K):K \subset A, \text{ K is compact set }\}.\] 
To prove inner-regularity, we show that $ \mu_{\mg_*}(A) \leq \mu_{\mg}(A)$ and $\mu_{\mg}(A)\leq \mu_{\mg_*}(A)$. 
Let $K$ be any compact subset such that $K \subset A$. 
Since $\mu_{\mg}(K)\leq \mu_{\mg}(A)$, 
\[
\mu_{\mg_*}(A)= \sup \{\mu_{\mg}(K):K \subset A, \text{ K is compact set }\} \leq \sup\{\mu_{\mg}(A)\}=\mu_{\mg}(A).
\]
This proves $\mu_{\mg_{\ast}}(A) \leq \mu_{\mg}(A)$.

To prove the remaining inequality, we consider the two cases:  $\mu_{\mg}(A) < \infty$ and $\mu_{\mg}(A)= \infty$.
Assume that $\mu_{\mg}(A) < \infty$. 
Since $\cM$ is a manifold, it is locally compact and Hausdorff.
Moreover, as $A$ is an open subset of $\cM$, it is itself is a locally compact space.
Therefore, for all $x \in A$, there exists a neighbourhood $V_x$ which is diffeomorphic to a Euclidean ball such that its closure contained in $A$. 
Thus, the collection 
\begin{equation}
\label{Eq:Cover} 
\mathcal{C}=\{V_x: x\in A \text{ such that } \overline{V_x}\subseteq A \text{ and diffeomorphic to a Euclidean ball }\}
\end{equation}  
is an open cover for $A$. 
Furthermore, since $\cM$ is second-countable so $A$ inherits this property.
Every second-countable space is Lindelöf and therefore, the collection $\mathcal{C}$ has a countable subcover for $A$, say $\tilde{\mathcal{C}} = \{V_{x_i}\}_{i=1}^{\infty}$. 
Define a compact subset of $A$ as follows 
\[K_N= \cup^{N}_{i=1} \overline{V_{x_i}}.\]
Clearly, from construction $K_N \subset A$.
Since $\mu_{\mg}(A) < \infty$ is finite, we have that
\[
\mu_{\mg}(K_N) = \int_M \chi_{K_N}(y) \,d\mu_{\mg}(y) < \int_M \chi_{A}(y) \,d\mu_{\mg}(y) = \mu_{\mg}(A) < \infty.
\] 
Note also that $\cup_{N=1}^\infty K_N = A$ by definition, and therefore, $\lim_{N \to \infty} \chi_{K}(y) = \chi_{A}(y)$.
That is, $\chi_{K_N} \to \chi_A$ point-wise almost-everywhere. 
Coupling this with  $\chi_{K_N}\leq \chi_A$ and  applying the dominated convergence theorem, 
\[
\lim\limits_{N \to \infty} \int_M \chi_{K_N}(y) \,d\mu_{\mg}(y)= \int_M \lim\limits_{N\to \infty}\chi_{K_N}(y) \,d\mu_{\mg}(y)= \int_M \chi_{A}(y) \,d\mu_{\mg}(y), 
\] 
which yields $\lim\limits_{N\to \infty}\mu_{\mg}(K_N)=  \mu_{\mg}(A)$.
Therefore, for a given $\epsilon > 0$, there exists $N_{\epsilon} \in \Na$ such that 
\begin{align*}
\mu_{\mg}(A)&\leq \mu_{\mg}(K_N) + \epsilon
\leq \sup \{\mu_{\mg}(K): K \text{ compact and } K \subset A\} + \epsilon 
\leq  \mu_{\mg_*}(A) + \epsilon.
\end{align*}
Since $\epsilon > 0$ was arbitrary, we conclude $\mu_{\mg}(A) \leq \mu_{\mg_*}(A)$.
 
Now we consider $\mu_{\mg}(A) = \infty$.
For that, we first show that $A$ can be written as a countable union of disjoint precompact subsets of $A$. 
Consider the collection $\tilde{\mathcal{C}}$ from \eqref{Eq:Cover} which we utilised earlier. 
As we have seen, this is a subcover for $A$.
Since $\overline{V_{x_i}} \subset A$, $\cup_{i=1}^\infty \overline{V_{x_i}} \subseteq A$ and so  $A= \cup_{i=1}^\infty \overline{V_{x_i}}$. 

For notational simplicity, let $B_i := V_{x_i}$. 
Define $\{X_i\}$ by:
\[
X_1 := \overline{B_1} \text{ and } 
X_i := \overline{B_i}\setminus \cup_{k=1}^{i-1} \overline{B_k}.
\] 
These are mutually disjoint Borel sets.
To see this, suppose  $x \in X_i \cap X_j$ for $i \neq j$. 
Without the loss of generality, we can assume that $i < j$.
From $x \in X_i$, we have that $x \in B_i$ but from $x \in X_j$, we have that $x \not \in \cup_{k=1}^{j-1} \overline{B_k}$. 
Since $i \leq j-1$, we conclude  $x \not \in X_i$.
This is a contradiction so $X_i \cap X_j = \emptyset$ for $i \neq j$.

We assert $A = \union_{i=1}^\infty X_i$.
The inclusion $\union_{i=1}^\infty X_i \subset A$ is easy since $X_i \subset A$.
For the converse, suppose  $x\in A$ and let $i_0 = \min\set{j: x \in \close{B_j}}$. 
Note that $i_0 \geq 1$ since $A = \cup_{i=1}^{\infty} \overline{B_i}$ implies that  there is an $i$ such that  $x\in \overline{B_i}$.
Now, $x \in \close{B_{i_0}}$ and by choice of $i_0$, $x \not \in \close{B_j}$ for all $j < i_0$.
In particular, $\close{X_k} \subset \close{B_k}$ for  all $k$, and hence, $x \not \in \close{X_j}$ for $j \leq i_0 - 1$.
That is, $x \in \close{B_{i_0}} \setminus \union_{k=1}^{i_0 -1} \close{B_k}$.
This shows the reverse inclusion $\union_{i=1}^\infty X_i \subset A$  and hence, $A = \union_{i=1}^\infty X_i \subset A$.

Since $X_i$ are mutually disjoint Borel subsets, we have that 
\begin{equation}
\label{Eq:MeasA}
\mu_{\mg}(A) 
=
\mu(\union_{i=1}^\infty X_i)
=
\sum_{k=1}^\infty \mu_{\mg}(X_i).
\end{equation}

Now, define the compact set $F_N= \cup_{i=1}^{N} \overline{B_i}$. Rewriting $F_N$ as a union of disjoint sets $\{X_i\}$'s. 
\begin{align*}
F_N
= 
\overline{B_{N}} \union (\union_{j=1}^{N-1} \overline{B_{j}})
= 
\overline{B_N} \cup F_{N-1}
= 
(\overline{B_N} \setminus F_{N-1}) \sqcup F_{N-1}.
\end{align*}
Iterating this procedure, we find 
\begin{align*}
F_N
&= (\overline{B_N} \setminus F_{N-1}) \sqcup  (\overline{B_{N-1}} \setminus F_{N-2}) \sqcup ... \sqcup (\overline{B_2} \setminus \overline{B_1}) \sqcup \overline{B_1}.
\end{align*}
Noting that $\close{B_N}  \setminus F_{N-1} = X_N$,
\begin{align*}
\mu_{\mg}(F_N)&= \mu_{\mg}(\overline{B_N} \setminus F_{N-1})+\mu_{\mg}(\overline{B_{N-1}} \setminus F_{N-2})+...+\mu_{\mg}(\overline{B_2} \setminus \overline{B_1})+\mu_{\mg}( \overline{B_1})\\
&= \sum_{i=1}^{N}\mu_{\mg}(\overline{B_i} \setminus F_{i-1})\\
&= \sum_{i=1}^{N} \mu_{\mg}(X_i).
\end{align*}
From equation~\eqref{Eq:MeasA}, we have that $\mu_{\mg}(F_N) \to \infty$ as $N \to \infty$.
Since $F_N$ compact and $F_N \subset A$, we have that
\[
\mu_{\mg}(F_N) \leq  \sup \set{ \mu_{\mg}(K): K \text{ is compact and } K \subset A} = \mu_{\mg_{*}}(A). 
\]
Since this is true for any $N$, we conclude that $\mu_{\mg_{*}}(A) = \infty = \mu_{\mg}(A)$. 
\end{proof}

\subsection{The induced distance}
\label{S:Dist}

The goal of this section is to present a notion of distance for rough Riemannian metrics which generalise the corresponding notion for  smooth Riemannian metrics.
Before considering the general picture, we recall the smooth case.
First consider $\mh \in \Met_{\Ck{\infty}}(\cM)$.
If $\gamma: [a,b]\to \cM$ is a piecewise smooth curve, its $\mh$-length is given by
\begin{equation} 
\label{Eq:Len} 
\len_{\mh}(\gamma) := \int_{a}^{b} \modulus{\gamma'(t)}_{\mh(\gamma(t))}\ dt.
\end{equation} 
The induced Riemannian distance between $x, y \in \cM$ with respect to $\mh$ is then given by optimising over such curves: 
\begin{align} 
\label{Eq:RMet}
\met_{\Riem, \mh}(x, y) := \inf\set{ \len_{\mh}(\gamma): \gamma:[a,b]\to\cM \text{ piecewise smooth }, \gamma(a) = x, \gamma(b) = y }.
\end{align}

A metric $\mg \in \Met(\cM)$ is only defined up to sets of measure zero. 
Therefore, a naïve generalisation for $\mg$-length by using \eqref{Eq:Len} is not possible as this expression is not well-defined. 
We are, therefore, required to approach the definition of distance through an alternative vantage point.  
Following \cite{Norris}, we define: 
\begin{equation}
\label{Eq:IMet}
\met_{\mg}(x,y)=\sup \{w(y)-w(x): w \in \Ck{0,1}(\cM),\quad |\nabla w|_{\mg^{-1}}\leq 1 \text{ a.e.}\}. 
\end{equation}
Firstly, we recall that this is a good notion of distance.
\begin{lem}
\label{Lem:GoodDist} 
On a smooth connected manifold $\cM$ with $\mg \in \Met(\cM)$, $\met_{\mg}: \cM\times \cM \to [0,\infty)$ is a metric.
\end{lem}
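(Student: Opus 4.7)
The plan is to verify the metric axioms one by one, with the two non-routine steps being finiteness of the supremum and strict positivity for distinct points.

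The easy properties come first. Symmetry is immediate: the set $\mathcal{A}_{\mg} := \{w \in \Ck{0,1}(\cM): |\nabla w|_{\mg^{-1}} \leq 1 \text{ a.e.}\}$ is closed under $w \mapsto -w$, and $(-w)(y) - (-w)(x) = w(x) - w(y)$, so the two suprema agree. Non-negativity and $\met_{\mg}(x,x) = 0$ follow by testing against the zero function. For the triangle inequality, fix $x, y, z$ and any $w \in \mathcal{A}_{\mg}$; writing $w(z) - w(x) = (w(z) - w(y)) + (w(y) - w(x))$ and taking suprema gives $\met_{\mg}(x,z) \leq \met_{\mg}(x,y) + \met_{\mg}(y,z)$.

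For finiteness, I would exploit the connectedness of $\cM$ and the local comparability condition. Choose a continuous (hence path-connected) chain from $x$ to $y$ and cover its image with finitely many locally comparable charts $(U_1, \psi_1), \ldots, (U_N, \psi_N)$ with constants $C_1, \ldots, C_N$, and select intermediate points $x = x_0, x_1, \ldots, x_N = y$ with $x_{i-1}, x_i \in U_i$. Inside each $U_i$, the bound $|\nabla w|_{\mg^{-1}} \leq 1$ a.e.\ combined with $\mg^{-1} \leq C_i^2 (\psi_i^\ast \delta)^{-1}$ forces $|\nabla w|_{\psi_i^\ast \delta^{-1}} \leq C_i$ a.e. Since the Euclidean Lipschitz constant on a chart controls differences of $w$ along any curve (for any $w \in \Ck{0,1}$), we obtain $|w(x_i) - w(x_{i-1})| \leq C_i\, |\psi_i(x_i) - \psi_i(x_{i-1})|$, and summing over $i$ yields a uniform upper bound on $|w(y) - w(x)|$ independent of $w$, giving $\met_{\mg}(x,y) < \infty$.

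The subtler step is separation: $\met_{\mg}(x,y) = 0 \Rightarrow x = y$. For $x \neq y$, use Hausdorffness to pick a chart $(U,\psi)$ around $x$ with $y \notin U$, with local comparability constant $C_U$. Choose $r > 0$ small enough that the closed Euclidean ball $\close{B_r(\psi(x))}$ is relatively compact in $\psi(U)$, and define $f(p) := \max(r - |\psi(p) - \psi(x)|, 0)$ on $U$, extended by $0$ outside $U$. Then $f \in \Ck{0,1}(\cM)$, is $1$-Lipschitz in the $\psi^\ast\delta$ sense where nonzero, and the same comparison as above gives $|\nabla f|_{\mg^{-1}} \leq C_U$ a.e. Thus $-f/C_U \in \mathcal{A}_{\mg}$, and $(-f/C_U)(y) - (-f/C_U)(x) = f(x)/C_U = r/C_U > 0$, which shows $\met_{\mg}(x,y) \geq r/C_U > 0$.

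The main obstacle is the finiteness step, because $w \in \mathcal{A}_{\mg}$ is only required to be globally Lipschitz with respect to an object $\mg^{-1}$ that need not be globally bounded in any smooth reference frame; the only available mechanism is to transfer the bound to the Euclidean norm chart-by-chart and patch finitely many such local estimates using the connectedness of $\cM$.
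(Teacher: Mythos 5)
Your proof is correct and follows essentially the same route as the paper: finiteness via a finite chain of locally comparable charts along a continuous path, the triangle inequality from subadditivity of the supremum, and separation by exhibiting an admissible function peaked at $x$ and vanishing at $y$. The only cosmetic difference is that you use an explicit Lipschitz tent function normalised by the chart constant $C_U$ where the paper uses a smooth compactly supported bump normalised by $\norm{\nabla\phi}_{\Lp{\infty}(\cM,\mg)}$; both yield the same positive lower bound.
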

\begin{proof}
For $x,y \in \cM$, by the connectedness and local path-connectedness of $\cM$, we have a continuous path $\gamma \in \Ck{0}([0,1];\cM)$ with $\gamma(0) = x$ and $\gamma(1) = y$. 
By compactness of $\gamma([0,1])$, we can cover this by a finite number of locally comparable charts $(U_i,\psi_i)$. 
Taking points $z_i,z_{i+1} \in U_i$ along this path, we find 
\begin{align*} 
\met_{\mg}(x,y) 
\leq \sum_{i=1}^K \met_{\mg}^{U}(z_i, z_{i+1}) 
\leq \sum_{i=1}^K C_i |\psi_(z_i) - \psi(z_{i+1})| 
< \infty, 
\end{align*}  
since inside $U_i$, $C_i^{-1} \modulus{u}_{\psi^{\ast}_i\delta} \leq \modulus{u}_{\mg^{-1}} \leq C_i \modulus{u}_{\psi^{\ast}_i\delta}$.
This shows that $\met_{\mg}$ is finite. 

From definition, it is immediate that $\met_{\mg}(x,y) = \met_{\mg}(y,x)$ for all $x, y \in \cM$ and that $\met(x,x) = 0$.
Also, the triangle inequality follows from the definition, by writing $w(x) - w(y) = (w(x) - w(z)) + (w(z) - w(y))$ and using the subadditivity of the supremum.

It remains to assert that for $x \neq y$, $\met_{\mg}(x,y) > 0$. 
Fix such $x,y \in \cM$ and let $U$ be a pre-compact open set near $x$ with $y \neq U$ and $\phi:\Ck[cc]{\infty}(\cM)$ any function with $\phi(x) = 1$ and $\spt \phi \subset U$.
Clearly $0 < \norm{\nabla \phi}_{\Lp{\infty}(\cM,\mg)} < \infty$ and so $w = \phi/ \norm{\nabla \phi}_{\Lp{\infty}(\cM,\mg)} \in \Ck[cc]{\infty}(\cM) \subset \Ck{0,1}(\cM)$ satisfying $\modulus{\nabla w(y)} \leq 1$ for all $y \in \cM$.
Therefore, $\met_{\mg}(x,y) \geq w(x) - w(y) = w(x) > 0$.
\end{proof} 

Let us first show that this distance agrees with the usual Riemannian distance in the smooth setting.
It is a result which is alluded to \cite{Norris}, although its proof is terse.
In \cite{CeccoPalmieri}, a proof of this fact is also given but its proof relies on facts they establish on Lipschitz manifolds and is less straightforward.
Here, we give a straightforward and simple proof.
In what follows, $\nabla = \extd$ and $\mg^{-1}$ is the induced metric, from $\mg$, on $\cotanb\cM$.
\begin{prop}
\label{Prop:Distance} 
Let $\cM$ be a smooth connected manifold with smooth Riemannian metric $\mh$.
Then $\met_{\Riem,\mh} =  \met_{\mh}$.
\end{prop}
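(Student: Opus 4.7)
The plan is to prove the two inequalities $\met_{\mh} \geq \met_{\Riem,\mh}$ and $\met_{\mh} \leq \met_{\Riem,\mh}$ separately.

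For $\met_{\mh}(x,y) \geq \met_{\Riem,\mh}(x,y)$, the idea is to use the distance function itself as an admissible test function in \eqref{Eq:IMet}. Set $w_0(z) := \met_{\Riem,\mh}(x, z)$. The triangle inequality gives $|w_0(z_1) - w_0(z_2)| \leq \met_{\Riem,\mh}(z_1, z_2)$, and since $\met_{\Riem,\mh}$ is locally comparable to Euclidean distance in any smooth chart (by smoothness and non-degeneracy of $\mh$), $w_0$ is locally Lipschitz on $\cM$ in the classical sense and hence differentiable almost-everywhere by Rademacher. At any point $z$ of differentiability and $v \in \tanb_z\cM$ with $|v|_{\mh(z)} = 1$, testing along the unit-speed geodesic $c(t) = \exp_z(tv)$ and using $|w_0(c(t)) - w_0(z)| \leq \met_{\Riem,\mh}(c(t),z) \leq t + o(t)$ yields $|dw_0(z)(v)| \leq 1$, so $|\nabla w_0|_{\mh^{-1}} \leq 1$ almost-everywhere. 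Then $w_0$ is admissible, giving $\met_{\mh}(x,y) \geq w_0(y) - w_0(x) = \met_{\Riem,\mh}(x,y)$.

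For the reverse inequality, the goal is to show that for every admissible $w \in \Ck{0,1}(\cM)$ with $|\nabla w|_{\mh^{-1}} \leq 1$ almost-everywhere and every piecewise smooth curve $\gamma:[a,b]\to\cM$ from $x$ to $y$, one has $w(y) - w(x) \leq \len_{\mh}(\gamma)$. Taking infimum over $\gamma$ and then supremum over $w$ then yields $\met_{\mh}(x,y) \leq \met_{\Riem,\mh}(x,y)$. The approach is mollification: cover the compact image $\gamma([a,b])$ by finitely many relatively compact coordinate charts, split the parameter interval accordingly, and within each chart produce smooth approximations $w_\epsilon$ by Euclidean convolution with a standard mollifier $\chi_\epsilon$. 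For smooth $w_\epsilon$, the classical chain rule and the pointwise Cauchy--Schwarz inequality give $w_\epsilon(\gamma(\beta)) - w_\epsilon(\gamma(\alpha)) \leq \int_\alpha^\beta |\nabla w_\epsilon|_{\mh^{-1}(\gamma(t))}\,|\gamma'(t)|_{\mh(\gamma(t))}\,dt$, so the desired estimate reduces to the claim that $|\nabla w_\epsilon|_{\mh^{-1}} \leq 1 + o_\epsilon(1)$ uniformly on compact subsets. Sending $\epsilon \to 0$ and using uniform convergence $w_\epsilon \to w$ on compacts, summing over the chart-pieces, concludes the bound $w(y) - w(x) \leq \len_{\mh}(\gamma)$.

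The main obstacle is the mollification estimate, because the convolution is performed in Euclidean coordinates while the controlled norm is $|\cdot|_{\mh^{-1}}$. The key observation is that for $z$ in the chart and any $v$ with $|v|_{\mh(z)} = 1$, one has $dw_\epsilon(z)(v) = \int dw(z-y)(v)\,\chi_\epsilon(y)\,dy$, and for almost-every $y$ in the support of $\chi_\epsilon$ the admissibility of $w$ yields $|dw(z-y)(v)| \leq |v|_{\mh(z-y)}$. Since $\mh$ is smooth, $|v|_{\mh(z-y)} \leq (1 + o_\epsilon(1))\,|v|_{\mh(z)} = 1 + o_\epsilon(1)$ uniformly in $|y| \leq \epsilon$ and for $z$ in a fixed compact set. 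Integrating against $\chi_\epsilon$ and taking supremum over admissible $v$ delivers the uniform bound $|\nabla w_\epsilon|_{\mh^{-1}(z)} \leq 1 + o_\epsilon(1)$, completing the argument. The whole proof uses only continuity of $\mh$, so it would in fact extend verbatim to $\mh \in \Met_{\Ck{0}}(\cM)$.
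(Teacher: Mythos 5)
Your proposal is correct, and for the harder inequality it takes a genuinely different route from the paper. For $\met_{\mh}\geq\met_{\Riem,\mh}$ both you and the paper use the Riemannian distance function $z\mapsto\met_{\Riem,\mh}(x,z)$ as an admissible test function; the paper first proves this in $\R^n$ via a $\limsup$ identity for $\modulus{\nabla w}_{\mh^{-1}}$ and then transports the statement to $\cM$ chart by chart, whereas you argue intrinsically by testing the differential at Rademacher points along radial geodesics $\exp_z(tv)$ — the same idea, executed more directly. For $\met_{\mh}\leq\met_{\Riem,\mh}$ the paper applies the fundamental theorem of calculus to $w\circ\gamma$ and invokes the chain rule ``at a differentiable point $t$ of $w\circ\gamma$''; this step tacitly requires $w$ itself to be differentiable at $\gamma(t)$ for a.e.\ $t$, which the a.e.\ bound $\modulus{\nabla w}_{\mh^{-1}}\leq 1$ does not guarantee, since the curve may be confined to the Lebesgue-null set where $w$ fails to be differentiable. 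Your mollification argument — convolving in coordinates, using $\nabla w_\epsilon=(\nabla w)\ast\chi_\epsilon$ together with uniform continuity of $\mh$ on compacta to get $\modulus{\nabla w_\epsilon}_{\mh^{-1}}\leq 1+o_\epsilon(1)$, and passing to the limit after applying the smooth chain rule — sidesteps this issue entirely and is the more robust way to close that step. Two small caveats: the local Lipschitz bound for the distance function only needs the upper comparison $\met_{\Riem,\mh}(z_1,z_2)\leq C\modulus{z_1-z_2}$ in a convex chart, which you should state as such; and your closing remark that the argument extends ``verbatim'' to $\mh\in\Met_{\Ck{0}}(\cM)$ overreaches slightly, because the first direction as written uses $\exp_z$, hence the geodesic equation, which is unavailable for merely continuous metrics (one would have to replace geodesics by almost-minimising curves). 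Neither caveat affects the validity of the proof of the stated proposition.
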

\begin{proof}
Firstly, we consider the case where $\cM=\mathbb{R}^n$, but still allow $\mh$ to be a smooth metric possibly different to the Euclidean one.
Fix $x, y \in \R^n$.

We show that $\met_{\mh}(x,y) \leq \met_{\Riem,\mh}(x,y)$.
Suppose that $\gamma :[a,b] \to  \mathbb{R}^n$ is a piecewise smooth curve and  $w:\mathbb{R}^n \to \mathbb{R}$ be a locally Lipschitz function.
Letting $\modulus{\xi}_{\mh^{-1}}= \mh^{ij} \xi_i \xi_j$ where $(\mh^{ij}) = (\mh_{ij})^{-1}$ inside a frame, calculate $\frac{d}{dt}(w\circ \gamma)$ at a differentiable point $t$ of $w \circ \gamma$: 
\begin{align*}
\frac{d}{dt}(w\circ \gamma) (t) &= \sum_{j} (\partial_{j}w)(\gamma(t)) \dot{\gamma}_j(t) = \nabla w(\gamma(t))^{T}\ \ident\ \dot{\gamma}(t) \\
&= \nabla w(\gamma(t))^{T}\mh^{-1}(\gamma(t))\mh(\gamma(t)) \dot{\gamma}(t) 
= [{\mh^{-1}}(\gamma(t))\nabla w(\gamma(t))]^{T}]\mh(\gamma(t)) \dot{\gamma}(t) \\
&= [\mh^{-1}(\gamma(t))\nabla w(\gamma(t))]\cdot [\mh(\gamma(t)) \dot{\gamma}(t)]
\leq | [\mh^{-1}(\gamma(t))\nabla w(\gamma(t))]\cdot [\mh(\gamma(t)) \dot{\gamma}(t)]| \\
&\leq | \mh^{-1}(\gamma(t))\nabla w(\gamma(t))| |\mh(\gamma(t)) \dot{\gamma}(t)|
= |\nabla w|_{\mh^{-1}(\gamma(t))}|\dot{\gamma}(t)|_{\mh(\gamma(t))} 
\leq |\dot{\gamma}(t)|_{\mh(\gamma(t))}
\end{align*}
Further choosing $\gamma(0) = x$ and $\gamma(1) = y$,  and $\modulus{\nabla w}_{\mh^{-1}} \leq 1$ almost-everywhere, by the fundamental theorem of calculus, we have 
\begin{align}
\label{FTCd_g}
w(y)-w(x)
&= \int_{0}^{1}  \frac{d}{dt}(w\circ \gamma)(t) \, dt 
\leq \int_{0}^{1}|\dot{\gamma}(t)|_{\mh(t)} \ dt. 
\end{align}
Since \ref{FTCd_g} is true for all such curves joining $x$ and $y$, by taking an infimum over all such curves and using \eqref{Eq:RMet}, 
\[
w(y)-w(x)
\leq \met_{\Riem,\mh}(x,y).
\]
Since \eqref{FTCd_g} holds for any locally Lipschitz $w$ with $\modulus{\nabla w}_{\mh^{-1}} \leq 1$ almost-everywhere,  by taking a supremum over the left hand side and \eqref{Eq:IMet}, we obtain 
\begin{equation*}
\met_{\mh}(x,y) \leq  \met_{\Riem,\mh}(x,y).
\end{equation*} 

We now prove the reverse inequality. 
Assume again that $\gamma :[a,b] \to  \mathbb{R}^n$ is a piecewise smooth path.
Fix $x\in \mathbb{R}^n$ and define $\tilde{w}(y)=\met_{\Riem,\mh}(x,y)$.  
We will  prove that $\tilde{w}:\R^n \to \R$ is locally Lipschitz and $|\nabla \tilde{w}|_{\mh}\leq 1$ almost-everywhere.
It suffices to show that $\tilde{w}$ is Lipschitz on $r$-Euclidean balls $\Ball(r)$ as these are precompact sets exhausting $\R^n$.

Fix $r > 0$ and $\Ball(r)$.
Since $\mh$ is a smooth metric, we have a constant $C_r \geq 1$ such that $\mh \sim_{C_r} \delta_{\R^n}$
Let $x,z \in \Ball(r)$ and note
\begin{align*}
|\tilde{w}(y)-\tilde{w}(z)|&=|\met_{\Riem, \mh}(x,y)-\met_{\Riem,\mh}(x,z)|
\leq \met_{\Riem,\mh}(y,z) 
\leq C_r |y-z|.
\end{align*}
Therefore, $\tilde{w} \in \Ck{0,1}(\R^n)$. 

Now we show that $|\nabla \tilde{w}|_{\mh^{-1}}\leq 1$ almost-everywhere.
By the smoothness of $\mh$, we have for locally Lipschitz $w$ and regular point $z \in \R^n$,
\[
|\nabla w(z)|_{\mh^{-1}(z)}
=
\limsup_{y\to z}\frac{|w(y)-w(z)|}{\met_{\Riem,\mh}(y,z)}.
\]
Since $|\tilde{w}(y)-\tilde{w}(z)| =|\met_{\Riem, \mh}(x,y)-\met_{\Riem,\mh}(x,z)| \leq \met_{\Riem,\mh}(y,z)$, we conclude $|\nabla \tilde{w}(z)|_{\mh^{-1}(z)}\leq 1$.
As this is true for all regular points $z \in \R^n$, we have that $|\nabla \tilde{w}|_{\mh^{-1}}\leq 1$ almost-everywhere. 
Now, from \eqref{Eq:IMet},  we have 
\begin{align*} 
\met_{\mh}(x,y) 
&= \sup\set{ w(x) - w(y): w \in \Ck{0,1}(\R^n),\quad  \modulus{\nabla w}_{\mh^{-1}} \leq 1 \text{ a.e }} \\
&\geq \modulus{\tilde{w}(x) - \tilde{w}(y)}
= \modulus{\met_{\Riem,\mh}(x,x) - \met_{\Riem,\mh}(x,y)}
= \met_{\Riem,\mh}(x,y).
\end{align*}
Together, we obtain that $\met_{\mh}(x,y) = \met_{\Riem,\mh}(x,y)$ for all $x,y \in \R^n$. 

Now let us reconsider the case of a general smooth connected manifold $\cM$.
First, let $(U,\psi)$ be a smooth chart for $\cM$.
Suppose that $f \in \Ck{0,1}(U)$. 
Clearly, $f \circ \psi^{-1} \in \Ck{0,1}(\psi(U))$.
Furthermore, noting that $\nabla^\cM = \extd^\cM$, for $x \in U \cap \Reg(f)$, 
\begin{align*}
\modulus{(\extd^{\cM}f)(x)}_{\mg^{-1}(x)}^2
&= 
\mg^{-1}(x) [ (\extd^{\cM})(x), (\extd^{\cM})(x) ] \\
&=
\mg^{-1}( \psi^{-1} \circ \psi(x)) [ \psi^\ast (\psi^{-1})^\ast(\extd^{\cM}f)(x), \psi^\ast (\psi^{-1})^\ast(\extd^{\cM}f)(x)] \\
&=
\psi_\ast \mg^{-1}(\psi(x)) [ (\psi^{-1})^\ast(\extd^{\cM}f)(\psi(x)),  (\psi^{-1})^\ast(\extd^{\cM}f)(\psi(x))] \\
&=
\psi_\ast \mg^{-1}(\psi(x)) [ (\extd^{\R^n} f \circ \psi^{-1})(\psi(x)),  ((\extd^{\cM} f \circ \psi^{-1})(\psi(x))] \\
&= 
\modulus{(\extd^{\R^n}f \circ \psi^{-1} )(\psi(x))}_{\psi_\ast \mg^{-1}(\psi(x))}^2.
\end{align*} testing an
Therefore,
$\modulus{(\extd^{\cM}f)}_{\mg^{-1}}^2 \leq 1$ almost-everywhere in $U$ if and only if  
$\modulus{(\extd^{\R^n}f \circ \psi^{-1} )(\psi(x))}_{\psi_\ast \mg^{-1}(\psi(x))} \leq 1$ almost-everywhere in $\psi(U)$.

Fix $x,y \in \cM$. 
We first prove that $\met_{\mh}(x,y) \leq \met_{\Riem, \mh}(x,y)$. 
To that end, let $\gamma:[a,b] \to \cM$ be a piecewise smooth curve. 
Let $\{(U_i,\psi_i)\}_{i=1}^K$ be a collection of charts such that $\psi_i: U_i \to \R^n$ and $\gamma([a,b]) \subset \union_{i=1}^K U_i$.
Furthermore, let $a = t_0 < t_1 < \dots < t_K = b$ be a sequence of points such that $x_{i-1}, x_{i} \in U_i$, where $x_i := \gamma(t_i)$.
Noting that $w: \cM \to \R$ locally Lipschitz implies that $w\rest{U_i} \circ \psi^{-1}:\R^n \to \R$ locally Lipschitz, we obtain that 
\begin{align*} 
\met_{\mh}(x_{i-1},x_{i}) 
	&= \sup\set{w(x_{i-1}) - w(x_{i}): w \in \Ck{0,1}(\cM),\quad \modulus{\nabla{w}}_{\mh^{-1}} \leq 1}\\
	&= \sup\left\{w\rest{U_i} \circ \psi_i^{-1} (\psi_i(x_{i-1})) -  w\rest{U_i} \circ \psi_i^{-1} (\psi_i(x_{i})): \right. \\
	&\qquad\qquad  \left. w \in \Ck{0,1}(\cM),\quad \modulus{\nabla{w\rest{U_i} \circ \psi_i^{-1}}}_{{\psi_i}_{\ast} \mh^{-1}} \leq 1\right\} \\
	&\leq \sup\set{\tilde{w}(\psi_i(x_{i-1})) - \tilde{w}(\psi_i(x_{i})): \tilde{w} \in \Ck{0,1}(\R^n),\quad \modulus{\nabla{\tilde{w}}}_{{\psi_i}_{\ast} \mh^{-1}} \leq 1} \\
	&= \met_{(\psi^{-1}_i)^{*} \mh}(\psi_i(x_{i-1}), \psi_i(x_{i})) \\ 
	&\leq  \len_{(\psi_i^{-1})^* \mh\rest{U_i}} (\psi\circ \gamma\rest{[t_{i-1},t_{i}]}) \\
	&=  \len_{\mh}(\gamma\rest{[t_{i-1},t_{i}]})
\end{align*}
since $(\psi^{-1}_i)^\ast \mh$ is a smooth metric on $\R^n$ and we invoked the $\R^n$ equality of metrics $\met^{\R^n}_{(\psi^{-1}_i)^\ast \mh} = \met^{\R^n}_{\Riem, (\psi^{-1}_i)^\ast \mh}$ on the third equality, and the fact that $\psi_i \circ \gamma\rest{[t_{i-1}, t_i]}: [t_{i-1},t_i] \to \R^n$ is a curve between $\psi_i(x_{i-1})$ and $\psi_i(x_{i})$ for the penultimate inequality, and where the final equality follows from a change of variable using $\psi_{i}$.
Hence, 
\[ 
\met_{\mh}(x,y) 
\leq \sum_{i=1}^K \met_{\mh}(x_{i-1},x_i)
\leq \sum_{i=1}^K \len_{\mh}(\gamma\rest{[t_{i-1},t_{i}]})
= \len_{\mh}(\gamma).
\]
Taking an infimum over then yields the desired inequality $\met_{\mh}(x,y)\leq \met_{\Riem,\mh}(x,y)$.

To prove the reverse inequality, we follow the construction for the $\R^n$ case. 
For $x,y \in \cM$ fixed, define $w(z) := \met_{\Riem,\mh}(x,z)$.
Through localisation and using the $\R^n$ argument, we can see that $w: \cM \to \R$ is locally Lipschitz.
Furthermore, $\modulus{w(z) - w(z')} = \modulus{\met_{\Riem,\mh}(x,z) - \met_{\Riem,\mh}(x,z')} \leq \met_{\Riem,\mh}(z,z')$.
Using the fact that for a smooth metric $\mh$ on $\cM$
\[ 
\modulus{\nabla w(z)}_{\mh^{-1}} = \limsup_{z' \to z} \frac{\modulus{w(z) - w(z')}}{\met_{\Riem, \mh}(z,z')},
\]
we obtain that $\modulus{\nabla w}_{\mh^{-1}} \leq 1$ almost-everywhere.
Therefore, 
\[ 
\met_{\Riem,\mh}(x,y) = \modulus{w(x) - w(y)} \leq \met_{\mh}(x,y).
\]
Together, this shows that $\met_{\Riem,\mh} = \met_{\mh}$.
\end{proof}

We return now to the general situation and fix $\mg \in \Met(\cM)$.
The length function for $\sigma \in \Ck{0}([a,b]; \cM)$ with respect to the induced metric $\met_{\mg}$ is given by 
\begin{equation}
\label{Eq:dLen}
\len_{\met_{\mg}}(\sigma) := \sup\limits_{0 = t_0 < \dots < t_M = 1} \sum_{k=1}^M \met_{\mg}(\gamma(t_{k-1}),\gamma(t_k)) 
\end{equation}
The induced \emph{intrinsic distance} from $\met_{\mg}$ is then 
\begin{equation}
\label{Eq:InMet}
\met_{\ic,\mg}(x,y) 
:= 
\inf\limits_{\substack{\sigma \in \Ck{0}([a,b],\cM)\\\gamma(0) = x,\ \gamma(1) = y}} \len_{\met_{\mg}}(\sigma)
\end{equation}
It is immediate that $\met_{\mg} \leq \met_{\ic, \mg}$.

For a smooth metric $\mh \in \Met_{\Ck{\infty}}(\cM)$, we have by Proposition~\ref{Prop:Distance} that $\met_{\mh} = \met_{\Riem,\mh}$. 
Noting that $\len_{\mh}(\sigma) = \len_{\met_{\mh}}(\sigma)$ and using Corollary~2.3 in \cite{Burtscher}, it readily follows that $\met_{\mh} = \met_{\ic,\mh}$.
That is, $\met_{\mh}$ is an intrinsic (length) metric.
This is an important property which holds even for the general case as demonstrated below.
\begin{prop}
\label{Prop:Intrinsic}
On a smooth connected manifold $\cM$ with $\mg \in \Met(\cM)$, $\met_{\mg}(x,y) = \met_{\ic, \mg}(x,y)$.
That is, $\met_{\mg}$ is an intrinsic metric and $(\cM,\met_{\mg})$ is a length space.
\end{prop}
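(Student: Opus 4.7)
The inequality $\met_\mg(x,y)\leq \met_{\ic,\mg}(x,y)$ is immediate from the triangle inequality: for any continuous path $\sigma\in \Ck{0}([0,1];\cM)$ from $x$ to $y$ and any partition $0 = t_0 < \cdots < t_N = 1$, iterated application of the triangle inequality for $\met_\mg$ yields
\[
\met_\mg(x,y) \leq \sum_{k=1}^N \met_\mg(\sigma(t_{k-1}),\sigma(t_k)).
\]
Passing to the supremum over partitions gives $\met_\mg(x,y)\leq \len_{\met_\mg}(\sigma)$, and the infimum over $\sigma$ produces the asserted bound.

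For the nontrivial direction $\met_{\ic,\mg}(x,y) \leq \met_\mg(x,y)$, the strategy is to produce, for each $\epsilon>0$, a (locally Lipschitz) curve $\sigma$ from $x$ to $y$ with $\len_{\met_\mg}(\sigma)\leq \met_\mg(x,y)+\epsilon$. The construction is chart-local. Cover $\cM$ by locally comparable charts $(U_\alpha,\psi_\alpha)$ as in Definition~\ref{Def:RRM} with constants $C_\alpha$, use connectedness of $\cM$ together with the finiteness of $\met_\mg$ supplied by Lemma~\ref{Lem:GoodDist} to choose an initial continuous path joining $x$ to $y$, and subdivide it via intermediate points $x=x_0,\dots,x_N=y$ so that each consecutive pair lies in a single chart $U_{i_k}$.

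Within each chart $(U,\psi)$, push forward to $\psi(U)\subset\R^n$ where $\psi_\ast\mg$ is sandwiched between constant multiples of the Euclidean metric. The analytical input is a Fubini/chain-rule argument: for any admissible $w$ (that is, $w\in\Ck{0,1}(\cM)$ with $|\nabla w|_{\mg^{-1}}\leq 1$ a.e.), the pullback $w\circ\psi^{-1}$ is Lipschitz hence differentiable almost everywhere, and for almost every straight line segment $\tau:[0,1]\to\psi(U)$ in the family of parallel lines the chain rule
\[
(w\circ\psi^{-1})(\tau(1)) - (w\circ\psi^{-1})(\tau(0)) = \int_0^1 \nabla(w\circ\psi^{-1})(\tau(t))\cdot \tau'(t)\,dt
\]
holds; Cauchy--Schwarz against $\psi_\ast\mg$ and $(\psi_\ast\mg)^{-1}$ bounds the right-hand side by the $\mg$-length $L_\mg(\psi^{-1}\circ \tau)$. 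Taking the supremum over admissible $w$ gives $\met_\mg(\psi^{-1}(\tau(0)),\psi^{-1}(\tau(1))) \leq L_\mg(\psi^{-1}\circ\tau)$. A parallel sliding over the family of nearby line segments, combined with a near-optimal choice of $w$, allows us to select in each chart a Lipschitz curve $\sigma_k$ joining $x_{k-1}$ to $x_k$ with $L_\mg(\sigma_k)\leq \met_\mg(x_{k-1},x_k) + \epsilon/N$. Since $\len_{\met_\mg}(\sigma_k)\leq L_\mg(\sigma_k)$, concatenating produces a curve $\sigma$ from $x$ to $y$ with $\len_{\met_\mg}(\sigma)\leq \sum_{k}\met_\mg(x_{k-1},x_k)+\epsilon$.

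The main obstacle is intertwined: one must not only construct the near-optimal local curves (using the Fubini/sliding-line technique to circumvent the lack of classical geodesics in the rough setting), but also ensure the initial chain of intermediate points $x_0,\dots,x_N$ can be refined so that $\sum_k \met_\mg(x_{k-1},x_k)$ approaches $\met_\mg(x,y)$ rather than blowing up. Morally this is the approximate-midpoint/subdivision property characteristic of length metrics, and in the rough Riemannian setting it must be extracted from the supremum definition of $\met_\mg$ itself: because each local curve satisfies $L_\mg(\sigma_k)\leq \met_\mg(x_{k-1},x_k)+\epsilon/N$, a diagonal iteration between (a) refining the subdivision and (b) replacing subarcs by near-optimal local curves forces the global bound. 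The alternative route is to note that $\met_\mg$ is the intrinsic distance of the strongly local regular Dirichlet form $u\mapsto \int_\cM |\nabla u|^2_{\mg^{-1}}\,d\mu_\mg$, so that the length-space property follows from general results on intrinsic distances (compare the results of Sturm and of De Cecco--Palmieri cited in the Introduction).
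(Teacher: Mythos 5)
Your first inequality $\met_{\mg}\leq\met_{\ic,\mg}$ is correct and is exactly what the paper observes. The gap is in the reverse direction. The Fubini/sliding-segment computation you describe establishes, for almost every parallel segment $\tau$, the inequality $\met_{\mg}(\psi^{-1}(\tau(0)),\psi^{-1}(\tau(1)))\leq L_{\mg}(\psi^{-1}\circ\tau)$. That is the \emph{trivial} direction (distance bounded by length of a.e.\ curve); what you actually need is a curve $\sigma_k$ with $L_{\mg}(\sigma_k)\leq\met_{\mg}(x_{k-1},x_k)+\epsilon/N$, i.e.\ the \emph{reverse} inequality, and nothing in your sketch produces it. A near-optimal admissible $w$ is a function, not a curve, and converting between the two is precisely the duality being proved, so invoking it here is circular; and straight or slid-parallel segments need not be near-optimal (take $\mg=f\delta$ with $f$ enormous on a thin tube around the segment: every parallel segment with endpoints near $x_{k-1},x_k$ has huge $\mg$-length while $\met_{\mg}(x_{k-1},x_k)$ stays small, so genuinely curved competitors and controlled connecting pieces are required). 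Two further issues compound this. First, $L_{\mg}(\sigma)$ is not well defined for an individual curve since $\mg$ is only defined almost everywhere — the paper flags exactly this in Subsection~\ref{S:Dist} as the reason the length functional cannot be used naively — so it only makes sense for almost every member of a transversal family, whereas your argument treats it as an honest functional on curves. Second, even granting local near-optimal curves, your concatenation only bounds $\len_{\met_{\mg}}(\sigma)$ by $\sum_k\met_{\mg}(x_{k-1},x_k)+\epsilon$, and this chain sum is in general strictly larger than $\met_{\mg}(x,y)$; making the chain itself near-optimal is a separate nontrivial step that your ``diagonal iteration'' does not supply. Note also that the remark following Proposition~\ref{Prop:Intrinsic} shows, via Sturm's example, that the smooth-case shortcut $w(z)=\met_{\mg}(x,z)$ need not be admissible for rough $\mg$.

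The ``alternative route'' you mention in one sentence is in fact the paper's actual proof: it shows each local distance $\met_{\mg}^{U_i}$ on a locally comparable chart is intrinsic by appealing to Theorem~3.4 of \cite{DP90}, bounds each chain sum in Norris's auxiliary distance $\met_{0}$ from below by $\met_{\ic,\mg}(x,y)$ using the comparison $\met_{\mg}^{U_i}\geq\met_{\mg}$, and concludes via the identification $\met_{0}=\met_{\mg}$ from Theorem~3.6 of \cite{Norris}. A self-contained argument along the lines you propose would have to reprove those two inputs, which is where the real work lies.
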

\begin{proof}
Fix $\mg \in \Met(\cM)$ and let $\cU = \set{U \subset \cM:\ \psi:U \to \R^n, \text{ locally comparable}}$.
We automatically have that $\met_{\mg} \leq \met_{\ic,\mg}$ and so we prove the reverse inequality.
For that, first note the following distance from \cite{Norris}: 
\begin{equation} 
\label{Eq:NorDist} 
\met_{0}(x,y) := \inf \set{ \sum_{i=1}^K \met_{\mg}^{U_i}(z_{i-1},z_i): U_i \in \cU \text{ and } z_{i-1}, z_{i} \in \Lambda(U_i)},
\end{equation}
where $\Lambda(U_i)$ is a certain neighbourhood of the diagonal on $U_i \times U_i$ and $\met_{\mg}^{U_i}: U_i \times U_i \to \R$ is the metric on $U_i$ induced by $\mg\rest{U_i}$.

Furthermore, $U_i$ is an oriented manifold and moreover, there is a global comparability constant in $U_i$ so it satisfies the hypotheses of Theorem~3.4 in \cite{DP90} and we conclude that $\met_{\mg}^{U_i}$ an intrinsic distance and hence,
\[
\met_{\mg}^{U_i}(z,z') 
= 
\inf\limits_{\substack{\gamma \in \Ck{0}([a,b];U_i) \\ \gamma(a) = z, \gamma(b) = z'}} \sum_{a = t_0 < \dots < t_K = b} \met_{\mg}^{U_i}(\gamma(t_{i-1}), \gamma(t_i)).
\]
Since every $w \in \Ck{0,1}(\cM)$ restricts to $w \rest{U_i} \in \Ck{0,1}(U_i)$, it is easily seen that  $\met_{\mg}^{U_i}(w, w') \geq  \met_{\mg}(w,w')$ for all $w, w' \in U_i$ and therefore, each sum in the expression~\eqref{Eq:NorDist} satisfies
\begin{align*}
\sum_{i=1}^K \met_{\mg}^{U_i}(z_{i-1},z_i)
&\geq 
\sum_{i=1}^K
\inf\limits_{\substack{\gamma \in \Ck{0}([a_i,b_i];U_i) \\ \gamma(a) = z_{i-1}, \gamma(b) = z_i}} \sum_{a = t_0 < \dots < t_K = b} \met_{\mg}(\gamma(t_{i-1}), \gamma(t_i))\\
&=
\sum_{i=1}^K
\met_{\ic, \mg}(z_{i-1}, z_i)\\
&\geq 
\met_{\ic, \mg}(x,y)
\end{align*}
Combining this with \eqref{Eq:NorDist} and noting $\met_{0} = \met_{\mg}$, by Theorem~3.6 in \cite{Norris}, we obtain
\[ 
\met_{\mg}(x,y) = \met_{0}(x,y) \geq \met_{\ic,\mg}(x,y)
\]
Therefore, $\met_{\mg} = \met_{\ic,\mg}$ and is, therefore, an intrinsic metric.
\end{proof}

\begin{rem}
This result is mentioned in a number of places in the literature, including \cite{Norris}, \cite{DP90}, \cite{DP91} and \cite{DP95}. 
In these papers, even Lipschitz manifolds (with charts which are lipeomorphisms) are considered.
However, the proofs in \cite{DP90, DP91} seem to require extra hypotheses (such as orientability) and \cite{DP95} cites the earlier results.
In \cite{Norris} also mentions \cite{DP90,DP91,DP95} but does not provide a detailed argument. 
However, using the construction of $\met_{0}$ in \cite{Norris}, the results in \cite{DP91} can be utilised to dispense with extra assumptions.
\end{rem} 

\begin{rem}
Following the proofs of Proposition~\ref{Prop:Distance}, it is tempting to write $w(z) := \met_{\mg}(x,z)$, which is clearly locally Lipschitz,  and then attempt to show that $\modulus{\nabla w(z)}_{\mg}  \leq 1$ by alluding to
\begin{equation}
\label{Eq:Grad}  
\modulus{\nabla f(z)}_{\mg^{-1}}  = \lim_{z' \to z} \frac{f(z) - f(z')}{\met_{\mg}(z,z')}. 
\end{equation}
However, this equality is not in general true if the metric $\mg$ is not smooth!
A counterexample is constructed by Sturm in \cite{Sturm},  where it is shown that there exists a symmetric matrix $0 < A < 1$ (as bilinear forms) on $\R^n$ such that 
\[
\met_{\mg_{A}}(x,y) 
	= \sup \set{w(x) - w(y): w \in \Ck{0,1}(\R^n),\quad \modulus{A^{-1} \nabla w} \leq 1 \text{a.e}}
	= \modulus{ x - y}
	= \met_{\delta}(x,y).
\]
If \eqref{Eq:Grad} was true for all $\mg \in \Met(\cM)$, then one would obtain 
\begin{align*}  
\modulus{A^{-1} \nabla f(z)}_{\mg_{A}}
= 
\modulus{\nabla f(z)}_{\mg_{A}^{-1}} 
= \lim_{z' \to z} \frac{f(z) - f(z')}{\met_{\mg_{A}}(z,z')} 
= 
\lim_{z' \to z} \frac{f(z) - f(z')}{\modulus{z - z'}} 
= \modulus{\nabla f(z)}_{\delta}
\end{align*}
Since for any $u \in \cotanb_z\R^n$, we can choose $f\in \Ck{\infty}(\R^n)$ such that $\nabla f (z) = u$, this would force $A = 1$ (as a bilinear form) contradicting the counterexample in \cite{Sturm}.
\end{rem} 

Let us finally mention the connection between distance and the small-time asymptotics of heat kernels associated to the structures we have seen.
Fix $\mg \in \Met(\cM)$ and recall that  $\SobH{1}(\cM,\mg) \subset \Lp{2}(\cM,\mg)$ is the Sobolev space obtained by taking the closure of $\set{ f\in \Ck{\infty} \cap \Lp{2}(\cM): \nabla f \in \Lp{2}(\cM;\cotanb\cM,\mg)}$ with respect to the Sobolev norm $\norm{\cdot}_{\SobH{1}}^2 = \norm{\cdot}^2 + \norm{\nabla \cdot}^2$.
The space $\SobH[0]{1}(\cM),\mg)$ is then the closure of $\Ck[cc]{\infty}(\cM)$ in $\SobH{1}(\cM,\mg)$.
The associated Laplacian to each  space, by consider the energy $\cE: \SobH{1}(\cM,\mg) \times \SobH{1}(\cM,\mg) \to \C$ or $\cE_0: \SobH[0]{1}(\cM,\mg) \times \SobH[0]{1}(\cM,\mg) \to \C$ given by the map $(u,v) \mapsto \inprod{\nabla u, \nabla v}_{\Lp{2}(\mg)}$ yields a Laplacian $\Lap_{\mg}$ and $\Lap_{\mg,0}$.
Let $\hker_{\mg}: (0,\infty) \times \cM \times \cM \to \R$ and $\hker_{\mg,0}: (0,\infty) \times \cM \times \cM \to \R$ be the heat kernels associated to heat equations $\partial_t - \Lap_{\mg}$ and $\partial_t - \Lap_{\mg,0}$ respectively.

These Laplacians are natural - if the manifold is a compact manifold with boundary (where we would consider its interior to fit our boundaryless framework), $\Lap_{\mg}$ would be the Neumann Laplacian and $\Lap_{\mg,0}$ the Dirichlet Laplacian.
If $\mg$ is smooth and complete, then $\Lap_{\mg} = \Lap_{\mg,0}$.
Theorem~1.1 in \cite{Norris} asserts that 
\begin{align*} 
&t \log \hker_{\mg}(t,x,y) \to -\frac14 \met_{\mg}(x,y)^2 \\
&t \log \hker_{\mg,0}(t,x,y) \to -\frac14 \met_{\mg}(x,y)^2 
\end{align*}
uniformly on compact sets as $t \to 0$.
This is complemented by the fact that $\hker_{\mg}, \hker_{\mg,0} \in \Ck{0}( (0,\infty) \times \cM \times \cM)$, which can be obtained by results in \cite{ERS} and also \cite{BanBry}. 

This regularity and the so-called Varadhan asymptotics show that, even for our class of highly irregular geometries, diffusion sees the distance structure, although it is important to stress that by results in \cite{Sturm}, the distance is not determined by diffusion.
Moreover, both the Neumann problem and the Dirichlet problem see the same distance.

\subsection{The space of rough Riemannian metrics}
\label{S:RRMSpace}
We now consider the collection $\Met(\cM)$ of all rough Riemannian metrics  and equip it with a topology. 
This is motivated by the myriad of examples given in Section~\ref{S:Examples} which illustrates analytic and geometric properties are stable under $\Lp{\infty}$-perturbations.
This topology restricts to ``components'' which effectively capture this notion of $\Lp{\infty}$.

The candidate extended metric $\rmet^\cM: \cM \times \cM \to [0,\infty]$ is defined in Definition~\ref{DefRoughdistance}.
For two metrics $\mg, \mh$ with $\rmet^{\cM}(\mg,\mh) < \infty$, given $\epsilon > 0$, we have $C_{\epsilon} \geq 1$ with $\mg \sim_{C_{\epsilon}} \mh$ such that  $\log(C_{\epsilon}) \leq \rmet^{\cM}(\mg,\mh)  + \epsilon$. 
Therefore,
\[ 
\e^{-(\rmet^{\cM}(\mg,\mh)  + \epsilon)}\modulus{u}_{\mg} \leq \modulus{u}_{\mh} \leq \e^{(\rmet^{\cM}(\mg,\mh)  + \epsilon)}\modulus{u}_{\mg}
\] 
Letting $\epsilon \to 0$, we obtain that
\[ 
\e^{-\rmet^{\cM}(\mg,\mh)}\modulus{u}_{\mg} \leq \modulus{u}_{\mh} \leq \e^{\rmet^{\cM}(\mg,\mh)}\modulus{u}_{\mg}. 
\]
Therefore, the candidate distance provides the optimal bound in the characterising condition of closeness.
Below, we establish that this is indeed an extended metric. 

\begin{prop}
\label{Prop:RoughExtendedMetric}
The map $\rmet^\cM:\cM\times \cM \to [0,\infty] $ is an extended metric.
\end{prop}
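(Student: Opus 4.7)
The plan is to verify the four axioms of an extended metric in turn: non-negativity, identity of indiscernibles, symmetry, and triangle inequality. Most of these follow routinely from the structure of the closeness relation $\sim_C$ and basic properties of $\log$, so the goal is mainly to check that the $+\infty$ cases are handled cleanly and that the ``$\rmet^\cM(\mg,\mh)=0 \Rightarrow \mg=\mh$'' direction is argued carefully.

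First, I would note that whenever $\mg \sim_C \mh$ holds, the defining inequality forces $C \geq 1$ (take any regular point $x$ and nonzero $u \in \tanb_x\cM$), hence $\log C \geq 0$. Therefore the infimum in the definition is always non-negative, and so is $\rmet^\cM$. For $\rmet^\cM(\mg,\mg)=0$, one simply checks $\mg \sim_1 \mg$. Symmetry is also immediate: the condition $C^{-1}|u|_{\mh} \leq |u|_{\mg} \leq C |u|_{\mh}$ is preserved under swapping $\mg$ and $\mh$ with the same $C$, so the two infima agree, and the ``$+\infty$'' alternative is symmetric by definition.

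For the identity of indiscernibles, assume $\rmet^\cM(\mg,\mh)=0$. Using the remark immediately preceding the proposition, passing $\epsilon \to 0$ yields the optimal bound $\e^{-0}|u|_{\mg(x)} \leq |u|_{\mh(x)} \leq \e^{0}|u|_{\mg(x)}$, i.e.\ $|u|_{\mg(x)} = |u|_{\mh(x)}$ for almost every $x$ and all $u \in \tanb_x\cM$. Since $\mg(x)$ and $\mh(x)$ are both symmetric bilinear forms, polarisation
\[
\mg(x)[u,v] = \tfrac{1}{4}\bigl(|u+v|_{\mg(x)}^2 - |u-v|_{\mg(x)}^2\bigr)
\]
(and likewise for $\mh$) forces $\mg(x)=\mh(x)$ at every such $x$, so $\mg=\mh$ in $\Lp{0}(\Sym\End\Tensors[2,0]\cM)$.

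For the triangle inequality, I would handle the finite case first. If $\rmet^\cM(\mg,\mh) < \infty$ and $\rmet^\cM(\mh,\mk)<\infty$, take any $C_1, C_2 \geq 1$ with $\mg \sim_{C_1} \mh$ and $\mh \sim_{C_2} \mk$. Chaining the two-sided estimates gives $\mg \sim_{C_1 C_2} \mk$, hence $\rmet^\cM(\mg,\mk) \leq \log(C_1 C_2) = \log C_1 + \log C_2$. Taking the infimum independently over $C_1$ and $C_2$ yields $\rmet^\cM(\mg,\mk) \leq \rmet^\cM(\mg,\mh) + \rmet^\cM(\mh,\mk)$. When either summand is $+\infty$, the inequality is trivial in the extended sense. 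The slight subtlety worth writing out, which I think is the only genuine checkpoint, is that the infima are each attained in the limit independently, so the product $C_1 C_2$ can be pushed to $\e^{\rmet^\cM(\mg,\mh)+\rmet^\cM(\mh,\mk)}$ without ordering issues. Beyond that, there is no real obstacle.
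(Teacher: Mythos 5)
Your proposal is correct, and for the axioms the paper actually checks---symmetry, $\rmet^\cM(\mg,\mg)=0$, and the triangle inequality via chaining the two-sided estimates to get $\mg\sim_{C_1C_2}\mk$ and then optimising over $C_1,C_2$ independently---it is essentially the same argument as the paper's. The one genuine difference is that you also establish definiteness: the paper's proof never addresses the implication $\rmet^\cM(\mg,\mh)=0\Rightarrow\mg=\mh$. Your route (pass $\epsilon\to 0$ to get $|u|_{\mg(x)}=|u|_{\mh(x)}$ almost everywhere, then recover the bilinear forms by polarisation) is sound, and it does yield equality in $\Met(\cM)$ precisely because elements there are $\Lp{0}$-classes, i.e.\ identified up to null sets; so this is a worthwhile completion of the paper's proof rather than a redundancy. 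One small point to write out carefully: the exceptional null set in the inequality $\e^{-\epsilon}|u|_{\mg(x)}\leq|u|_{\mh(x)}\leq\e^{\epsilon}|u|_{\mg(x)}$ depends on $\epsilon$, so you should take a countable sequence $\epsilon_n\downarrow 0$ and intersect the corresponding full-measure sets before concluding the pointwise a.e.\ equality of norms.
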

\begin{proof}
It is immediate that $\rmet^{\cM}(\mg_1,\mg_2) = \rmet^{\cM}(\mg_2,\mg_1)$ for all $\mg_1,\mg_2 \in \Met(\cM)$ by definition.
Moreover, $\rmet^{\cM}(\mg,\mg) = 0$ since $(1 + \epsilon)^{-1} \modulus{u}_{\mg} \leq \modulus{u}_{\mg} \leq (1 + \epsilon)\modulus{u}_{\mg}$ and we can let $\epsilon \to 0$.

For three metrics $\mg_1,\mg_2,\mg_3 \in \Met(\cM)$ with $\rmet^{\cM}(\mg_i, \mg_j) < \infty$
Choose $C, C' \geq1$ such that $C^{-1} \modulus{u}_{\mg_1} \leq \modulus{u}_{\mg_2 } \leq C \modulus{u}_{\mg_1}$ and ${C'}^{-1} \modulus{u}_{\mg_2} \leq \modulus{u}_{\mg_3} \leq C' \modulus{u}_{\mg_2}$, from which its clear that 
\[ 
\frac{1}{CC'} \modulus{u}_{\mg_1} \leq \modulus{u}_{\mg_3} \leq CC' \modulus{u}_{\mg_1}.
\]
This implies $\rmet^{\cM}(\mg_1,\mg_3) \leq \log(CC') = \log(C) + \log(C')$.
Optimising over $C$ and $C'$ yields $\rmet^{\cM}(\mg_1,\mg_3)  \leq \rmet^{\cM}(\mg_1,\mg_2) + \rmet^{\cM}(\mg_2,\mg_3)$.
\end{proof}

Throughout, by the term \emph{the space of rough Riemannian metrics}, we will refer to the pair $(\Met(\cM),\rmet^\cM)$ or equivalently, the set $\Met(\cM)$ equipped with the topology given by $\rmet^{\cM}$. 
This will be the central object of our study. 

As described in the Section~\ref{S:Results}, we can consider $\Comp(\mg)$ of metrics $\mh$ with $\rmet^{\cM}(\mg,\mh) < \infty$, which is also realised by the equivalence relation $\mg \sim \mh$.
The space $(\Comp(\mg), \rmet^\cM\rest{\Comp(\mg)})$ is clearly a metric space by construction.

\subsection{The locally elliptic group and representations}
\label{S:RRMAction}

Let us now consider the set of locally elliptic transformations $\Ell(\cM)$ as defined in \eqref{Eq:Ell}.

\begin{prop}
\label{Prop:EllIsAGroup}
The set $(\Ell(\cM),\circ)$ is a group, where $\circ$ is pointwise composition on fibres. 
\end{prop}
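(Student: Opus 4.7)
The plan is to verify the four group axioms directly, using the submultiplicativity of the operator norm on fibres to handle closure.

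First I would check that the identity section $\ident \in \Ell(\cM)$: pointwise $\norm{\ident(x)}_{\op} = 1$ on every fibre (assuming $\rank \tanb\cM \geq 1$), so $\ident$ and $\ident^{-1} = \ident$ both lie in $\Lp[loc]{\infty}(\cM;\End(\tanb\cM))$. Associativity is immediate from associativity of pointwise composition of endomorphisms on each fibre.

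The substance is closure and inverses. For closure, suppose $B, C \in \Ell(\cM)$ and fix a compact $K \subset \cM$. Since the operator norm is submultiplicative,
\begin{equation*}
\norm{(B \circ C)(x)}_{\op, \mh^K(x)} \leq \norm{B(x)}_{\op, \mh^K(x)} \norm{C(x)}_{\op, \mh^K(x)}
\end{equation*}
for almost-every $x \in K$, where $\mh^K$ is any smooth reference metric on $\tanb\cM$ over $K$. The right-hand side is in $\Lp{\infty}(K)$, so $B \circ C \in \Lp[loc]{\infty}(\cM;\End(\tanb\cM))$. Moreover the pointwise inverse exists almost-everywhere and equals $C^{-1} \circ B^{-1}$, which is in $\Lp[loc]{\infty}$ by the same estimate applied to $C^{-1}$ and $B^{-1}$. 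Hence $B \circ C \in \Ell(\cM)$. For inverses, if $B \in \Ell(\cM)$ then by definition $B^{-1} \in \Lp[loc]{\infty}$, and $(B^{-1})^{-1} = B \in \Lp[loc]{\infty}$ by hypothesis, so $B^{-1} \in \Ell(\cM)$; moreover $B \circ B^{-1} = B^{-1} \circ B = \ident$ pointwise almost-everywhere.

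The only point requiring mild care is the choice of fibre metric used to measure $\Lp[loc]{\infty}$: the statement that $B \in \Lp[loc]{\infty}(\cM;\End(\tanb\cM))$ is independent of the chosen smooth reference metric on $\tanb\cM$ over each compact set, because any two smooth metrics on $\tanb\cM\rest{K}$ are uniformly comparable on the compact $K$, and the same comparability transfers to the operator norms on $\End(\tanb\cM)\rest{K}$. This ensures that the estimates above are unambiguous and complete the verification of the group axioms.
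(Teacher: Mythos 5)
Your proof is correct and follows essentially the same route as the paper: identity and associativity are immediate, and closure under composition and inversion follow from submultiplicativity of the fibrewise operator norm over compact sets with respect to an auxiliary smooth metric. The only detail the paper records that you leave implicit is the (routine) measurability of the pointwise composition $B\circ C$, which is needed since $\Lp[loc]{\infty}$ sections are by definition measurable.
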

\begin{proof}
Associativity of $\circ$ is clear and $\Id \in \Ell(\cM)$ which is the identity element.
Furthermore, for $B \in \Ell(\cM)$, clearly $B^{-1} \in \Ell(\cM)$ (in fact, by definition).

Let $B_1, B_2 \in \Ell(\cM)$.
Then, clearly for almost-every $x$, $B_1 B_2(x) \in \mathrm{GL}(\tanb_x\cM)$.
Therefore, $B_1 B_2$ is measurable since $B_1$ and $B_2$ are measurable.

Fix a compact subset $K \subset \cM$ and an auxiliary metric $\mg$.
Then, 
\[
\norm{B_1B_2}_{\Lp{\infty}(K,\mg)} 
\leq \norm{B_1}_{\Lp{\infty}(K,\mg)} \norm{B_2}_{\Lp{\infty}(K,\mg)}. 
\]
A similar argument shows that $(B_1 B_2)^{-1} = B_2^{-1} B_1^{-1} \in \Lp{\infty}(K; \End(\tanb\cM), \mg)$. 
Therefore, $(\Ell(\cM),\circ)$ is a group.
\end{proof}

We now prove Theorem~\ref{Thm:Stab}. 
For that, we first establish this lemma. 
\begin{lem}
\label{Lem:RoughMetEquiv}
We have that $\mg \in \Met(\cM)$ if and only if $\mg \in \Lp[loc]{\infty}(\cM; \Tensors[2,0]\cM)$ and $\mg^{-1} \in \Lp[loc]{\infty}(\cM; \Tensors[2,0]\cM)$, where $\mg^{-1} = (\mg_{ij})^{-1}$ in a local frame.
\end{lem}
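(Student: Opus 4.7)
The plan is to establish the equivalence by translating the pointwise comparability condition in Definition~\ref{Def:RRM} into local $\Lp{\infty}$-bounds on both $\mg$ and $\mg^{-1}$, and conversely. The core linear-algebra input driving both directions is the following spectral observation: for an a.e. defined symmetric positive-definite bilinear form $\mg(y)$ viewed relative to a smooth reference metric $\mh(y)$ on the fibre $\tanb_y\cM$, the largest eigenvalue of the associated endomorphism equals $\norm{\mg(y)}_{\op,\mh}$ while its smallest eigenvalue equals $\norm{\mg^{-1}(y)}_{\op,\mh}^{-1}$. Consequently, essentially bounding both $\norm{\mg(y)}_{\op,\mh}$ and $\norm{\mg^{-1}(y)}_{\op,\mh}$ on a set is equivalent to a two-sided comparison $C^{-1}\lmodulus{u}_{\mh(y)} \leq \lmodulus{u}_{\mg(y)} \leq C\lmodulus{u}_{\mh(y)}$ holding a.e. on that set.

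For the forward implication, fix $x \in \cM$ and choose a locally comparable chart $(U,\psi)$ with constant $C_U \geq 1$ as in Definition~\ref{Def:RRM}. Squaring \eqref{cc1} and invoking the spectral observation above with reference metric $\psi^\ast\delta$ yields $\norm{\mg(y)}_{\op,\psi^\ast\delta} \leq C_U^2$ and $\norm{\mg^{-1}(y)}_{\op,\psi^\ast\delta} \leq C_U^2$ for a.e. $y \in U$. Given any compact $K \subset \cM$ and any smooth reference metric $\mh^K$ on $K$, cover $K$ by finitely many comparable chart neighbourhoods $(U_i,\psi_i)$ and use that $\psi_i^\ast \delta$ and $\mh^K$ are continuous on the compact overlaps with $K$, hence mutually bounded by a finite constant. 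Combining these bounds produces a single $C_K < \infty$ with $\norm{\mg}_{\op,\mh^K}, \norm{\mg^{-1}}_{\op,\mh^K} \leq C_K$ a.e. on $K$, so $\mg,\mg^{-1} \in \Lp[loc]{\infty}$.

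For the converse, fix $x \in \cM$ and a precompact chart $(U,\psi)$. Apply the assumed local $\Lp{\infty}$-bounds to $K = \close{U}$ with reference metric $\mh^K = \psi^\ast\delta$ (which is smooth on $\close{U}$) to obtain $M \geq 1$ with $\norm{\mg(y)}_{\op,\psi^\ast\delta} \leq M$ and $\norm{\mg^{-1}(y)}_{\op,\psi^\ast\delta} \leq M$ for a.e. $y \in U$. The spectral observation then gives
\[
M^{-1}\lmodulus{u}_{\psi^\ast\delta(y)}^2 \leq \lmodulus{u}_{\mg(y)}^2 \leq M \lmodulus{u}_{\psi^\ast\delta(y)}^2
\]
for all $u \in \tanb_y U$ and a.e. $y \in U$. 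Taking square roots gives \eqref{cc1} on $(U,\psi)$ with constant $C_U := \sqrt{M}$, so $\mg \in \Met(\cM)$.

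The only subtlety, which is essentially bookkeeping rather than a genuine obstacle, is reconciling the two natural fibrewise norms on $\End(\cE)$: the operator norm $\norm{\cdot}_{\op,\mh}$ used above and the tensor-product (Frobenius) norm induced by $\mh^\ast\tensor \mh$ on $\Tensors[1,1]\cE$. As noted in the notation section, these differ by a constant depending only on $\rank\cE$, so membership in $\Lp[loc]{\infty}$ does not depend on the choice. With that identified, both directions reduce to the spectral translation and a routine finite cover plus continuity argument.
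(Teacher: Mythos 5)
Your proof is correct and follows essentially the same route as the paper's: both directions reduce to comparing $\mg$ with pullback Euclidean metrics on (pre)compact chart neighbourhoods, translating the two-sided form bounds of the local comparability condition into essential operator-norm bounds on $\mg$ and $\mg^{-1}$ and back. The only cosmetic difference is that the paper patches a smooth reference metric over a compact set via a partition of unity while you compare the chart metrics directly to an arbitrary smooth reference via uniform comparability of continuous metrics on compacta; your explicit spectral bookkeeping is, if anything, more detailed than the paper's rather terse argument.
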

\begin{proof}
If the condition is satisfied, then we can choose charts $(U,\psi)$ such that $(\close{U},\psi))$ sits inside a chart and $\close{U}$ is compact.
Then, the $\Lp{\infty}$ bound on $\mg$ and $\mg^{-1}$ by choosing the pullback metric $\psi^{\delta}$ with which to compute the $\Lp{\infty}(U)$ norm shows that $\mg \in \Met(\cM)$.

Conversely, given a compact set $K \subset \cM$, we have a finite number of local comparability charts $(U_i, \psi_i)$ such that $K \subset \union_{i=1}^N U_i$.
We patch together a metric induced by those local comparability charts and via a partition of unity $\set{\eta_i}_{i=1}^N$ and  patch together a smooth metric $\mh = \sum_{i=1}^N \eta_i \psi_i^\ast \delta_{\R^n}$.
The $\Lp{\infty}$-bound of $\mg$ with respect to this metric $\mh$ uniformly bounded above and below as we can take an appropriate maximum arising in the constants of the local comparability charts.
\end{proof}

\begin{proof}[Proof of Theorem~\ref{Thm:Stab}]
To prove Theorem~\ref{Thm:Stab}~\ref{Thm:Stab:i}, we fix $\mg \in \Met(\cM)$ and $B \in \Ell(\cM)$.
Then, for almost-every point on a compact subset $K$, 
\[ 
\mg_B[u,u] 
= \mg[Bu,Bu]
= \norm{\sqrt{B^\ast B}u}^2_{\mg}.
\]
By Lemma~\ref{Lem:RoughMetEquiv}, since $\mg \in \Met(\cM)$ and optimising over $u \in \tanb_x \cM$, we have $C_K \geq 1$ such that 
\[
\frac{1}{C_K} \leq \norm{\sqrt{B^\ast B}}_{\Lp{\infty}(K,\mg)} \leq C_K.
\] 
Therefore, $\sqrt{B^\ast B} \in \Ell(\cM)$ and also $B \in \Ell(\cM)$.

The conclusion Theorem~\ref{Thm:Stab}~\ref{Thm:Stab:ii} then follows immediately, from the definition of  $\action: \Ell(\cM) \times \Met(\cM) \to \Met(\cM)$.

To prove \ref{Thm:Stab:iii}, fix $\mg$ and $\mh$ and choose $x \in \Reg(\mg) \cap \Reg(\mh)$.
Since $\mg(x)$ and $\mh(x)$ are two inner products on the vector space $\tanb_x \cM$, it is a routine fact in linear algebra that there exists $B_x \in \Sym\Tensors[1,1]_x\cM$ such that 
\[ 
\mg(x)[u,v] = \mh(x)[B_x u,B_x v]
\] 
for all $u, v \in \tanb_x\cM$. 
Define $B(x) := B_x$ and since $\mg$ and $\mh$ are measurable, it follows that $B$ is also measurable.
By the same argument which shows Theorem~\ref{Thm:Stab}~\ref{Thm:Stab:i}, we deduce that $B \in \Ell(\cM)$.

We see that $\mg[u,v] = \mh[ B^\ast B u, v] = \mh[ \sqrt{B^\ast B} u, \sqrt{B^\ast B} v]$.
Uniqueness of a $\mh$-self-adjoint is readily argued. 
Let $B' := B^\ast B$ and $B'' \in \Ell(\cM)$ with $(B'')^{\ast, \mh} = B''$ such that $\mg(x)[u,v] = \mh(x)[B''(x)u,v]$, then $0 = \mh(x)[B'(x)-B''(x))u,v]$ and therefore $B'(x) = B''(x)$ for almost-every $x \in \cM$.
Clearly, $\sqrt{B'(x)} = \sqrt{B''(x)}$ which finishes the proof. 
\end{proof} 

\begin{rem}
Proposition~10 in \cite{BRough} expresses, in the language of this paper, that if $\mg$ and $\mh$ satisfy $\mh \in \Comp(\mg)$, we have that $B \in \Lp{\infty}(\cM; \Sym\End(\tanb\cM),\mg)$ and $B^{-1} \in \Lp{\infty}(\cM; \Sym\End(\tanb\cM),\mg)$. 
\end{rem}

\section{Smooth metrics}
\label{S: SmoothRRM} 

Although we allow for less than continuous objects in $\Met(\cM)$, by keeping $\cM$ smooth, we are afforded the possibility to study the structure of smooth and continuous objects in $\Met(\cM)$.
In Subsection~\ref{S:NonApprox}, we have already seen that there are rough Riemannian metrics which cannot be smoothly approximated in  $(\Met(\cM),\rmet^\cM)$.
Therefore, the goal of this section is to identify the closure of smooth metrics in $\Met(\cM)$ with respect to $\rmet^\cM$.
We will first consider the closure of smooth metrics and show that this lies in the subspace of continuous metrics. 
After that, we will consider the converse. 

\subsection{The closure of smooth metrics}

We begin with the following technical lemma.
\begin{lem}\label{bundlemetriclem}
Let $\cE \to \cM$ be a real vector bundle of rank $N$ and  $\widetilde{\mg} $ a smooth bundle metric on $\cE$. 
Suppose that  $\{\xi_n\} \subset \Ck{\infty} \cap \Lp{\infty} (\cM;\cE,\widetilde{\mg})$  and $\xi \in \Lp{\infty}(\cM;\cE,\widetilde{\mg})$ such that as $n \to \infty$, 
\[
\lVert\xi_n -\xi \rVert_{\Lp{\infty}(\cM;\cE,\widetilde{\mg})}=\esssup\limits_{x\in \cM}|\xi_n(x)-\xi(x)|_{\widetilde{\mg}(x)}.
\]
Then, $\xi \in \Ck{0}(\cM;\cE)$.
\end{lem}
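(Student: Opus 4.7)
The plan is to exploit that continuity is a local property, reduce to a local trivialisation of $\cE$, and then invoke the standard fact that a uniform (essential) limit of continuous functions has a continuous representative.

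First, I would fix an arbitrary point $x_0 \in \cM$ and a precompact open neighbourhood $U \ni x_0$ sitting inside a smooth chart on which $\cE$ admits a smooth local trivialisation $\Phi: \cE\rest{U} \to U \times \R^N$. Writing sections $\xi_n, \xi$ in this trivialisation as $N$-tuples of measurable functions $(\xi_n^1, \ldots, \xi_n^N)$ and $(\xi^1, \ldots, \xi^N)$ on $U$, the assumption that $\xi_n$ is smooth makes each $\xi_n^i \in \Ck{\infty}(U)$.

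Next, I would use that $\mgt$ is a smooth bundle metric on $\cE$ and that $\close{U}$ is compact. Let $\delta_N$ denote the flat Euclidean fibrewise metric induced by $\Phi$ on $U \times \R^N$. By smoothness of $\mgt$ and compactness of $\close{U}$, there exists a constant $C \geq 1$ such that
\begin{equation*}
C^{-1} \modulus{v}_{\delta_N} \leq \modulus{v}_{\mgt(x)} \leq C \modulus{v}_{\delta_N}
\end{equation*}
for every $v$ in the fibre over $x \in \close{U}$. Consequently, the hypothesis that $\norm{\xi_n - \xi}_{\Lp{\infty}(\cM;\cE,\mgt)} \to 0$ implies that $\norm{\xi_n^i - \xi^i}_{\Lp{\infty}(U)} \to 0$ for each coordinate $i = 1, \ldots, N$.

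Then, for each fixed $i$, the sequence $\{\xi_n^i\}$ is Cauchy in $\Lp{\infty}(U)$. Since each $\xi_n^i$ is continuous on $U$, it coincides with its essential supremum representative, so the sequence is in fact Cauchy in the sup norm on $\Ck{0}(U) \cap \Lp{\infty}(U)$. The latter is a Banach space, so $\xi_n^i$ converges uniformly to some $\tilde{\xi}^i \in \Ck{0}(U)$. By uniqueness of $\Lp{\infty}$-limits modulo null sets, $\xi^i = \tilde{\xi}^i$ almost-everywhere on $U$. Assembling the components back, $\xi$ agrees almost-everywhere on $U$ with the continuous section $\Phi^{-1}(\tilde{\xi}^1, \ldots, \tilde{\xi}^N)$. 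Since $x_0$ was arbitrary and local continuous representatives agree almost-everywhere on overlaps (hence everywhere by continuity), they glue to a global continuous section that equals $\xi$ almost-everywhere, so $\xi \in \Ck{0}(\cM;\cE)$.

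The main conceptual point (rather than obstacle) is keeping straight that $\xi \in \Lp{\infty}$ is only defined up to null sets, so the conclusion is really that $\xi$ admits a continuous representative; the rest is a routine reduction to Banach-space completeness of $\Ck{0} \cap \Lp{\infty}$ on a precompact chart.
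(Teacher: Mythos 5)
Your proof is correct and follows essentially the same route as the paper: localise to a trivialisation, reduce to scalar component functions, and use that a uniform limit of continuous functions is continuous, then glue. The only cosmetic difference is that the paper uses a Gram--Schmidt orthonormal frame so the bundle norm is exactly Euclidean componentwise, whereas you use an arbitrary trivialisation together with a two-sided comparability constant on a precompact set; both work equally well.
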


\begin{proof}
Let $(U_{\alpha},\phi_{\alpha})$ be charts that cover $\cM$ which coincide with local trivialisations $(U_{\alpha},\Phi_{\alpha})$ for $\cE$.
Letting  $\pi:\cE \to \cM$ denote the projection map, we have that $\Phi_{\alpha}:\pi^{-1}(U_{\alpha}) \to U_{\alpha} \times \mathbb{R}^N$ is a local trivialisation. 
Moreover, by applying Gram-Schmidt, we can find an orthonormal frame $U_\alpha \ni x \mapsto \{t_1(x),...,t_N(x)\}$ with respect to $\widetilde{\mg}$. 
Clearly, $x \mapsto t_j(x)$ is a smooth map since $\widetilde{\mg}$ is smooth. 

Fix  $\{\xi_n\}$  and $\xi$ as in the hypothesis of the lemma. 
Inside $U_\alpha$, we write 
\[
\xi_n(x)= \sum_j C_n^j(x)t_j(x),\quad 
\xi(x) =\sum_jC^j(x)t_j(x),
\]
where $C_n^j \in \Ck{\infty}(U_\alpha, \R)$ and $C^j \in \Lp{0}(U_\alpha,\R)$. 
At $x\in U_\alpha$, 
\begin{align*}
|\xi_n(x) -\xi(x)|^2_{\widetilde{g}(x)}&=\widetilde{g}(x)[\xi_n(x) -\xi(x),\xi_n(x) -\xi(x)]\\
&=\widetilde{g}(x)[\sum_j (C_n^j(x)-C^j(x))t_j(x),\sum_r (C_n^r(x)-C^r(x))t_r(x)]\\
&=\sum_{j,r} (C_n^j(x)-C^j(x))(C_n^r(x)-C^r(x))\widetilde{g}(x)[t_j(x),t_r(x)]\\
&=\sum_{j,r} (C_n^j(x)-C^j(x))(C_n^r(x)-C^r(x))\delta^j_r\\
&=\sum_{j} (C_n^j(x)-C^j(x))^2\\
&\simeq 
\cbrac{\sum_{j} \modulus{C_n^j(x)-C^j(x)}}^2,
\end{align*}
since any two norms are equivalent on a finite dimensional vector space with constant only dependent on dimension.
Moreover, since $\norm{\cdot}_{\Lp{\infty}(U_\alpha)} \leq \norm{\cdot}_{\Lp{\infty}(\cM)}$, we have that  for each $j$, 
the sequence  $\set{C_n^j}$ of functions  satisfy :w
i
\begin{equation*}\label{conv_fun_sec}
\lVert C^j_n-C^j \rVert_{\Lp{\infty}(U_\alpha)}=\esssup_{x\in U_\alpha}|C^j_n(x)-C^j(x)| \to 0,
\end{equation*}
as $n \to \infty$.
This means that $\set{C^j_n}$ is a Cauchy sequence in $\Lp{\infty}(U_\alpha)$ and by smoothness, it is Cauchy sequence in $\Ck{0}(U_\alpha)$. 
For any compact subset $K \subset U_\alpha$, we obtain $Y^{j}_K \in \Ck{0}(K)$ such that $C^j_n\rest{K} \to Y^{j}_K$ as $n \to \infty$.
Since $C^j_n\rest{K} \to C^j\rest{K}$ in $\Lp{\infty}(K)$, which is a metric space and hence Hausdorff, we have that $C^j\rest{K} = Y^j_{K}$ almost-everywhere in $K$.

For every $x \in U_\alpha$ we can choose $K_x$ compact with nonempty interior.
Since  $C^j\rest{K_x} \in \Ck{0}(K_x)$ we conclude that $^^j \in \Ck{0}(U_\alpha)$ and therefore, $\xi\rest{U_\alpha} \in \Ck{0}(U_\alpha; \cE)$.
As the $U_\alpha$ are a cover for $\cM$, we conclude $\xi \in \Ck{0}(\cM;\cE)$.
\end{proof}

With the aid of the previous Lemma~\ref{bundlemetriclem}, we show the closure of the set of smooth metrics contained in the set of continuous metric in the rough space.  

\begin{proof}[Proof of Theorem~\ref{Thm.limit.smooth.RRMs} ``$\subset$'']
Let $\mg \in \Met(\cM)$ and  $\{\mg_n\}\subseteq \Met_{\Ck{\infty}}(\cM)$ such that $\rmet^\cM(\mg_n, \mg) \to 0$.
We prove that $\mg \in \Met_{\Ck{0}}(\cM)$.

Fix $\epsilon > 0$.
By convergence of $\mg_n \to \mg$ in $\rmet^\cM$,  there is a $N = N(\epsilon) \in \mathbb{N}$ such that whenever $n \geq N$, we have $\rmet^\cM(\mg_n,\mg)<\epsilon$. 
Therefore, $\mg \sim_{\e^\epsilon}\mg_n$ which yields
\begin{equation} 
\label{Eq:epsilon}
\e^{-\epsilon}|u|_{\mg(x)}
\leq 
|u|_{g_n(x)}
\leq 
\e^{\epsilon}|u|_{\mg(x)}.
\end{equation} 
Since we can restrict to $\epsilon < 1$ and rearrange the sequence, without loss of generality, assume that $\rmet^\cM(\mg_1, \mg_n) < 1$ for all $n \in \Na$.
This in particular means that $\mg_n \in \Comp(\mg_1)$ and by Theorem~\ref{Thm:Stab}~\ref{Thm:Stab:iii},  there exists $\mg_1$-self-adjoint $B_n \in \Ell(\cM)$ such that 
\[
\mg_n(x)[u,v]
=
\mg_1(x)[B_n(x)u,B_n(x)u].
= 
\mg_1(x)[B_n^{2}(x)u,u]
\]
Similarly, there exists  $B \in \Ell(\cM)$ such that  $\mg(x)[u,v] = \mg_1(x)[B(x)u,B(x)v] = \mg_1(x)[B^2(x)u,v]$.

To prove the theorem, we show that that $\norm{B_n^2 - B^2}_{\Lp{\infty}(\Tensors[2,0]\cM,\widetilde{\mg}_1)} \to 0$ as $n \to \infty$, where $\widetilde{\mg}_1$ is the metric from $\mg_1$ on $\Tensors[2,0]\cM$.
Note that $\mg_1 \in \Met_{\Ck{\infty}}(\cM)$ by choice and therefore, $\widetilde{\mg}_1$ is a smooth bundle metric.
This allows us to invoke Lemma~\ref{bundlemetriclem} with a choice of $\cE = \Tensors[2,0]\cM$ to conclude that $B^2 \in \Ck{0}(\cM;\Tensors[2,0]\cM)$.
Therefore, $\mg$ is continuous since $\mg[u,v] = \mg_1[Bu,Bv]$.
 
First note that from \eqref{Eq:epsilon}, for a regular point $x \in \Reg(\mg)$,
\[
\e^{-\epsilon} |B(x)u|_{\mg_1(x)}
\leq 
|B_n(x)u|_{\mg_1(x)}
\leq 
\e^{\epsilon }|B(x)u|_{\mg_1(x)}.
\]
By transforming $u=B^{-1}(x)v$, we obtain 
\[
\e^{-\epsilon} |v|_{\mg_1(x)}
\leq 
|B_n(x)B^{-1}(x)v|_{\mg_1(x)}
\leq 
\e^{\epsilon }|v|_{\mg_1(x)}
\] 
for all $v \in \tanb_x\cM$.
Normalising by $\modulus{v}_{\mg_1(x)} \neq 0$ and taking a supremum, 
\begin{equation} 
\label{Eq:MatrixNorm} 
\e^{-\epsilon}
\leq 
\norm{B_n(x)B^{-1}(x)}_{\op,\mg_1(x)}
\leq 
\e^{\epsilon }
\end{equation}
and as $n \to \infty$, $\norm{B_n(x)B^{-1}(x)}_{\op,\mg_1(x)} \to 1$.  
Moreover, recall that the matrix norm is equal to the spectral norm
\begin{align*} 
\lVert B_n(x)B^{-1}(x)\rVert_{\op,\mg_1(x)}^2
&=
\lambda_{\max}((B_n(x)B^{-1}(x))^{\ast}(B_n(x)B^{-1}(x))) \\
&= 
\lambda_{\max}(B^{-1}(x)B_n^2(x)B^{-1}(x)),
\end{align*}
where by $\lambda_{\max}(X)$, we denote the largest eigenvalue for a positive symmetric matrix $X$. 
Using \eqref{Eq:MatrixNorm}, this leads to
\begin{equation*}
\e^{-2\epsilon} \leq \lambda_{\max}(B^{-1}(x)B_n^2(x)B^{-1}(x)) \leq \e^{2\epsilon},
\end{equation*}
Replacing  $B_n(x)B^{-1}(x)$ by $B(x)B^{-1}_n(x)$ and using the spectral mapping theorem to assert $\lambda_{\min}(X) = \lambda_{\max}(X^{-1})$ when $X$ is positive, symmetric and invertible, we get 
\begin{equation}
\begin{split}
\lVert B(x)B^{-1}_n(x)\rVert^2_{\op,\mg_1(x)}&= \lambda_{\max}((B(x)B^{-1}_n(x))^{\ast}(B(x)B^{-1}_n(x)))\\
& = \lambda_{\min}((B_n(x)B^{-1}(x))^{\ast}(B_n(x)B^{-1}(x)))  
\end{split}
\end{equation}
This leads to the two-sided bounds
\begin{equation}
\begin{split}
e^{-2\epsilon}\leq \lambda_{\min}(B^{-1}(x)B_n^2(x)B^{-1}(x))\leq \lambda(B^{-1}(x)B_n^2(x)B^{-1}(x)) \\
\qquad \leq \lambda_{\max}(B^{-1}(x)B_n^2(x)B^{-1}(x)) \leq e^{2\epsilon},
\end{split}   
\end{equation}
The matrix $B^{-1}(x)B_n^2(x)B^{-1}(x)$ is symmetric as it is the conjugation of the symmetric matrix $B_n^2(x)$  by the symmetric matrix $B^{-1}(x)$.
Therefore, $B^{-1}(x)B_n^2(x)B^{-1}(x)$ is diagonalisable as
 \[
D_n(x)=P_n(x)(B^{-1}(x)B_n^2(x)B^{-1}(x))P_n^{-1}(x),
\]
where $P_n(x)$ is orthogonal and where $D_n(x)$ is the diagonal matrix of the eigenvalues of the matrix $B^{-1}(x)B_n^2(x)B^{-1}(x)$. 

We compute the matrix norm of $B_n^2(x)-B^2(x)$:
\begin{align*}
\lVert B_n^2(x)-B^2(x)\rVert_{\op,\mg_1(x)}&=\lVert[B_n^2(x)B^{-1}(x)-B(x)]B(x)\rVert_{\op,\mg_1(x)}\\
&=\lVert B(x)[B^{-1}(x)B_n^2(x)B^{-1}(x)-\Id]B(x)\rVert_{\op,\mg_1(x)}\\
&\leq \lVert B(x)\rVert^{2}_{\op,\mg_1(x)}\ \lVert B^{-1}(x)B_n^2(x)B^{-1}(x)-\Id\rVert_{\op,\mg_1(x)}\\
&= \norm{B(x)}_{\op,\mg_1(x)}^2\ \norm{P_n^{-1}(x) D_n(x) P_n(x) -\Id}_{\op,\mg_1(x)} \\
&=\lVert B(x)\rVert^{2}_{\op,\mg_1(x)}\  \lVert P_n^{-1}(x) D_n(x) P_n(x) -P_n^{-1}(x)P_n(x)\rVert_{\op,\mg_1(x)}\\
&=\lVert B(x)\rVert^{2}_{\op,\mg_1(x)}\ \lVert P_n^{-1}(x)[D_n(x)-\Id]P_n(x)\rVert_{\op,\mg_1(x)}\\
&\leq \lVert B(x)\rVert^{2}_{\op,\mg_1(x)}\ \lVert P_n^{-1}(x)\rVert_{\op,\mg_1(x)} \lVert D_n(x)-\Id \rVert_{\op,\mg_1(x)} \lVert P_n(x)\rVert_{\op,\mg_1(x)} \\ 
&= \lVert B(x)\rVert^{2}_{\op,\mg_1(x)}\  \lVert D_n(x)-\Id \rVert_{\op,\mg_1(x)} \\
&\leq  \e^2  \lVert D_n(x)-\Id \rVert_{\op,\mg_1(x)}
\end{align*}
since $\lVert P_n(x)\rVert_{\op,\mg_1(x)}=\lVert P_n^{-1}(x)\rVert_{\op,\mg_1(x)}=1$ and $\lVert B(x)\rVert _{\op,\mg_1(x)}\leq \e$ since $\rmet^\cM(\mg_1, \mg) \leq 1$.

We bound $\norm{D_n(x)-\Id}_{\op,\mg_1(x)}$ from above, independent of $x$.
For that, note since $D_n(x)$ is the eigenvalue matrix of $B^{-1}(x)B_n^2(x)B^{-1}(x)$,
\begin{equation}
\label{Eq:D_nEig} 
\e^{-2\epsilon}
\leq 
\lambda_i(D_n(x))
\leq 
\e^{2\epsilon},
\end{equation}
for $i\in \{1,...,(\dim\cM)\}$.
Furthermore, since $D_n(x) -\Id$ is symmetric and positive by \eqref{Eq:D_nEig}, we have that  $\norm{D_n(x) -\Id}_{\op,\mg_1(x)} = \lambda_{\max}(D_n(x) -\Id)$.
Clearly, this is a diagonal matrix $D_n(x) - \Id = \diag( D_n(x))_{1} - 1, \dots, (D_n(x)_{\dim\cM} - 1)$.
Thus, 
\[ 
\lambda_i(D_n(x)-\Id) 
= \lambda_i(D_n(x)) - 1
\leq   \e^{2\epsilon} - 1
\]
and therefore, $\rVert B^2_n(x)-B^2(x)\lVert_{\op,\mg_1(x)} \leq \e^{2}(\e^{2\epsilon} - 1)$.

Our computations so far have been in the operator norm. 
However, the metric $\mgt_1$ of $u = \sum_{ij} u_{ij} e^{i} \tensor e^{j} \in  \Tensors[2,0]_x\cM$, in an orthonormal frame $\set{e_j}$ at $x$ is given by 
\[ 
\modulus{u}_{\mgt_1(x)}^2 = \sum_{ij} \modulus{u_{ij}}^2.
\]
It is easy to see that this is the Frobenius norm and that 
\[ 
 \norm{\cdot}_{\op,\mg_1(x)} \leq \norm{\cdot}_{\mgt_1(x)} \leq (\dim \cM) \norm{\cdot}_{\op,\mg_1(x)}. 
\]

Therefore, 
\[ 
\modulus{B^2_n(x) - B^2(x)}_{\mgt_1(x)} \leq (\dim \cM) \rVert B^2_n(x)-B^2(x)\lVert_{\op,\mg_1(x)} \leq (\dim \cM) \e^{2}(\e^{2\epsilon} - 1).
\] 
The right hand side is independent of $x \in \Reg(\mg)$ and since $\cM \setminus \Reg(\mg)$ is a set of measure zero, we obtain that $\norm{B^2_n - B^2}_{\Lp{\infty}(\cM;\Tensors[2,0]\cM,\widetilde{\mg}_1)} \to 0$ as $n \to \infty$.
By application of  Lemma \ref{bundlemetriclem} on choosing $\cE = \Tensors[2,0]\cM$ and the smooth bundle metric $\widetilde{\mg}_1$ induced by $\mg_1$, we deduce that  $B^2 \in \Ck{0}(\cM;\Tensors[2,0]\cM)$. 
Since $\mg(x)[u,v] = \mg_1(x) [B(x)u,B(x)v] = \mg_1(x)[B(x)^2u,v]$, we conclude that $\mg \in \Met_{\Ck{0}}(\cM)$.
\end{proof}

\subsection{Smooth approximations of continuous metrics} 

In this subsection, we assert that every $\mg \in \Met_{\Ck{0}}(\cM)$ can be approximated by a sequence in $\Met_{\Ck{\infty}}(\cM)$.
This, together with the statement we obtained in the previous section, completes the proof of Theorem~\ref{Thm.limit.smooth.RRMs}.

First, we note the useful technical lemma which we utilise in the proof. 
\begin{lem}
\label{Lem:AlternativeClose} 
Suppose $\mg_t \in \Met(\cM)$ for $t \in (0, \delta)$ for some $\delta > 0$ and  $f_1, f_2: (0,\delta) \to (0,\infty)$ are functions satisfying: 
\begin{enumerate}[label=(\roman*)] 
\item \label{Lem:Alt:1} 
$f_1(t) \leq 1 \leq f_2(t)$ for all $t \in (0, \delta)$, 
\item \label{Lem:Alt:2} 
$t\mapsto f_1(t)$ is  decreasing as  $t$ increases,  
\item \label{Lem:Alt:3}
$t \mapsto f_2(t)$  increases as $t$ increases, and
\item \label{Lem:Alt:4} 
$f_1(t) \to 1$ and $f_2(t) \to 1$ as $t \to 0$. 
\end{enumerate}
If there exists $\mg_0 \in \Met(\cM)$ such that 
\[ 
f_1(t) \modulus{u}_{\mg_t(x)} \leq \modulus{u}_{\mg_0(x)} \leq f_2(t) \modulus{u}_{\mg_t(x)}
\] 
for all $u \in \tanb_x\cM$ for almost-every $x$, then $\rmet^\cM(\mg_t,\mg_0) \to 0$. 
\end{lem}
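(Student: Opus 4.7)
The plan is to unwind the hypothesis into the defining inequality for $\mg_t \sim_C \mg_0$ and then produce an explicit constant $C_t$ that tends to $1$ as $t\to 0$.

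First, I would rewrite the assumed two-sided bound. Dividing through by $f_1(t)$ and $f_2(t)$ (both strictly positive), the hypothesis
\[
f_1(t)\,\modulus{u}_{\mg_t(x)} \leq \modulus{u}_{\mg_0(x)} \leq f_2(t)\,\modulus{u}_{\mg_t(x)}
\]
is equivalent to
\[
\frac{1}{f_2(t)}\,\modulus{u}_{\mg_0(x)} \leq \modulus{u}_{\mg_t(x)} \leq \frac{1}{f_1(t)}\,\modulus{u}_{\mg_0(x)},
\]
which is the right form to match the closeness relation $\mg_t \sim_C \mg_0$ from \eqref{ccc}.

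Next, I would set $C_t := \max\!\cbrac{f_2(t),\, 1/f_1(t)}$. Since $f_1(t) \leq 1 \leq f_2(t)$ by hypothesis~\ref{Lem:Alt:1}, both arguments are at least $1$, hence $C_t \geq 1$. By construction $C_t^{-1} \leq 1/f_2(t)$ and $1/f_1(t) \leq C_t$, so
\[
C_t^{-1}\,\modulus{u}_{\mg_0(x)} \leq \modulus{u}_{\mg_t(x)} \leq C_t\,\modulus{u}_{\mg_0(x)}
\]
for all $u \in \tanb_x \cM$ and almost-every $x \in \cM$. Thus $\mg_t \sim_{C_t} \mg_0$, and Definition~\ref{DefRoughdistance} yields
\[
\rmet^\cM(\mg_t,\mg_0) \leq \log C_t = \max\!\cbrac{\log f_2(t),\, -\log f_1(t)}.
\]

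Finally, I would apply hypothesis~\ref{Lem:Alt:4}: since $f_1(t) \to 1$ and $f_2(t) \to 1$ as $t\to 0$, continuity of $\log$ at $1$ forces $\log f_2(t) \to 0$ and $-\log f_1(t) \to 0$, so $\log C_t \to 0$ and consequently $\rmet^\cM(\mg_t,\mg_0) \to 0$. I do not expect any genuine obstacle here; the monotonicity conditions \ref{Lem:Alt:2}--\ref{Lem:Alt:3} play no role in this particular argument (they ensure $C_t$ is itself monotone in $t$, which is likely convenient for downstream applications of the lemma rather than for the statement itself), and the proof is essentially just rearranging the given inequality and taking logarithms.
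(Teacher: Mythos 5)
Your proof is correct and follows essentially the same route as the paper: both simply rearrange the hypothesis into the closeness relation with an explicit constant tending to $1$ and take logarithms (the paper uses $C(t)=f_2(t)/f_1(t)$ where you use the marginally sharper $\max\{f_2(t),1/f_1(t)\}$, an immaterial difference). Your observation that the monotonicity hypotheses \ref{Lem:Alt:2}--\ref{Lem:Alt:3} are not needed for the conclusion is also accurate.
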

\begin{proof}
Note that by the condition~\ref{Lem:Alt:1}, we have that $1/f_1(t) \geq 1$ and $1/f_2(t) \leq 1$.
By~\ref{Lem:Alt:2}, $t \mapsto 1/f_1(t)$ is increasing in $t$ and by~\ref{Lem:Alt:3}, $t \mapsto 1/f_2(t)$ is decreasing in $t$.
Define $C:(0,\delta) \to (0, \infty)$ by
\[
C(t) := \frac{f_2(t)}{f_1(t)}.
\]
Clearly, $C(t) \geq 1$ and since it is the multiple of two increasing functions, it is also increasing.
\begin{align*} 
C(t)^{-1} \modulus{u}_{\mg_t(x)} =  \frac{f_1(t)}{f_2(t)} \modulus{u}_{\mg_t(x)} 
\leq f_1(t) \modulus{u}_{\mg_t(x)} 
&\leq \modulus{u}_{\mg_0(x)}  \\ 
&\leq f_2(t) \modulus{u}_{\mg_t(x)}
\leq
 \frac{f_2(t)}{f_1(t)} \modulus{u}_{\mg_t(x)} 
=
C(t) \modulus{u}_{\mg_t(x)}. 
\end{align*} 
Therefore,  $\rmet^\cM(\mg_t,\mg_0) \leq \log(C(t)) \to 0$ as $t \to 0$. 
\end{proof}

With this, we prove the following. 

\begin{proof}[Proof of Theorem~\ref{Thm.limit.smooth.RRMs} ``$\supset$'']
Let  $\mg \in \Met_{\Ck{0}}(\cM)$ and fix $x \in \cM$. 
By the application of the Gram-Schmidt process, we find an orthonormal basis $\{s_i\} \subseteq \tanb_x\cM$ such that 
\begin{equation*}
\mg(x)[s_i,s_j]=\delta_{ij}.
\end{equation*}
Let $(U,\psi)$ be a chart around $x$ and define a metric $\widetilde{\mg} = (\psi^{-1})^\ast \mg$ on $\psi(U) \subset \R^n$. 
It is worth to note that since $\mg \in \Met_{\Ck{0}}(\cM)$, $\widetilde{\mg} \in \Met_{\Ck{0}}( \psi(U))$. 
By pushing forward $\set{s_i} \subset \tanb_x \cM$, we obtain $\widetilde{s}_i = \psi_{\ast} s_i$ at $\tanb_{\psi(x)} \psi(U)$.
From construction,
\begin{align*}
\widetilde{\mg}(\psi(x))[\widetilde{s_i},\widetilde{s_j}]
&= (\psi^{-1})^*\mg(\psi(x))[(\psi)_*s_i,(\psi)_*s_j]\\
&=\mg(x)[(\psi^{-1})_*(\psi)_*s_i,(\psi^{-1})_*(\psi)_*s_j]\\
&=\mg(x)[s_i,s_j]\\
&=\delta_{ij}.
\end{align*}

Note that $\set{\widetilde{s}_i}$ is a basis at the point $\psi(x)$. 
We now extend this to a smooth basis $\set{\hat{s}_i}$ in all of $\tanb\psi(U)$ by parallel transporting $\set{\widetilde{s}_i}$ inside $\psi(U)$.
To explicitly construct this, fix the standard orthonormal basis $\set{e_k}$ of $\R^n$. 
Clearly, $\set{e_k} \subset \tanb\psi(U) \subset \tanb\R^n \cong \R^n$ defines a frame inside $\tanb\psi(U)$.
At $\psi(x) \in \psi(U)$, write $\widetilde{s}_i = \sum_{j=1}^k a_i^j e_j$.
We define the extended frame $\set{\hat{s}_i} \subset \tanb\psi(U)$ simply by
\[ 
\hat{s}_i(y)  := \sum_{j=1}^k a_i^j e_j. 
\]
This is clearly an extension as $\hat{s}_i(\psi(x)) = \widetilde{s}_i$ and $y \mapsto \hat{s}_i(y)$ is smooth. 

Now we construct a smooth metric $\hat{\mg}$ on $\psi(U)$ simply by asking for it to be orthonormal with respect to the frame $\hat{s}_i$.
That is, for $y \in \psi(U)$, define 
\begin{align*}
\hat{\mg}(y)[\hat{s}_i(y),\hat{s}_j(y)]
:=
\delta_{ij}.
\end{align*}
Since $y \mapsto \hat{s}_i(y)$ is smooth, so is $\hat{\mg}$.
Define the metric 
\[
\mh^U := \psi^*\hat{\mg},
\]
inside $U$. 
Clearly, since $\hat{\mg}$ is smooth, $\psi$ is smooth, so is $\mh^U$.

Let $s_k^U := \psi^{-1}_\ast \hat{s}_k$ and write $\mg_{ij}(x) := \mg(x)[ s_i^U(x), s_j^U(x)]$ and $\mh^U_{ij}(x) := \mh^U(x)[ s_i^U(x), s_j^U(x)]$.
By construction 
\begin{equation}
\label{Eq:Coeff}
\mh_{ij}(x) = \mh^U(x)[e_i, e_j]  = \hat{\mg}(\psi(x))[\hat{s}_i(\psi(x)), \hat{s}_j(\psi(x))] = \delta_{ij} = \mg(x)[e_i, e_j] = \mg_{ij}(x).
\end{equation}
Fixing $\epsilon > 0$, by continuity of $\mg$ and of $\mh^U$, we find a $\delta_1(x) > 0 $ such that for $z \in U_{\delta_1(x)}^x := \psi^{-1}(\Ball^{\R^n}(\psi(x),\delta_1(x)))$, 
\[
|\mg_{ij}(x) -\mg_{ij}(z)|
<\frac{\epsilon}{2}
\quad \text{ and }  \quad
|\mh_{ij}(x) -\mh_{ij}^{U} (z)| = 0 < \frac{\epsilon}{2}
\]
for all $z\in  U_{\delta_1(x)}^{x}$. 
For such $z\in  U_{\delta_1(x)}^{x}$, 
\begin{align*}
|\mg_{ij}(z)- \mh_{ij}^{U}(z)|
&=
|\mg_{ij}(z)- \mg_{ij}(x)+ \mg_{ij}(x)-\mh_{ij}^{U}(z)|\\
&
\leq 
|\mg_{ij}(z)- \mg_{ij}(x)|+|\mh_{ij}(x)-\mh_{ij}^{U}(z)|\\
&<
\frac{\epsilon}{2}+\frac{\epsilon}{2}
=
\epsilon,
\end{align*}
where the second line used $\mh_{ij}^U(x) = \mg_{ij}(x)$ from \eqref{Eq:Coeff}. 

Clearly $\set{U_{\delta_{1}(x)}^x}$ is a cover for $\cM$ and so fix a countable subcover $\set{U_i}$.
Let $\set{\eta^i}$ be a smooth partition of unity subordinate to $\set{U_i}$. 
Define  $\mg^{\epsilon}$ by
\[
\mg^{\epsilon}(x)
:=
\sum_{i=1}^{\infty} \eta^i(x) \mh^{U^i_\epsilon}(x),
\]
where $\mh^{U^i_\epsilon}$ is the metric we built for the larger set $U^i$ before. 
Clearly, $\mh^\epsilon \in \Met_{\Ck{\infty}}(\cM)$.

We claim that  $\rmet^\cM(\mh^\epsilon, \mg) \to 0$ as $\epsilon \to 0$. 
Fix $\epsilon < 1$ and fix $z \in \cM$. 
Suppose that $z \in U_{a}$ for some $a$.
Then, for $u \in \tanb_z \cM$ with respect to the pullback orthonormal basis of $\mh^{U^a_\epsilon}$,
\begin{align*} 
\modulus{u}^2_{\mh^{U^a_\epsilon}(z)}
&=
\sum_{ij} \mh^{U^a_\epsilon}_{ij}(z) u_i u_j \\
&= 
\sum_{ij} [\mh^{U^a_\epsilon}_{ij}(z) - \mg_{ij}(z)] u_i u_j  + \sum_{ij} \mg_{ij}(z) u_i u_j \\
&\leq 
\epsilon \sum_{ij} u_i u_j +  \modulus{u}_{\mg(z)}^2 \\
&\leq
 \epsilon \cbrac{\sum_{i} u_i^2}^{\frac12} \cbrac{\sum_{j} u_j^2}^{\frac12} + \modulus{u}_{\mg(z)}^2 \\ 
&= 
\epsilon \modulus{u}^2_{\mh^{U^a_\epsilon}(z)} + \modulus{u}_{\mg(z)}^2,
\end{align*}
since $\sum_{i} u_i^2 = \modulus{u}^2_{\mh^{U^a_\epsilon}(z)}$.
Therefore,  $(1 - \epsilon) \modulus{u}^2_{\mh^{U^a_\epsilon}(z)}\leq \modulus{u}_{\mg(z)}^2$ and
\[
(1 - \epsilon) \modulus{u}_{\mg^{\epsilon}(z)}^2
= \sum_{a} \eta^a(z) (1 - \epsilon) \modulus{u}^2_{\mh^{U^a_\epsilon}(z)}
\leq  
\modulus{u}_{\mg(z)}^2.
\]
Also, for the same $z \in U_a$,
\begin{align*}
\modulus{u}_{\mg(z)}^2 
&= \sum_{ij} \mg_{ij}(z) u_i u_j \\
&\leq \sum_{ij} \modulus{\mg_{ij}(z) - \mh_{ij}^{U^a_\epsilon}}(z) u_i u_j + \mh_{ij}^{U^a_\epsilon}(z) \\
&\leq \epsilon \sum_{ij} u_i u_j +  \modulus{u}^2_{\mh^{U^a_\epsilon}(z)} \\ 
&\leq (1 + \epsilon) \modulus{u}^2_{\mh^{U^a_\epsilon}(z)}
\end{align*} 
Then, 
\[ 
\modulus{u}_{\mg(z)}^2 
= \sum_{a} \eta^a(z) \modulus{u}_{\mg(z)}^2
\leq (1 + \epsilon) \sum_{a} \eta^a(z) \modulus{u}^2_{\mh^{U^a_\epsilon}(z)}
= \modulus{u}_{\mg^{\epsilon}(z)}^2. 
\]
Together, this proves that
\[ 
(1 - \epsilon) \modulus{u}_{\mg^{\epsilon}(z)}^2 
\leq  
\modulus{u}_{\mg(z)}^2 \leq 
(1 + \epsilon)
 \modulus{u}_{\mg^{\epsilon}(z)}^2. 
\]
Defining $f_1, f_2: (0,1) \to (0,\infty)$ by $f_1(t) = (1 - t)$ and $f_2(t) = (1 + t)$  and invoking Lemma~\ref{Lem:AlternativeClose}, we deduce that $\rmet^\cM(\mg^\epsilon, \mg) \to 0$ as $\epsilon \to 0$.  
\end{proof}

\section{Completeness}
\label{S:Completeness}
We have so far identified an important closed subset, the subset of continuous metrics $\Met_{\Ck{0}}(\cM)$ of $(\Met(\cM),\rmet^\cM)$.
Now, we proceed to show that $(\Met(\cM),\rmet^\cM)$ is a complete metric space. 

We do so by reducing the question to the completeness of $\Lp{\infty}(\cM;\Tensors[r,s]\cM,\mgt)$ for $\mg \in \Met(\cM)$ where $\mgt$ is the induced metric on $\Tensors[r,s]\cM$ from $\mg$.First, we verify this holds for $\mg \in \Met_{\Ck{\infty}}(\cM)$.
\begin{lem}
\label{vbundle-smooth-complete}
Let $\mg \in \Met_{\Ck{\infty}}(\cM)$ and let  $\mgt$ be the smooth bundle  metric induced by $\mg$  on $\Tensors[r,s]\cM$. 
Then the space $\Lp{\infty}(\cM;\Tensors[r,s]\cM,\mgt)$ is a Banach space. 
\end{lem}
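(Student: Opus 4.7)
The plan is to adapt the standard proof of completeness of scalar $\Lp{\infty}$ to the bundle setting, with smoothness of $\mgt$ entering only in the measurability of the limit section. First I would verify that $\norm{u}_{\Lp{\infty}(\cM;\Tensors[r,s]\cM,\mgt)} := \esssup_{x \in \cM} |u(x)|_{\mgt(x)}$ is genuinely a norm: non-negativity, homogeneity, and the triangle inequality transfer pointwise from the fibrewise norm $|\cdot|_{\mgt(x)}$, and definiteness modulo null sets is the usual quotient by almost-everywhere equivalence.

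Next, given a Cauchy sequence $\{u_n\}$, for each pair $(n,m)$ there is a null set $E_{n,m}$ outside of which $|u_n(x)-u_m(x)|_{\mgt(x)} \leq \norm{u_n-u_m}_{\Lp{\infty}}$. Let $E := \union_{n,m} E_{n,m}$, which is still a null set. For $x \notin E$, the sequence $\{u_n(x)\}$ is Cauchy in the finite-dimensional normed space $(\Tensors[r,s]_x\cM, |\cdot|_{\mgt(x)})$, hence converges to some $u(x)$; set $u(x) := 0$ on $E$. To see that $u$ is measurable, I would cover $\cM$ by trivialising charts $(U_\alpha, \Phi_\alpha)$ for $\Tensors[r,s]\cM$ and apply Gram--Schmidt to the smooth metric $\mgt$, exactly as in the proof of Lemma~\ref{bundlemetriclem}, to obtain a smooth local $\mgt$-orthonormal frame $\{t^\alpha_j\}$ on $U_\alpha$. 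Writing $u_n\rest{U_\alpha} = \sum_j C^{j,\alpha}_n\, t^\alpha_j$, the almost-everywhere pointwise convergence of $u_n \to u$ translates into almost-everywhere pointwise convergence of the scalar coefficients $C^{j,\alpha}_n \to C^{j,\alpha}$; each $C^{j,\alpha}_n$ is measurable, hence so is $C^{j,\alpha}$, and therefore $u\rest{U_\alpha} = \sum_j C^{j,\alpha}\, t^\alpha_j$ is measurable. Patching over the cover yields measurability of $u$ on $\cM$.

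Finally, given $\epsilon > 0$, choose $N$ so that $\norm{u_n - u_m}_{\Lp{\infty}} < \epsilon$ for all $n, m \geq N$. Fixing $n \geq N$ and $x \notin E$, sending $m \to \infty$ in $|u_n(x) - u_m(x)|_{\mgt(x)} < \epsilon$ and using continuity of the fibrewise norm yields $|u_n(x) - u(x)|_{\mgt(x)} \leq \epsilon$, so $\norm{u_n - u}_{\Lp{\infty}} \leq \epsilon$. In particular $u = u_N + (u - u_N) \in \Lp{\infty}(\cM;\Tensors[r,s]\cM,\mgt)$ and $u_n \to u$ in norm, which proves the claim. The only step requiring any care beyond the scalar proof is the measurability of $u$, and the smoothness of $\mgt$ (hence of the Gram--Schmidt frame) is precisely what makes this routine.
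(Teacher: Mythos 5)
Your proof is correct, and it organises the argument differently from the paper. The paper's proof first reduces to local trivialisations: it identifies $\Lp{\infty}(U_i;\Tensors[r,s]\cM,\mgt)$ with $\Lp{\infty}(\psi_i(U_i);\R^M)$ via a chart and an orthonormal basis, obtains a local limit $\xi^i$ on each $U_i$ from the known completeness of that space, checks that $\xi^i=\xi^j$ on overlaps, and only then patches the local limits into a global section before verifying membership in $\Lp{\infty}$ and norm convergence. You instead run the standard scalar $\Lp{\infty}$ completeness proof globally: discard a single countable union $E$ of null sets, observe that for $x\notin E$ the sequence $\set{u_n(x)}$ is Cauchy in the complete finite-dimensional fibre $(\Tensors[r,s]_x\cM, \modulus{\cdot}_{\mgt(x)})$, define the limit pointwise, and use charts only to establish measurability of the limit via coefficients in a smooth Gram--Schmidt frame. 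Your route buys a cleaner construction of the limit --- the patching and overlap-consistency step disappears entirely because the limit is defined pointwise from the outset --- at the cost of having to be slightly explicit about the exceptional null sets, which the paper handles implicitly through its ``regular point'' language in the later estimates (where the two proofs essentially coincide). A side remark: since completeness of the fibres needs only that $\mgt(x)$ be an inner product at almost every $x$, and measurability of the limit needs only a measurable frame, your argument is nearly regularity-agnostic and comes close to proving the rough-metric version (the paper's Lemma~\ref{vbundle-rough-complete}) directly, whereas the paper deliberately proves the smooth case first and upgrades via the endomorphism $B$ from Theorem~\ref{Thm:Stab}.
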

\begin{proof}
Fix an open cover of chart$(\psi_i, U_i)$ for $\cM$.
These are also local trivialisations for $\Tensors[r,s]\cM$.
Let $\set{\xi_n}$ be a Cauchy sequence in $\Lp{\infty}(\cM;\Tensors[r,s]\cM,\mgt)$. 
Inside each $U_j$, noting $\R^M = (\psi_i^{-1})^\ast \Tensors[r,s]\cM$ where $M = \rank \Tensors[r,s]\cM$,
\begin{align*}
\norm{(\psi_i^{-1})^\ast \xi_n\rest{U_i} - (\psi_i^{-1})^\ast \xi_m\rest{U_i}}_{\Lp{\infty}(\psi_i(U_i); \R^M, (\psi_i^{-1})^\ast\mgt)}
&=  
\norm{\xi_n - \xi_m}_{\Lp{\infty}(U_i; \Tensors[r,s]\cM, \mgt)}  \\ 
&\leq
\norm{\xi_n - \xi_m}_{\Lp{\infty}(\cM; \Tensors[r,s]\cM, \mgt)}. 
\end{align*}
Therefore, $\set{(\psi_i^{-1})^\ast \xi_n\rest{U_i}}$ is a Cauchy sequence in  $\Lp{\infty}(\psi_i(U_i); \R^M, (\psi_i^{-1})^\ast\mgt)$.
Since $(\R^M, (\psi_i^{-1})^\ast\mgt)) \cong (\R^M, \delta_{\R^M})$ by choosing an orthonormal basis for $(\psi_i^{-1})^\ast\mgt$, we have $\tilde{\xi}^i \in \Lp{\infty}(\psi_i(U_i); \R^M, (\psi_i^{-1})^\ast\mgt)$ such that $(\psi_i^{-1})^\ast \xi_n\rest{U_i} \to \tilde{\xi}_i$.
Clearly, $\xi^i = \psi_i^\ast \tilde{\xi}_i$ satisfies $\xi_n\rest{U_i} \to \xi^i$ inside $\Lp{\infty}(U_i; \Tensors[r,s]\cM, \mgt)$.

We claim that $\xi^i = \xi^j$ on $U_i \cap U_j \neq \emptyset$. 
For that, repeat the same procedure on $U_j \cap U_j$ to find $\xi_n\rest{U_i \cap U_j} \to \xi^{ij}$. 
Since $\norm{\cdot }_{\Lp{\infty}(U_i \cap U_j; \Tensors[r,s]\cM, \mgt)} \leq \norm{\cdot}_{\Lp{\infty}(U_i; \Tensors[r,s]\cM, \mgt)}$ and $\Lp{\infty}(U_i \cap U_j; \Tensors[r,s]\cM, \mgt)$ is Hausdorff, we have that $\xi^i = \xi^{ij} = \xi^j$.
Therefore, 
\[ 
\xi(x) := \xi^i (x)\quad x \in U_i
\]
is well-defined. 
Moreover, $\xi\rest{U_i} = \xi^i$, measurable in $U_i$, and therefore, $\xi$ is itself measurable.

Now we show that $\xi \in\Lp{\infty}(\cM; \Tensors[r,s]\cM, \mgt)$.
To that end, note that $\xi(x) = \lim_{n \to \infty} \xi_n(x)$ almost-everywhere pointwise.
Since $\set{\xi_n}$ is Cauchy in $\Lp{\infty}(\cM; \Tensors[r,s]\cM, \mgt)$, there exists $N \in \Na$ such that $\norm{\xi_n - \xi_m}_{\Lp{\infty}(\cM)} \leq 1$ for all $m,n \geq N$.
In particular, this means that for almost-every $x \in \cM$, 
\begin{equation} 
\label{Eq:Linfbd} 
\modulus{\xi_n(x)} \leq 1 + \modulus{\xi_N(x)} \leq 1 + \norm{\xi_N}_{\Lp{\infty}(\cM;\Tensors[r,s]\cM,\mgt)}. 
\end{equation} 
Let $x \in \cM$ be a regular point where $\modulus{\xi(x) - \xi_i(x)} \to 0$ as $i \to \infty$.
This means that for each $\epsilon > 0$, there exists $N'(\epsilon,x)$ such that $i \geq N'(\epsilon,x)$ implies $\modulus{\xi(x) - \xi_i(x)} < \epsilon$. 
The expression \eqref{Eq:Linfbd} is valid for all $n \geq N$ and hence, certainly for all $n \geq \max\set{N'(\epsilon,x), N(\epsilon)}$.
That means we can take the limit as $n \to \infty$ on the right hand side to obtain 
\begin{equation*} 
\modulus{\xi(x)} \leq 1 + \modulus{\xi_N(x)} \leq 1 + \norm{\xi_N}_{\Lp{\infty}(\cM; \Tensors[r,s]\cM,\mgt)}. 
\end{equation*}
This shows that $\xi \in \Lp{\infty}(\cM;\Tensors[r,s]\cM,\mgt)$. 

It remains to show that $\xi_n \to \xi$ uniformly almost-everywhere.
Fix $\epsilon >0$. 
It suffices to show that for $\epsilon > 0$, there exists $N = N(\epsilon) > 0$ such that for all $n \geq N$, $\modulus{\xi(x) - \xi_n(x)}_{\mgt(x)} < \epsilon$ for $x$-a.e.
Let $N = N(\epsilon) > 0$ be given from the Cauchy condition where for $m, n \geq N$, we have that $\norm{\xi_m - \xi_n}_{\Lp{\infty}(\cM;\Tensors[r,s]\cM,\mgt)} < \epsilon/2$.
At a regular point $x$ where we have pointwise convergence $\xi_m(x) \to \xi(x)$, choose $N'(\epsilon,x)$ such that for $m \geq N'$, we have $\modulus{\xi(x) - \xi_m(x)} < \epsilon/2$.
Letting $N'' := \max\set{N'(\epsilon,x), N(\epsilon)}$, for all $n \geq N$,
\begin{align*}
\modulus{\xi(x) - \xi_n(x)} 
\leq
\modulus{\xi(x) - \xi_{N''}(x)}  + \modulus{\xi_{N''}(x) - \xi_n(x)}  
< 
\frac\epsilon2 + \frac\epsilon2
 = \epsilon.
\end{align*}
Since $x \in \cM$ was chosen to be a regular point and the complement of such points is a set of measure zero, the proof is complete.
\end{proof}

We now upgrade this result so that it holds for general rough Riemannian metrics $\mg \in \Met(\cM)$.
\begin{lem}
\label{vbundle-rough-complete}
Let $\mg \in \Met(\cM)$ and $\mgt$ the induced metric from $\mg$ on $\Tensors[r,s]\cM$.
Then, $\Lp{\infty}(\cM;\Tensors[r,s]\cM,\mgt)$ is a Banach space.
\end{lem}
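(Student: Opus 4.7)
The plan is to reduce to the smooth case Lemma~\ref{vbundle-smooth-complete} by localising on charts where $\mg$ is comparable to a smooth metric. The proof of the smooth case only uses smoothness to trivialise the bundle metric locally to the Euclidean one; since every rough Riemannian metric enjoys this property \emph{locally} by Definition~\ref{Def:RRM}, the same scheme transfers once we replace ``covering by chart-trivialisations'' with ``covering by locally comparable chart-trivialisations''.

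First I would cover $\cM$ by a countable collection $\set{(U_i,\psi_i)}$ of locally comparable charts (as in Definition~\ref{Def:RRM}) that simultaneously trivialise $\Tensors[r,s]\cM$. For each $i$, local comparability and the fact that $\psi_i^{\ast}\delta$ is smooth give constants $C_i \geq 1$ with
\[
C_i^{-(r+s)} \modulus{\cdot}_{(\psi_i^{\ast}\delta)^{\sim}(x)} \leq \modulus{\cdot}_{\mgt(x)} \leq C_i^{r+s} \modulus{\cdot}_{(\psi_i^{\ast}\delta)^{\sim}(x)}
\]
for almost-every $x\in U_i$, where $(\psi_i^{\ast}\delta)^{\sim}$ is the induced smooth bundle metric on $\Tensors[r,s]\cM\rest{U_i}$. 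Consequently, the $\Lp{\infty}(U_i;\Tensors[r,s]\cM,\mgt)$- and $\Lp{\infty}(U_i;\Tensors[r,s]\cM,(\psi_i^{\ast}\delta)^{\sim})$-norms are equivalent on $U_i$.

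Next, given a Cauchy sequence $\set{\xi_n}$ in $\Lp{\infty}(\cM;\Tensors[r,s]\cM,\mgt)$, the restriction inequality $\norm{\cdot}_{\Lp{\infty}(U_i)} \leq \norm{\cdot}_{\Lp{\infty}(\cM)}$ shows that each $\set{\xi_n\rest{U_i}}$ is Cauchy in $\Lp{\infty}(U_i;\mgt)$, hence Cauchy in $\Lp{\infty}(U_i;(\psi_i^{\ast}\delta)^{\sim})$ by the equivalence above. Applying Lemma~\ref{vbundle-smooth-complete} to the open submanifold $U_i$ equipped with the smooth metric $\psi_i^{\ast}\delta$ produces a limit $\xi^i \in \Lp{\infty}(U_i;\Tensors[r,s]\cM,(\psi_i^{\ast}\delta)^{\sim}) = \Lp{\infty}(U_i;\Tensors[r,s]\cM,\mgt)$ (as sets). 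On any overlap $U_i \cap U_j$, both $\xi^i$ and $\xi^j$ are limits of $\set{\xi_n\rest{U_i \cap U_j}}$ in the Hausdorff space $\Lp{\infty}(U_i \cap U_j;\mgt)$, so $\xi^i = \xi^j$ almost-everywhere on the overlap. Setting $\xi(x) := \xi^i(x)$ for $x\in U_i$ yields a well-defined measurable section.

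The hard part is promoting the chart-wise convergence to a global $\Lp{\infty}(\cM;\mgt)$ bound and a global uniform rate; the local comparability constants $C_i$ are not bounded across $\cM$, so one cannot invoke them uniformly. This is handled precisely as in the last third of the proof of Lemma~\ref{vbundle-smooth-complete}: pick $N$ from the Cauchy condition so that $\norm{\xi_n-\xi_m}_{\Lp{\infty}(\cM;\mgt)} \leq 1$ for $m,n \geq N$, obtain the pointwise bound $\modulus{\xi_n(x)}_{\mgt(x)} \leq 1 + \norm{\xi_N}_{\Lp{\infty}(\cM;\mgt)}$ for almost-every $x\in\cM$, and pass to the limit $n\to\infty$ using the pointwise convergence $\xi_n(x) \to \xi(x)$ guaranteed chart-by-chart by Lemma~\ref{vbundle-smooth-complete}. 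This yields $\xi \in \Lp{\infty}(\cM;\Tensors[r,s]\cM,\mgt)$. An identical $\epsilon/2$-argument transferring the global Cauchy condition to an essentially-everywhere pointwise inequality, then passing to the limit in one index, gives $\norm{\xi_n - \xi}_{\Lp{\infty}(\cM;\mgt)} \to 0$, completing the proof.
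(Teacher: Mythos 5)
Your proof is correct and follows the same overall strategy as the paper's: localise to regions where $\mg$ is comparable to a smooth metric, invoke Lemma~\ref{vbundle-smooth-complete} there, patch the local limits via uniqueness of limits in Hausdorff spaces, and then use the \emph{global} Cauchy condition to obtain the global $\Lp{\infty}$ bound and uniform convergence. The only substantive difference is the localisation device. The paper exhausts $\cM$ by nested compacta $K_j$, builds a smooth metric $\mh_j^\infty$ on each, and invokes Theorem~\ref{Thm:Stab} together with Proposition~10 of \cite{BRough} to produce $B_j \in \Ell(K_j)$ conjugating $\mgt$ to the smooth bundle metric, so that the Cauchy sequence is transferred \emph{isometrically}. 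You instead work with a countable cover by locally comparable charts and use only the norm equivalence $\norm{\cdot}_{\mgt} \simeq \norm{\cdot}_{(\psi_i^\ast\delta)^\sim}$ supplied directly by Definition~\ref{Def:RRM}; this avoids the representation machinery entirely and is slightly more elementary, at the cost of tracking overlaps $U_i \cap U_j$ rather than nested inclusions $K_j \subset K_{j+1}$. Both versions correctly observe that the chart-dependent constants need never be uniform, since the final $\Lp{\infty}(\cM)$ estimates come from the global Cauchy condition combined with pointwise almost-everywhere convergence.
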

\begin{proof}
Suppose that $\{\xi_n\} \subset \Lp{\infty}(\cM;\Tensors[r,s]\cM,\mgt)$ is a Cauchy sequence of sections. 
Let $K_1 \subseteq K_2 \subseteq ...\subseteq K_k\subseteq ... $ be an exhaustion of $\cM$ by compact sets $K_j$ with nonempty interior.
Clearly,
\begin{equation}
\label{vbundle-rough-complete-Cau-loc}
\rVert \xi_n -\xi_m \lVert_{\Lp{\infty}(K_j,\Tensors[r,s]\cM,\mgt)} \leq \rVert \xi_n-\xi_m\lVert_{\Lp{\infty}(\cM;\Tensors[r,s]\cM,\mgt)} \to 0.    
\end{equation}
as $m, n \to \infty$ and therefore, we see that $\set{\xi_n\rest{K_j}}$ is a Cauchy sequence in $\Lp{\infty}(K_j;\Tensors[r,s]\cM,\mgt)$. 

Since $K_j$ is compact, there exists $\mh_j^\infty \in \Met_{\Ck{\infty}}(K_j)$ and from  Theorem \ref{Thm:Stab} a $\mh_j^\infty$-self-adjoint  $B^2_j \in \Ell(K_j)$ such that 
\[
\mg(x)[u,v]=\mh_j^{\infty}(x)[B^2_j(x)u,v]=h_j^{\infty}(x)[B_j(x)u,B_j(x)v].
\]
From  Proposition~10 in \cite{BRough}, we see that  $B_j \in \Lp{\infty}(U_j;\End(\tanb\cM),\mg)$ and $B_j^{-1} \in \Lp{\infty}(U_j;\End(\tanb\cM),\mg)$. 
Denoting the induced $\tilde{B}_j \in \Lp{0}(U_j;\End(\Tensors[r,s]\cM))$ satisfying $\mgt[u,v] = \tilde{\mh}_j^\infty[\tilde{B}_j u, \tilde{B}_j v]$, we have that $\tilde{B}_j \in \Lp{\infty}(U_j;\End(\Tensors[r,s]\cM),\mgt)$ and $\tilde{B}_j^{-1} \in \Lp{\infty}(U_j;\End(\Tensors[r,s]\cM),\mgt)$.

On $K_j$, as $m,n\to \infty$, 
\[
\rVert \tilde{B}_j\xi_n-\tilde{B}_j\xi_m\lVert_{\Lp{\infty}(K_j,\Tensors[r,s]\cM,\mht_j^{\infty})}
=
\rVert \xi_n-\xi_m \lVert_{\Lp{\infty}(K_j;\Tensors[r,s]\cM,\mgt)} \to 0.
\]
That is, $\set{\tilde{B}_j\xi_n}$ is a Cauchy sequence of in $\Lp{\infty}(K_j,\Tensors[r,s]\cM,\mht_j^{\infty})$. 
Since $\mgt_j^\infty$ is smooth, by Lemma~\ref{vbundle-smooth-complete}, there exists $\tilde{\xi}^j \in  \Lp{\infty}(K_j,\Tensors[r,s]\cM,\mht_j^{\infty})$ such that $\tilde{B}_j \xi_n \to \tilde{\xi}^j$.

Let $\xi^j := B_j^{-1} \tilde{\xi}^j$. 
Clearly, $\xi^j \in \Lp{\infty}(K_j;\Tensors[r,s]\cM,\mgt)$ and we see that $\xi_n\rest{K_j} \to \xi^j$. 
We show that $\xi^{j+1}\rest{K_j} = \xi_j$.
Fix $\epsilon > 0$.
Since $\xi_n \to \xi^j$ inside $K_j$, we have that there is an $N$ such that $\norm{\xi^{a} - \xi_n}_{\Lp{\infty}(K_{a}; \Tensors[r,s]\cM, \mgt)} < \epsilon/2 $ for $a = j, j+1$ and $n \geq N$.
Therefore, 
\begin{align*}  
\norm{\xi^{j+1} - \xi^{j}}_{\Lp{\infty}(K_{j}; \Tensors[r,s]\cM, \mgt)}
&\leq 
 \norm{\xi^{j+1} - \xi_N}_{\Lp{\infty}(K_{j}; \Tensors[r,s]\cM, \mgt)} 
+
 \norm{\xi^{j} - \xi_N}_{\Lp{\infty}(K_{j}; \Tensors[r,s]\cM, \mgt)} \\
&\leq
 \norm{\xi^{j+1} - \xi_N}_{\Lp{\infty}(K_{j+1}; \Tensors[r,s]\cM, \mgt)} 
+
 \norm{\xi^{j} - \xi_N}_{\Lp{\infty}(K_{j}; \Tensors[r,s]\cM, \mgt)} \\ 
&\leq 
\frac{\epsilon}{2} +   \frac{\epsilon}{2} < \epsilon
\end{align*}
Since $\epsilon > 0$ was arbitrary, we conclude that $\xi^{j+1}\rest{K_j}  = \xi^j$.
Therefore,
\[
\xi(x) := \xi^j(x) \quad x \in U^j.
\]
is well-defined.

Now, we show that $\xi \in {\Lp{\infty}(\cM; \Tensors[r,s]\cM, \mgt)}$.
By the Cauchy criterion for $\set{\xi_n}$, we can find  $N = N(1) > 0$ such that for all $m,n \geq N$, we have $\modulus{\xi_n(x) - \xi_m(x)}_{\mgt(x)} \leq 1$ for almost-every $x \in \cM$.
Noting that  $\xi_n(x) \to \xi(x)$ pointwise $x$-almost-everywhere since $\xi_n \to \xi^j$ in $\Lp{\infty}(U_j; \Tensors[r,s]\cM,\mgt)$, let  $x \in \cM$ a regular point (where $\xi_n(x) \to \xi(x)$ pointwise) and let $K_j \subset \cM$ such that $x \in K_j$. 
Then, 
\begin{align*} 
\modulus{\xi_n(x)}_{\mgt(x)} 
&\leq 
\modulus{\xi_n(x) - \xi_N(x)} + \modulus{\xi_N(x)} \\
&\leq 
\modulus{\xi_n(x) - \xi_N(x)} +\norm{\xi_N}_{\Lp{\infty}(\cM; \Tensors[r,s]\cM, \mgt)}  \\
&\leq 
1 + \norm{\xi_N}_{\Lp{\infty}(K_{j+1}; \Tensors[r,s]\cM, \mgt)}
\end{align*}
for all $n \geq N$.
Therefore, we can take the limit a $n \to \infty$ to conclude that $\modulus{\xi(x)}_{\mgt(x)} \leq 1 + \norm{\xi_N}_{\Lp{\infty}(K_{j+1}; \Tensors[r,s]\cM, \mgt)}$ at every regular point. 
Hence, $\xi \in {\Lp{\infty}(\cM; \Tensors[r,s]\cM, \mgt)}$.

To finish the proof, we show that $\xi_n \to \xi$ in $\Lp{\infty}(\cM; \Tensors[r,s]\cM, \mgt)$ by showing that $\xi_n(x) \to \xi(x)$ uniformly almost-everywhere in $x \in \cM$.
Fix $\epsilon > 0$ and let $N = N(\epsilon)$ from the Cauchy criterion satisfying for all $m, n \geq N$,  $\modulus{\xi_n(x) - \xi_m(x)}_{\mgt(x)} <  \epsilon/2$ for almost-every $x \in \cM$. 
By pointwise convergence, we can find $N'(\epsilon, x)> 0$ such that for $m \geq N'$ implies $\modulus{\xi(x) - \xi_m(x)}_{\mgt(x)} <  \epsilon/2$
Again, fix a regular point $x \in \cM$ where this holds.
Then, for $n \geq N$ and choosing $m := \max\set{N, N'}$, 
\[
\modulus{\xi(x) - \xi_n(x)}_{\mgt(x)} 
\leq 
\modulus{\xi(x) - \xi_m(x)}_{\mgt(x)} + \modulus{\xi_m(x) - \xi_n(x)}_{\mgt(x)}
\leq \frac\epsilon2 + \frac\epsilon2 
< 
\epsilon.  
\]
This is a uniform estimate for almost-every $x \in \cM$ and therefore, $\Lp{\infty}(\cM; \Tensors[r,s]\cM, \mgt)$ is a Banach space.
\end{proof}

As expected, the $\Lp{\infty}$-spaces induced by rough Riemannian metrics of smooth subbundles of $\Tensors[r,s]\cM$ also are Banach spaces.
\begin{cor} 
Let $\cE \subset \Tensors[r,s]\cM$ be a smooth subbundle of $\cM$.
Then for $\mg \in \Met(\cM)$ and $\mgt$ the induced bundle metric on $\Tensors[r,s]\cM$, the space $\Lp{\infty}(\cM; \cE, \mgt)$ is a Banach space.
\end{cor}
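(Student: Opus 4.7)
The plan is to realise $\Lp{\infty}(\cM;\cE,\mgt)$ as a closed subspace of the Banach space $\Lp{\infty}(\cM;\Tensors[r,s]\cM,\mgt)$ given by Lemma~\ref{vbundle-rough-complete}. Since closed subspaces of Banach spaces are Banach spaces, this suffices.

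First, I would take a Cauchy sequence $\set{\xi_n} \subset \Lp{\infty}(\cM;\cE,\mgt)$. Because $\cE \subset \Tensors[r,s]\cM$ is a smooth subbundle, the inclusion of sections is isometric for the pointwise $\mgt$-norm, and so $\set{\xi_n}$ is also Cauchy in the ambient space $\Lp{\infty}(\cM;\Tensors[r,s]\cM,\mgt)$. By Lemma~\ref{vbundle-rough-complete}, there exists $\xi \in \Lp{\infty}(\cM;\Tensors[r,s]\cM,\mgt)$ such that $\xi_n \to \xi$ in the ambient $\Lp{\infty}$-norm.

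Next, I would show $\xi \in \Lp{\infty}(\cM;\cE,\mgt)$, i.e.\ $\xi(x) \in \cE_x$ for almost-every $x \in \cM$. Inspecting the proof of Lemma~\ref{vbundle-rough-complete}, the convergence $\xi_n \to \xi$ in $\Lp{\infty}$ is uniform almost-everywhere, hence in particular pointwise $x$-almost-everywhere. Since $\cE$ is a smooth subbundle, each fibre $\cE_x$ is a (finite-dimensional, hence closed) linear subspace of $(\Tensors[r,s]\cM)_x$. For every regular point $x$ at which $\xi_n(x) \to \xi(x)$, the sequence $\set{\xi_n(x)} \subset \cE_x$ and its limit $\xi(x)$ lies in the closure of $\cE_x$, which is $\cE_x$ itself. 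Thus $\xi(x) \in \cE_x$ almost-everywhere.

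The argument is essentially a routine closedness check, so there is no real obstacle; the only mild subtlety is that one must use the almost-everywhere pointwise convergence extracted from the $\Lp{\infty}$-convergence (rather than, say, passing to a subsequence as one does in $\Lp{p}$ for $p<\infty$) together with the fibrewise closedness of a smooth subbundle. Combining the two displays, $\Lp{\infty}(\cM;\cE,\mgt)$ is a closed subspace of the Banach space $\Lp{\infty}(\cM;\Tensors[r,s]\cM,\mgt)$, and is therefore itself a Banach space.
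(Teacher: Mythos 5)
Your proof is correct, and it reaches the conclusion by a different mechanism than the paper. You verify closedness of $\Lp{\infty}(\cM;\cE,\mgt)$ in $\Lp{\infty}(\cM;\Tensors[r,s]\cM,\mgt)$ directly: a Cauchy sequence of $\cE$-valued sections converges in the ambient space, the $\Lp{\infty}$-convergence gives pointwise convergence off a single null set, and each fibre $\cE_x$ is a closed linear subspace of $\Tensors[r,s]_x\cM$, so the limit is again $\cE$-valued almost-everywhere. (One small point worth making explicit: you should discard the countable union of the null sets on which the $\xi_n$ fail to be $\cE$-valued, which is still null.) The paper instead introduces the fibrewise $\mgt$-orthogonal projection $\pi_x:\Tensors[r,s]_x\cM\to\cE_x$, observes that it induces a bounded idempotent on the ambient $\Lp{\infty}$-space with range $\Lp{\infty}(\cM;\cE,\mgt)$, and invokes the fact that bounded projections in Banach spaces have closed range. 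Your route is more elementary in that it never needs to construct or bound the projection (and in particular does not rely on the metric to split the bundle), only the fibrewise closedness of the subbundle; the paper's route is shorter on the page and yields the additional structural information that $\Lp{\infty}(\cM;\cE,\mgt)$ is a complemented subspace. Both are valid proofs of the corollary.
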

\begin{proof}
Let $\pi_x: \Tensors[r,s]_x\cM \to  \cE_x$ be the fibrewise projection and let $\pi: \Tensors[r,s]\cM \to \cE$ be the total projection.
Clearly, $\norm{\pi u}_{\Lp{\infty}(\cM;\Tensors[r,s]\cM,\mgt)} \leq \norm{u}_{\Lp{\infty}(\cM;\Tensors[r,s]\cM,\mgt)}$ and $\pi$ is an idempotent on $\Lp{\infty}(\cM;\Tensors[r,s]\cM,\mgt)$ as $\pi_x$ is a fibrewise idempotent.  
This defines a bounded projection in $\Lp{\infty}(\cM;\Tensors[r,s]\cM,\mgt)$ with $\Lp{\infty}(\cM;\cE,\mgt) = \pi \Lp{\infty}(\cM;\Tensors[r,s]\cM,\mgt)$.
Since bounded projections in Banach spaces have closed ranges, we conclude that $\Lp{\infty}(\cM;\cE,\mgt)$ is a closed subspace of a Banach space and hence, complete.
\end{proof} 

Via this, we prove the following. 
\begin{prop}
\label{Prop:Complete} 
The space $(\Met(\cM),\rmet^\cM)$ is a complete metric space. 
\end{prop}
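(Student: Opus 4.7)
The plan is to reduce completeness of $(\Met(\cM),\rmet^\cM)$ to the Banach-space completeness of Lemma~\ref{vbundle-rough-complete} by representing all metrics in a fixed component as $\mg_1$-self-adjoint positive endomorphism fields, via Theorem~\ref{Thm:Stab}~\ref{Thm:Stab:iii}. Let $\set{\mg_n} \subset \Met(\cM)$ be $\rmet^\cM$-Cauchy. Since Cauchy sequences in an extended metric are eventually at finite distance from one another, after discarding finitely many terms I may assume $\rmet^\cM(\mg_n, \mg_1) \leq M$ for some fixed $M < \infty$ and all $n$, so in particular $\mg_n \in \Comp(\mg_1)$. Theorem~\ref{Thm:Stab}~\ref{Thm:Stab:iii} supplies unique $\mg_1$-self-adjoint positive $C_n \in \Ell(\cM)$ such that
\[
\mg_n(x)[u,v] = \mg_1(x)[C_n(x)u, v],
\]
and the remark following Theorem~\ref{Thm:Stab} (via Proposition~10 of \cite{BRough}) gives $C_n, C_n^{-1} \in \Lp{\infty}(\cM;\Sym\End(\tanb\cM), \widetilde{\mg_1})$, uniformly bounded by $e^{2M}$.

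The next step is to translate the $\rmet^\cM$-Cauchy condition into an $\Lp{\infty}$-Cauchy condition for $\set{C_n}$. Squaring the defining inequality of $\mg_n \sim_{C} \mg_m$ yields $e^{-2\epsilon} C_m \leq C_n \leq e^{2\epsilon} C_m$ in the $\mg_1$-self-adjoint positive cone whenever $\rmet^\cM(\mg_n, \mg_m) < \epsilon$. Setting $K_{nm} := C_m^{-1/2} C_n C_m^{-1/2}$, which is $\mg_1$-self-adjoint positive with spectrum in $[e^{-2\epsilon}, e^{2\epsilon}]$, and writing $C_n - C_m = C_m^{1/2}(K_{nm} - I)C_m^{1/2}$, I obtain the pointwise bound
\[
\norm{C_n - C_m}_{\op, \mg_1(x)} \leq e^{2M}(e^{2\epsilon} - 1)
\]
uniformly in $x$. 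Using the equivalence between the operator and Frobenius norms on fibres of the finite-rank bundle $\Sym\End(\tanb\cM)$, this shows that $\set{C_n}$ is Cauchy in $\Lp{\infty}(\cM;\Sym\End(\tanb\cM), \widetilde{\mg_1})$.

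Applying the corollary following Lemma~\ref{vbundle-rough-complete} to the smooth subbundle $\Sym\End(\tanb\cM) \subset \Tensors[1,1]\cM$ with the rough bundle metric $\widetilde{\mg_1}$, I obtain $C \in \Lp{\infty}(\cM;\Sym\End(\tanb\cM), \widetilde{\mg_1})$ with $C_n \to C$. Since $C_n \to C$ uniformly almost-everywhere, the two-sided bound $e^{-2M} I \leq C_n \leq e^{2M} I$ passes to the limit, giving $C$ positive with $C, C^{-1} \in \Lp{\infty}$, so $C \in \Ell(\cM)$. Define $\mg(x)[u,v] := \mg_1(x)[C(x)u, v]$; Theorem~\ref{Thm:Stab}~\ref{Thm:Stab:i} then yields $\mg \in \Met(\cM)$.

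Finally, to conclude $\rmet^\cM(\mg_n, \mg) \to 0$, I repeat the conjugation trick with $C$ in place of $C_m$: the operator $L_n := C^{-1/2} C_n C^{-1/2}$ is $\mg_1$-self-adjoint positive, and its essential spectrum is confined to $[e^{-\eta_n}, e^{\eta_n}]$ for some $\eta_n \to 0$ determined by $\norm{C_n - C}_{\op, \mg_1} \cdot \norm{C^{-1}}_{\op, \mg_1} \leq e^{2M}\norm{C_n - C}_{\op, \mg_1}$. Unsquaring yields $\mg_n \sim_{e^{\eta_n/2}} \mg$, hence $\rmet^\cM(\mg_n, \mg) \leq \eta_n/2 \to 0$. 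The principal technical point is the systematic dictionary between the log-multiplicative structure of $\rmet^\cM$ and the additive $\Lp{\infty}$-operator-norm structure of $\Sym\End(\tanb\cM)$, implemented through the symmetric conjugation $A \mapsto C_m^{-1/2} A C_m^{-1/2}$; once that is in place, the completeness input is entirely furnished by Lemma~\ref{vbundle-rough-complete}.
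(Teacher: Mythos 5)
Your argument is correct and follows essentially the same route as the paper: represent each $\mg_n$ as $\mg_1[C_n\,\cdot,\cdot]$ with $C_n$ positive and $\mg_1$-self-adjoint via Theorem~\ref{Thm:Stab}, convert the multiplicative $\rmet^\cM$-Cauchy condition into an additive $\Lp{\infty}$ operator-norm Cauchy condition by symmetric conjugation, invoke the completeness of Lemma~\ref{vbundle-rough-complete}, and undo the dictionary to recover $\rmet^\cM$-convergence to the limit metric. The one point to tidy is that $\Sym\End(\tanb\cM)$ taken with respect to a rough $\mg_1$ is only a measurable, not a smooth, subbundle, so the corollary for smooth subbundles does not literally apply; instead apply Lemma~\ref{vbundle-rough-complete} to all of $\Tensors[1,1]\cM$ and note, as the paper does, that $\mg_1$-self-adjointness survives the $\Lp{\infty}$ limit.
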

\begin{proof}
Let $\{\mg_n\}$ be a Cauchy sequence in $\Met(\cM)$. 
That is, for any $\epsilon>0$, there exists $N = N(\epsilon)\in \mathbb{N}$ such that for all $m, n \geq N$, we have $\rmet^\cM(\mg_n,\mg_m)<\epsilon$. 
Rearranging this sequence by throwing away a finite number of initial terms, without loss of generality, we can assume that for all $m, n \geq 1$, 
\[
\rmet^\cM(\mg_n,\mg_m)<1.
\]
Therefore $\mg_n \in \Comp(\mg_1)$ and by Theorem~\ref{Thm:Stab} and Proposition~10~\cite{BRough}, there exists  
a $\mg_1$-self-adjoint endomorphism $B_m^2 \in \Lp{\infty}(\cM;\End(\tanb\cM),\mg_1)$  such that $B_m^{-2} \in \Lp{\infty}(\cM;\End(\tanb\cM),\mg_1)$  and 
\[
\mg_m(x)(u,u)=\mg_1(x)(B_m(x)u, B_m(x)u).
\]
Since $\rmet^\cM(\mg_m,\mg_1) < 1$, $\e^{-1} \rvert u \lvert_{\mg_1(x)}\leq \rvert u \lvert_{\mg_m(x)} \leq e \rvert u \lvert_{\mg_1(x)}$ and therefore, 
\[
\e^{-1} \rvert u \lvert_{\mg_1(x)}\leq \rvert B_m(x)u \lvert_{\mg_1(x)} \leq e \rvert u \lvert_{\mg_1(x)}.
\]
This provides us with the following two-sided uniform bound 
\begin{equation*} 
\label{Eq:2sidedbd} 
\e^{-1} \leq \norm{B_m(x)}_{\op,\mg_1(x)}\leq \e.
\end{equation*} 
on the matrix norm with respect to the metric $\mg_1$.

For $0< \epsilon <1,$ there exists $N = N(\epsilon)\in \mathbb{N}$ such that $\rmet^\cM(\mg_m,\mg_n)<\epsilon,$ for all $n,m \geq N.$ Thus, we have the following local comparability inequality
\[\e^{-\epsilon} \rvert u\lvert_{\mg_m(x)}\leq \rvert u \lvert_{\mg_n(x)}\leq \e^{\epsilon} \rvert u\lvert_{\mg_m(x)}, \] 
and therefore  
\[
\e^{-\epsilon}
\leq 
\lVert B_n(x)B_m^{-1}(x) \rVert_{\op,\mg_1(x)}
\leq \e^{\epsilon}.
\]
By employing the same argument that has been in Section~\ref{S: SmoothRRM} to prove Theorem~\ref{Thm.limit.smooth.RRMs}, we compute 
\begin{equation*}\label{est.norm.Bn-Bm}
\lVert B_n^2(x)- B_m^2(x) \rVert_{\op, \mg_1(x)}\leq \e^2 (\e^{2\epsilon} - 1).
\end{equation*}
Identify $\End(\tanb\cM) = \cotanb\cM \tensor \tanb\cM = \Tensors[1,1]\cM$. 
The induced norm $\mgt_1$ on $\cotanb\cM \tensor \tanb\cM$ is readily seen to be the Frobenius norm by performing a calculation in an orthonormal frame at a regular point $x \in \cM$. 
Therefore, as in the proof of Theorem~\ref{Thm.limit.smooth.RRMs}, we obtain that
\begin{equation*}
\label{Eq:RealNormEst}
\modulus{B_n^2(x)- B_m^2(x)}_{\mgt_1(x)} \leq (\dim \cM) \e^2 (\e^{2\epsilon} - 1).
\end{equation*}
This shows that $\set{B_n^2}$ is a Cauchy sequence in  $\Lp{\infty}(\cM;\Tensors[1,1]\cM,\mgt_1)$.
Invoking Lemma~\ref{vbundle-rough-complete-Cau-loc}, we obtain some $B' \in \Lp{\infty}(\cM;\Tensors[1,1]\cM,\mg_1)$ such that $B_n^2 \to B'$ in $\Lp{\infty}(\cM;\Tensors[1,1]\cM,\mgt_1)$.
Also, since $(B_n^2)^{\ast,\mgt} = B_n^2$ almost-everywhere pointwise, we obtain that $(B')^{\ast,\mg_1} = B'$ almost-everywhere pointwise.

On replacing $B_m$ by $B_m^{-1}$ in \eqref{Eq:2sidedbd} and replicating this argument, we find that $\set{B_n^{-2}}$ is a Cauchy sequence in $\Lp{\infty}(\cM;\Tensors[1,1]\cM,\mgt_1)$ and therefore, $(B')^{-1} \in \Lp{\infty}(\cM;\Tensors[1,1]\cM,\mgt_1)$.
Therefore, we let $B := \sqrt{B'}$ pointwise-almost everywhere and we have shown that $B_n^2 \to B^2$ and $B_n^{-2} \to B^{-2}$ in $\Lp{\infty}(\cM;\Tensors[1,1]\cM,\mgt_1)$.
 
Define 
\[
\mg_\infty(x)[u,u]
:=
\mg_1(x)[B(x)u,B(x)u]
=
\mg_1(x)[B^2(x)u,u],
\]
which is readily verified to be a rough Riemannian metric. 
We claim that the sequence $\rmet^\cM(\mg_n,\mg_\infty) \to 0$ as $n \to \infty$. 

First, note at a regular point $x \in \cM$, and all $u \in \tanb_x\cM$, 
\begin{align*}
\modulus{u}_{\mg_n(x)}^2 
&= \mg_1(x)[B_n^2(x)u, u] \\
&= \mg_1(x)[(B_n^2(x) - B^2(x))u, u] +\mg_1(x)[B^2(x)u, u] \\
&\leq  \norm{B_n^2(x) - B^2(x)}_{\op,\mg_1(x)} \modulus{u}_{\mg_1(x)}^2 + \modulus{u}_{\mg_\infty(x)}^2 \\ 
&\leq  \norm{B_n^2(x) - B^2(x)}_{\op,\mg_1(x)} \modulus{B^{-1}(x) B(x)u}_{\mg_1(x)}^2 + \modulus{u}_{\mg_\infty(x)}^2 \\ 
&\leq \cbrac{\e^2(\e^{2\epsilon} - 1) \norm{B^{-1}(x)}_{\op,\mg_1(x)}^2 + 1}\ \modulus{u}_{\mg_\infty(x)}^2 \\ 
&\leq \cbrac{(\dim \cM)\e^2(\e^{2\epsilon} - 1) \norm{B^{-1}}^2_{\Lp{\infty}(\cM;\Tensors[1,1]\cM,\mgt_1)} + 1} \modulus{u}_{\mg_\infty(x)}^2,
\end{align*}
where we obtain a dimensional constant from changing from the operator norm on $B^{-1}$ to the Frobenius norm before taking a supremum. 
Similarly,
\begin{align*}
\modulus{u}_{\mg_\infty(x)}^2 
&= \mg_1(x)[(B^2(x) - B_n^2(x))u, u] +\mg_1(x)[B_n^2(x)u, u] \\
&\leq \norm{B_n^2(x) - B^2(x)}_{\op,\mg_1(x)} \modulus{B^{-1}(x) B(x) u}_{\mg_1(x)}^2 +\modulus{u}_{\mg_n(x)}^2  \\ 
&\leq \cbrac{(\dim \cM)\e^2(\e^{2\epsilon} - 1) \norm{B^{-1}}^2_{\Lp{\infty}(\cM;\Tensors[1,1]\cM,\mgt_1)}} \modulus{u}_{\mg_\infty(x)}^2 + \modulus{u}_{\mg_n(x)}^2
\end{align*}
Subtracting the first term from both sides and factoring out $\modulus{u}_{\mg_\infty(x)}^2$, and taking square roots, we obtain 
\begin{align*}
&\cbrac{1 - (\dim \cM)\e^2(\e^{2\epsilon} - 1) \norm{B^{-1}}^2_{\Lp{\infty}(\cM;\Tensors[1,1]\cM,\mgt_1)}}^{\frac12} \modulus{u}_{\mg_\infty(x)}
\leq 
\modulus{u}_{\mg_n(x)} \\ 
&\qquad\qquad\leq 
\cbrac{(\dim \cM)\e^2(\e^{2\epsilon} - 1) \norm{B^{-1}}^2_{\Lp{\infty}(\cM;\Tensors[1,1]\cM,\mgt_1)} + 1}^{\frac12} \modulus{u}_{\mg_\infty(x)}.
\end{align*}
Since the term $(\dim \cM)\e^2(\e^{2\epsilon} - 1) \norm{B^{-1}}^2_{\Lp{\infty}(\cM;\Tensors[1,1]\cM,\mgt_1)} \to 0$ as $\epsilon \to 0$ continuously, we can find a $\delta > 0$ to define functions $f_1,f_2:(0,\delta) \to (0,\infty)$ by: 
\begin{align*} 
f_1(\epsilon) &:= \cbrac{1 - (\dim \cM)\e^2(\e^{2\epsilon} - 1) \norm{B^{-1}}^2_{\Lp{\infty}(\cM;\Tensors[1,1]\cM,\mgt_1)}}^{\frac12} \\
f_2(\epsilon) &:= \cbrac{(\dim \cM)\e^2(\e^{2\epsilon} - 1) \norm{B^{-1}}^2_{\Lp{\infty}(\cM;\Tensors[1,1]\cM,\mgt_1)} + 1}^{\frac12}. 
\end{align*}
Clearly $f_1$ and $f_2$ satisfy the hypothesis of Lemma~\ref{Lem:AlternativeClose} (rewriting the sequence to fit into the form of this lemma), we can conclude that $\rmet^\cM(\mg_{n},\mg_{\infty}) \to 0$ as $n \to \infty$.
Therefore, $(\Met(\cM),\rmet^\cM)$ is a complete metric space. 
\end{proof}

\section{Connectedness}
\label{S:Connectedness}

Recall the subset $\Comp(\mg)$ for each $\mg \in \Met(\cM)$.
In this section, we provide a proof of Theorem~\ref{Thm:CompConnected} and Theorem~\ref{Thm_com_iff_conn}.
The first states, as the notation suggests, that $\Comp(\mg)$ is a connected component.
In fact, we show a stronger property that $\Comp(\mg)$ is path-connected.
The latter theorem  characterises compactness of $\cM$ in terms of connectedness of $(\Met(\cM),\rmet^\cM)$.

\subsection{Path-connectedness}
Clearly, we can see that $\Comp(\mg)$ arises as equivalence classes with respect to the relation $\mg_1 \sim \mg_2$ if $\rmet^\cM(\mg_1,\mg_2) < \infty$.
We begin our discussion around connectedness by first illustrating that each $\Comp(\mg)$ is both open and closed simultaneously.
\begin{lem}
\label{lem_closed_open}
For  $\mg \in \Met(\cM)$, the subset $\Comp(\mg)$ is open and closed. 
\end{lem}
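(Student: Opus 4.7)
The plan is to derive both openness and closedness from a single observation: the triangle inequality for the extended metric $\rmet^\cM$ (established in Proposition~\ref{Prop:RoughExtendedMetric}) forces the relation ``$\rmet^\cM(\mg,\mh)<\infty$'' to be transitive, so the equivalence classes $\Comp(\mg)$ partition $\Met(\cM)$ into clopen pieces in essentially the same way that cosets of a subgroup do in a topological group.

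For openness, I would fix $\mh \in \Comp(\mg)$ and take any $\mh'$ with $\rmet^\cM(\mh,\mh')<1$ (or indeed any finite radius). The triangle inequality then gives
\[
\rmet^\cM(\mg,\mh') \leq \rmet^\cM(\mg,\mh) + \rmet^\cM(\mh,\mh') < \infty,
\]
so the open $\rmet^\cM$-ball about $\mh$ is contained in $\Comp(\mg)$; hence $\Comp(\mg)$ is open.

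For closedness, I would show that the complement $\Met(\cM) \setminus \Comp(\mg) = \{\mh : \rmet^\cM(\mg,\mh) = \infty\}$ is open by the same mechanism. If $\mh$ lies in this complement and $\rmet^\cM(\mh,\mh')<\infty$, then $\mh'$ must also lie in the complement, for otherwise the triangle inequality would yield $\rmet^\cM(\mg,\mh) \leq \rmet^\cM(\mg,\mh') + \rmet^\cM(\mh',\mh) < \infty$, a contradiction. Thus any finite-radius ball around $\mh$ sits in the complement, which is therefore open, and $\Comp(\mg)$ is closed. Alternatively, one could argue via sequences using the completeness established in Proposition~\ref{Prop:Complete}, but the direct triangle-inequality argument is cleaner.

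There is no serious obstacle here; the only thing worth noting is the mild care required with the extended-valued target $[0,\infty]$, which is precisely what makes the argument symmetric between ``open'' and ``closed''.
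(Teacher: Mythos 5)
Your proposal is correct and rests on the same mechanism as the paper's proof: the triangle inequality for $\rmet^\cM$ makes finiteness of the distance transitive, which gives openness by including a finite-radius ball about any $\mh\in\Comp(\mg)$ inside $\Comp(\mg)$. For closedness you show the complement is open, whereas the paper runs the equivalent sequential argument (a sequence in $\Comp(\mg)$ converging to $\mg_\infty$ forces $\rmet^\cM(\mg,\mg_\infty)<\infty$); this is only a cosmetic difference, and both versions are fine.
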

\begin{proof}
Fix $\mg \in \Met(\cM)$. 
Firstly, we show that $\Comp(\mg)$ is an open set of $\Met(M)$ by showing that for every $\mh \in \Comp(\mg)$ and every $R > 0$, we have $\Ball(R,\mh) \subset \Comp(\mg)$. 
So fix $R > 0$ and let $\mh' \in \Ball(R, \mh)$ which is if and only if $\rmet^\cM(\mh, \mh') < R$.
Now,
\[ 
\rmet^\cM(\mg,\mh') \leq \rmet^\cM(\mg, \mh) + \rmet^\cM(\mh,\mh') < \rmet^\cM(\mg, \mh)  + R < \infty
\]
since $\rmet^\cM(\mg,\mh) < \infty$.
That is, $\mh' \in \Comp(\mg)$ and therefore $\Comp(\mg) = \union_{R > 0} \Ball(R,\mh)$. 

To show that $\Comp(\mg)$ is closed, let  $\{\mg_n\} \subset \Comp(\mg)$ be a sequence of rough Riemannian metrics with $\mg \in \Met(\cM)$ with $\rmet^\cM(\mg_n,\mg_\infty) \to 0$. 
We prove that $\mg_{\infty} \in \Comp(\mg)$ by showing that $\rmet^\cM(\mg,\mg_\infty) < \infty$ 
By the convergence, for say $\epsilon = 1$, we obtain a  $N > 0$ such that $\met(\mg_N, \mg_\infty) <  1 < \infty$. 
Since $\mg_n \in \Comp(\mg)$, we have that
\[ 
\rmet^\cM(\mg, \mg_\infty) <  \rmet^\cM(\mg_N,\mg) + \rmet^\cM(\mg_N,\mg_\infty) < \infty.
\]
Therefore $\Comp(\mg)$ is closed. 
\end{proof}

Although standard, we state and prove the following lemma as it is crucially used in the proof of Theorem~\ref{Thm:CompConnected}.
\begin{lem}
\label{fun.cal.lem}
Let $\mathcal{H}$ be a Hilbert space and $T:\mathcal{H}\to \mathcal{H}$ a bounded self-adjoint operator. 
Suppose  there exists $0 < a < b < \infty$ such that $\spec(T) \subset [a, b]$
Given an $\alpha \in [0, \infty)$, the fractional power $T^\alpha$ is again a bounded, positive self-adjoint operator and 
\[ 
a^\alpha \leq \norm{T^\alpha} \leq b^\alpha.
\]
\end{lem}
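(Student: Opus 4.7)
The plan is to invoke the continuous functional calculus for a bounded self-adjoint operator. Since $T$ is bounded and self-adjoint with $\spec(T) \subset [a,b] \subset (0,\infty)$, the spectral theorem furnishes an isometric $\ast$-homomorphism $\Phi: C(\spec(T)) \to B(\mathcal{H})$ satisfying $\Phi(\ident_{\spec(T)}) = T$. The function $f_\alpha: [a,b] \to \R$ given by $f_\alpha(t) = t^\alpha$ is continuous (the ambiguity at $t = 0$ is avoided since $a > 0$), so its restriction to $\spec(T)$ lies in $C(\spec(T))$, and I would define $T^\alpha := \Phi(f_\alpha)$.

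Self-adjointness of $T^\alpha$ is immediate from the fact that $f_\alpha$ is real-valued and $\Phi$ is a $\ast$-homomorphism. Positivity follows since $f_\alpha \geq a^\alpha > 0$ pointwise on $\spec(T)$; in particular, $1/f_\alpha \in C(\spec(T))$ yields a bounded inverse $T^{-\alpha} = \Phi(1/f_\alpha)$ if needed.

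For the norm bounds, the key ingredient is that $\Phi$ is an isometry, which gives
\[
\norm{T^\alpha} = \norm{f_\alpha}_{C(\spec(T))} = \sup_{t \in \spec(T)} t^\alpha.
\]
Since $\mathcal{H}$ is tacitly assumed non-trivial, $\spec(T)$ is a non-empty compact subset of $[a,b]$, and the monotonicity of $s \mapsto s^\alpha$ on $(0,\infty)$ for $\alpha \geq 0$ gives $a^\alpha \leq t^\alpha \leq b^\alpha$ for every $t \in \spec(T)$. Taking the supremum then produces the claimed two-sided bound $a^\alpha \leq \norm{T^\alpha} \leq b^\alpha$.

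I expect no genuine obstacle: the lemma is essentially a direct transcription of the spectral theorem together with the spectral mapping property $\spec(T^\alpha) = f_\alpha(\spec(T)) \subset [a^\alpha, b^\alpha]$. The only mild subtleties are the edge case $\alpha = 0$ (where $T^0 = I$ and the bounds degenerate to $1 \leq 1 \leq 1$) and the non-emptiness of $\spec(T)$, both of which are handled by standard conventions. Alternatively, one could bypass the full spectral theorem by constructing $T^\alpha$ as a norm limit of polynomials in $T$ via Stone--Weierstrass approximation of $f_\alpha$ on $[a,b]$, but the content of the argument is unchanged.
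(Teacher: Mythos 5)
Your proposal is correct and follows essentially the same route as the paper: define $T^\alpha = f_\alpha(T)$ by the continuous functional calculus and read off the norm bounds from the spectral mapping theorem together with the fact that the norm of a self-adjoint operator equals its spectral radius. The only cosmetic difference is that you obtain the lower bound $a^\alpha \leq \norm{T^\alpha}$ directly from the isometry of the calculus and the non-emptiness of $\spec(T)$, whereas the paper derives it by bounding $\norm{T^{-\alpha}}$ from above; both are fine.
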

\begin{proof}
For $\beta \in \R$, $f_\beta: [a,b] \to \R$ given by $f_\beta(x) = x^\beta$ is continuous on $[a,b]$ (and therefore Borel) so $f_{\alpha}(T) = T^\alpha: \Hil \to \Hil$ is again a positive bounded self-adjoint operator.
Moreover, $\norm{T^\alpha}$ is equal to the spectral radius of $T^\alpha$ as $T^\alpha$ is self-adjoint. 
By the spectral mapping theorem using the continuity of $f_\alpha$, we have that $\spec(f_\alpha(T)) = f_{\alpha}(\spec(T))$.
Hence, we obtain $\norm{T^\alpha} \leq b^\alpha$.

Now consider $T^{-\alpha} = f_{-\alpha}(T)$.
Again, the spectral mapping theorem provides $\spec(T^{-\alpha}) \subset [b^{-\alpha}, a^{-\alpha}]$ and therefore, $\norm{T^{-\alpha}} \leq a^{-\alpha}$. 
This is equivalent to  $a^{\alpha} \geq \norm{T^\alpha}$ which together with the earlier estimate finishes the proof. 
\end{proof}

\begin{proof}[Proof of Theorem \ref{Thm:CompConnected}]
To prove $\Comp(\mg)$ is path connected, given $\mg_0, \mg_1 \in \Comp(\mg)$, we show there exists $f\in \Ck{0}([0,1], \Comp(\mg))$ such that $f(0) = \mg_0$ and $f(1) = \mg_1$.
From Theorem~\ref{Thm:Stab}, let $B \in \Ell(\cM)$ such that $\mg_1[u,v] = \mg_0[Bu,Bv]$.
Without loss of generality, we assume that $B$ is $\mg_0$-self-adjoint (as we can consider $\sqrt{B^{\ast,\mg}B}$ in place of $B$ to yield the same transformation).
For $t \in [0,1]$, define 
\[
\mg_t(x)(u,u)=\mg(x)[B^t(x)u,B^t(x)u] 
\]
for $u \in \tanb_x\cM$.

Note now that 
\[
a^{-1} \leq \norm{B(x)}_{\op,\mg_0(x)} \leq a
\]
where $a = \sup_{x\in \cM} \modulus{B(x)}_{\op,\mg_0(x)} < \infty$.
On application of Lemma~\ref{fun.cal.lem}, we deduce that $a^{-t} \leq B^t(x) \leq  a^t$ almost-everywhere $x \in \cM$.
In particular, this shows that $\mg_t \in \Comp(\mg)$.
j\ \text{times}
 
Define $f:[0,1]\to \Comp(\mg)$ by $f(t):=\mg_t$ and show that $f$ is continuous on $[0,1]$.
That is, for a fixed $t\in[0,1]$ and for any given $\epsilon>0$, there exits $\delta>0$ such that, whenever $|t-s|<\delta$, for all $s\in [0,1]$, then $\rmet^\cM(\mg_t,\mg_s)<\epsilon$.
This is equivalent to saying that 
\[
\e^{-\epsilon}\leq \norm{B^s(x) B^{-t}(x)}_{\op, \mg_0(x)} \leq \e^{\epsilon}.
\]

Fix $\epsilon > 0$. 
We consider the case that $s \in \R$ such that  $s - t > 0$.  
By functional calculus, $B^{s}(x)B^{-t}(x) = B^{s-t}(x)$ and so choosing  $\alpha= s-t > 0$ and setting $\delta= \frac{\epsilon}{\log a}$, we find that
\[
\e^{-\epsilon}\leq a^{-\alpha}\leq \norm{B^{s-t}(x)}_{\op,\mg_0(x)} \leq a^{\alpha}\leq \e^{\epsilon}. 
\]
Now, for $t - s > 0$, we consider instead $B^{t-s}(x)$ but the estimate remains the same with the same value of $\delta$. 
Trivially for $t = s$, we have that $B^0 = I$ which corresponds to $\epsilon = 0$.
Together, we see that $f \in \Ck{0}([0,1], \Comp(\mg))$ and hence, the subspace $\Comp(\mg)$ is path-connected. 
\end{proof}

\subsection{The length space property}

Using a similar construction as in the proof of Theorem~\ref{Thm:CompConnected} via Lemma~\ref{fun.cal.lem}, we show that between any two metrics, there exists a midpoint.
\begin{prop}
Let $\mg_0, \mg_1 \in \Comp(\mg)$.
Then, there exists $\mh \in \Comp(\mg)$ such that 
\[
\frac{1}{2}\rmet^{\cM}(\mg_0,\mg_1)=\rmet^{\cM}(\mg_1,\mh)=\rmet^{\cM}(\mg_0,\mh).
\]
\end{prop}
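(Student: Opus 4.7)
The plan is to construct $\mh$ via functional calculus on the endomorphism representing the change from $\mg_0$ to $\mg_1$, mirroring and localising the one-parameter family used in the proof of Theorem~\ref{Thm:CompConnected}. First I invoke Theorem~\ref{Thm:Stab}~\ref{Thm:Stab:iii} to obtain $B \in \Ell(\cM)$, pointwise almost-everywhere $\mg_0$-self-adjoint, with $\mg_1(x)[u,v] = \mg_0(x)[B(x)u, B(x)v]$. Replacing $B$ by $\sqrt{B^{\ast,\mg_0} B}$ if necessary I may assume $B(x)$ is positive pointwise almost-everywhere, so that fibrewise Borel functional calculus yields $\sqrt{B} \in \Ell(\cM)$ with the uniform upper and lower bounds guaranteed by Lemma~\ref{fun.cal.lem}. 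I then set
\[
\mh(x)[u,v] := \mg_0(x)\bigl[\sqrt{B(x)}\,u,\, \sqrt{B(x)}\,v\bigr],
\]
and Theorem~\ref{Thm:Stab}~\ref{Thm:Stab:i} gives $\mh \in \Met(\cM)$; the bounds on $\sqrt{B}^{\pm 1}$ place $\mh$ in $\Comp(\mg)$.

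Next I will carry out the distance computation in a $\mg_0(x)$-orthonormal eigenbasis for $B(x)$. Writing $B(x) = \diag(\beta_1(x),\ldots,\beta_n(x))$ with $\beta_i > 0$, the three metrics at $x$ become $\mg_0(x) = I$, $\mh(x) = \diag(\beta_i(x))$, and $\mg_1(x) = \diag(\beta_i(x)^2)$. Setting $\phi(x) := \max_i \max(\beta_i(x)^2, \beta_i(x)^{-2})$, the pointwise inequality $C^{-1}|u|_{\mg_0(x)} \leq |u|_{\mg_1(x)} \leq C|u|_{\mg_0(x)}$ for all $u \in \tanb_x\cM$ holds precisely when $C^2 \geq \phi(x)$, whereas the analogous sharp pointwise constants for the pairs $(\mg_0, \mh)$ and $(\mh, \mg_1)$ at $x$ are both equal to $\sqrt{\phi(x)}$.

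The final step is to lift these pointwise identities to the global $\rmet^\cM$. For this I use the sharp characterisation following Definition~\ref{DefRoughdistance} (obtained by letting $\epsilon \to 0$), which gives $\rmet^\cM(\mg, \mg') = \tfrac12 \log \esssup_x \phi_{\mg,\mg'}(x)$ where $\phi_{\mg,\mg'}$ denotes the pointwise optimal squared constant. Combined with the elementary identity $\esssup f^{1/2} = (\esssup f)^{1/2}$ for nonnegative measurable $f$, the midpoint equality follows immediately. The main subtlety I expect is precisely this passage from pointwise to global: one must verify both that $x \mapsto \phi_{\mg,\mg'}(x)$ is measurable (which follows from measurability of $B$ via the continuity of the eigenvalue map on positive matrices) and that the essential supremum of this pointwise quantity actually realises the infimum in Definition~\ref{DefRoughdistance}. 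Once these bookkeeping points are confirmed, the diagonalised computation delivers the midpoint identity without further effort.
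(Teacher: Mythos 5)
Your proposal is correct, and the midpoint metric you construct is exactly the one in the paper: $\mh = \mg_0[\sqrt{B}\,\cdot,\sqrt{B}\,\cdot]$, i.e.\ the point $t=\tfrac12$ on the one-parameter family $\mg_t = \mg_0[B^t\cdot,B^t\cdot]$ from the proof of Theorem~\ref{Thm:CompConnected}. Where you diverge is in the verification. The paper only proves the \emph{upper} bounds $\rmet^{\cM}(\mg_0,\mh),\ \rmet^{\cM}(\mh,\mg_1) \leq \tfrac12\rmet^{\cM}(\mg_0,\mg_1)$, using Lemma~\ref{fun.cal.lem} (spectral mapping for fractional powers of $B$), and then forces equality by playing these against the triangle inequality $\rmet^{\cM}(\mg_0,\mg_1)\leq \rmet^{\cM}(\mg_0,\mh)+\rmet^{\cM}(\mh,\mg_1)$. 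You instead compute both half-distances \emph{exactly}, by identifying $\rmet^{\cM}(\mg,\mg')$ with $\tfrac12\log\esssup_x\phi_{\mg,\mg'}(x)$ for the pointwise sharp squared constant $\phi_{\mg,\mg'}$, diagonalising $B(x)$ fibrewise, and using $\esssup\sqrt{f}=\sqrt{\esssup f}$. Your eigenvalue computation is right (the sharp squared constants for $(\mg_0,\mh)$ and $(\mh,\mg_1)$ are both $\sqrt{\phi}$ by the substitution $v_i=\sqrt{\beta_i}u_i$), and the identification of $\rmet^{\cM}$ with the essential supremum of the pointwise optimal constant is legitimate: one direction is the sharp two-sided bound the paper records after Definition~\ref{DefRoughdistance}, and the other is that $\esssup\sqrt{\phi}$ is itself an admissible comparison constant once measurability of $x\mapsto\phi(x)$ is noted (it is $\max(\lambda_{\max}(B(x))^2,\lambda_{\min}(B(x))^{-2})$, a measurable function of the measurable field $B$). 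What your route buys is a direct equality with no appeal to the triangle inequality; what it costs is exactly the measurability and infimum-attainment bookkeeping you flag, which the paper's softer two-bounds-plus-triangle argument avoids entirely. Either way the proposition follows.
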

\begin{proof}
Fix $\mg$ and let $\mg_0,\mg_1 \in \Comp(\mg)$ as in the hypothesis. 
By Theorem~\ref{Thm:Stab} and Proposition~10~\cite{BRough}, we obtain $B^2 \in \Ell(\cM)$ self-joint with respect to $\mg_0$  such that 
\[
\mg_1(x)[u,v]
=\mg_0(x)[B(x)u, B(x)v].
=\mg_0(x)[B^2(x)u,v]
\]
Fix the continuous path $[0,1] \ni t \mapsto \mg_t := \mg_0(x)[B^t(x)u, B^t(x)u ]$ as in the proof of Theorem~\ref{Thm:CompConnected}.
Clearly, by construction, the end points $t = 0,1$ agree with our given metrics $\mg_0$ and $\mg_1$ respectively.

Let $\mh := \mg_{\frac{1}{2}}$. 
We claim that this is the midpoint.
First, we show that $\rmet^\cM(\mh, \mg_i) \leq \frac{1}{2}\rmet^{\cM}(\mg_0,\mg_1)$ for $i = 0, 1$.
From $\mg_0, \mg_1 \in \Comp(\mg)$, we have that for almost-every $x \in \cM$, 
\[
\e^{-\rmet^{\cM}(\mg_0,\mg_1)}|u|_{\mg_0(x)}\leq |u|_{\mg_1(x)}\leq \e^{-\rmet^{\cM}(\mg_0,\mg_1)}|u|_{\mg_0(x)}.
\]
This yields
\[
\e^{-\rmet^{\cM}(\mg_0,\mg_1)}\leq \norm{B(x)}_{\op,\mg_0(x)} \leq  \e^{ \rmet^{\cM}(\mg_0,\mg_1)}.
\]
By applying Lemma \ref{fun.cal.lem} with a choice of $b = a^{-1}$ and with $a = \e^{\rmet^{\cM}(\mg_0,\mg_1)}$, we get  
\[
\e^{-\frac{1}{2}\rmet^{\cM}(\mg_0,\mg_1)}\leq \norm{B^{\frac{1}{2}}(x)}_{\op,\mg_0} \leq \e^{\frac{1}{2}\rmet^{\cM}(\mg_0,\mg_1)}.
\]
Therefore, 
\[
\e^{-\frac{1}{2}\rmet^{\cM}(\mg_0,\mg_1)}\modulus{u}_{\mg_0(x)} \leq \norm{u}_{\mh(x)} \leq \norm{u}_{\mh(x)}  \e^{\frac{1}{2}\rmet^{\cM}(\mg_0,\mg_1)}.
\]
Therefore, by definition, 
\[
\rmet^{\cM}(\mg_0, \mh) \leq \log \cbrac{\e^{\frac{1}{2}\rmet^{\cM}(\mg_0,\mg_1)}} = \frac{1}{2}\rmet^{\cM}(\mg_0,\mg_1).
\]

Since $\mg_1[u,v] = \mg_0[Bu,Bv]$ we have that $\mg_0[u,v] = \mg_{1}[B^{-1}u, B^{-1}v]$. 
Consider the curve $t \mapsto \mg_{t}'$ defined by $\mg_{t}'[u,v] = \mg_{1}[B^{-t}u, B^{-t}v]$.
Clearly, $\mg_{\frac12}' = \mh$. 
Applying the same argument to $\mg_{\frac12}'$, as we did to $\mg_{\frac12}'$, we obtain $\rmet^{\cM}(\mg_1, \mh) \leq \frac{1}{2}\rmet^{\cM}(\mg_0,\mg_1)$.

Combining these estimates using the triangle inequality, we have  
\[
\rmet^{\cM}(\mg_0,\mg_1) \leq \rmet^{\cM}(\mg_0,\mh) + \rmet^{\cM}(\mh,\mg_1) \leq\frac{1}{2}\rmet^{\cM}(\mg_0,\mg_1) + \frac{1}{2}\rmet^{\cM}(\mg_0,\mg_1)
\]
and so 
\[
\rmet^{\cM}(\mg_0,\mh)+\rmet^{\cM}(\mg_1,\mh)=\rmet^{\cM}(\mg_0,\mg_1).
\]
Adding $-\rmet^{\cM}(\mg_0, \mh) \geq -\frac{1}{2}\rmet^{\cM}(\mg_0,\mg_1)$ successively then produces the lower bound
\[
\rmet^{\cM}(\mg_1, \mh) \geq \frac{1}{2}\rmet^{\cM}(\mg_0,\mg_1).
\]
Therefore, $\rmet^{\cM}(\mg_1, \mh) = \frac{1}{2}\rmet^{\cM}(\mg_0,\mg_1)$.
Similarly, we find that $\rmet^{\cM}(\mg_0, \mh) = \frac{1}{2}\rmet^{\cM}(\mg_0,\mg_1)$
\end{proof}

Since for complete metrics, the midpoint property implies that the metric is intrinsic, we obtain the following important consequence.
\begin{cor}
\label{Cor:Length}
The space $(\Met(\cM),\rmet^{\cM})$ is a length space.
\qedhere
\end{cor}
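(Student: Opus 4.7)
The plan is to deduce the length space property from the midpoint property established in the preceding proposition, combined with the completeness of $(\Met(\cM),\rmet^\cM)$ proved in Proposition~\ref{Prop:Complete}. Recall the classical Menger-type theorem: a complete metric space in which every pair of points admits a midpoint is a geodesic space, hence a length space. The work therefore reduces to a careful bookkeeping exercise rather than any new geometric insight.

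First I would reduce to the case of two metrics $\mg_0,\mg_1$ lying in the same component. If $\rmet^\cM(\mg_0,\mg_1)=+\infty$, then $\mg_0$ and $\mg_1$ are in distinct components $\Comp(\mg_0)\neq\Comp(\mg_1)$, and by Lemma~\ref{lem_closed_open} these are clopen, so no continuous curve in $(\Met(\cM),\rmet^\cM)$ can connect them. The infimum of lengths of such curves is then $+\infty$ by convention, matching the distance. So assume $\mg_0,\mg_1\in\Comp(\mg)$ and set $L:=\rmet^\cM(\mg_0,\mg_1)<\infty$.

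Next I would iteratively invoke the midpoint property to build a map $\gamma$ on the dyadic rationals $D:=\{k/2^n:n\in\Na,\ 0\leq k\leq 2^n\}\subset[0,1]$ with the property that $\rmet^\cM(\gamma(s),\gamma(t))\leq L\modulus{s-t}$ for all $s,t\in D$. Setting $\gamma(0):=\mg_0$ and $\gamma(1):=\mg_1$, the preceding proposition supplies $\gamma(1/2)\in\Comp(\mg)$ with $\rmet^\cM(\gamma(0),\gamma(1/2))=\rmet^\cM(\gamma(1/2),\gamma(1))=L/2$. Applying the midpoint property to each consecutive pair already constructed, one produces $\gamma(k/2^n)$ for all $n$ and $k$ with $\rmet^\cM(\gamma(k/2^n),\gamma((k+1)/2^n))=L/2^n$. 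The triangle inequality then yields the global Lipschitz bound on $D$.

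For the continuous extension, I would use completeness of $(\Comp(\mg),\rmet^\cM\rest{\Comp(\mg)})$, which is inherited from Proposition~\ref{Prop:Complete} because $\Comp(\mg)$ is closed by Lemma~\ref{lem_closed_open}. The Lipschitz map $\gamma\rest{D}$ is uniformly continuous on a dense subset of $[0,1]$, so it extends uniquely to $\tilde\gamma\in\Ck{0,1}([0,1];\Comp(\mg))$ with $\rmet^\cM(\tilde\gamma(s),\tilde\gamma(t))\leq L\modulus{s-t}$. In particular, for any partition $0=t_0<t_1<\dots<t_N=1$,
\begin{equation*}
\sum_{i=1}^N \rmet^\cM(\tilde\gamma(t_{i-1}),\tilde\gamma(t_i))\leq L\sum_{i=1}^N (t_i-t_{i-1})=L,
\end{equation*}
so $\len_{\rmet^\cM}(\tilde\gamma)\leq L$. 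The reverse estimate $L=\rmet^\cM(\tilde\gamma(0),\tilde\gamma(1))\leq \len_{\rmet^\cM}(\tilde\gamma)$ is the triangle inequality applied to the trivial partition. Thus $\len_{\rmet^\cM}(\tilde\gamma)=L=\rmet^\cM(\mg_0,\mg_1)$, which identifies $\tilde\gamma$ as a minimising geodesic and proves that $(\Met(\cM),\rmet^\cM)$ is a length (indeed geodesic) extended metric space. The main technical point is merely ensuring that the iterative midpoint construction preserves membership in $\Comp(\mg)$ throughout, which is automatic because the midpoint provided by the previous proposition lies in the same component as its endpoints.
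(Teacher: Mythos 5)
Your proposal is correct and follows essentially the same route as the paper: the paper simply invokes the standard fact that a complete metric space with midpoints is intrinsic, citing the preceding midpoint proposition and Proposition~\ref{Prop:Complete}, whereas you unfold the proof of that standard fact (dyadic midpoint iteration, Lipschitz extension by completeness of the clopen component, length computation). Your explicit treatment of the infinite-distance case via Lemma~\ref{lem_closed_open} is a welcome detail the paper leaves implicit.
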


\subsection{Characterisation of compactness}

Lastly, we provide a proof of Theorem~\ref{Thm_com_iff_conn}, which characterises the compactness of $\cM$ through $\Met(\cM)$.

\begin{proof}[Proof of Theorem~\ref{Thm_com_iff_conn}]
We first prove that if $\cM$ is compact, then $(\Met(\cM),\rmet^{\cM})$ is connected. 
Fix $\mg \in \Met(\cM)$. 
By Lemma~\ref{Lem:RoughMetEquiv}, this is equivalent to $\mg, \mg^{-1} \in \Lp[loc]{\infty}(\cM;\Tensors[2,0]\cM)$. 
However, on a compact manifold $\Lp[loc]{\infty}(\cM;\Tensors[2,0]\cM) = \Lp{\infty}(\cM;\Tensors[2,0]\cM,\mh)$ with respect to any $\mh \in \Met_{\Ck{\infty}}(\cM)$.
Therefore, $\rmet^{\cM}(\mg,\mh) < \infty$ for each $\mh \in \Met_{\Ck{\infty}}(\cM)$.
By the triangle inequality, we see that $\rmet(\mg,\mg') < \infty$ for any $\mg, \mg' \in \Met(\cM)$ and hence $(\Met(\cM),\rmet^{\cM})$ is connected.

We prove the converse by proving the contrapositive of the converse. 
That is, we prove that if $\cM$ is not compact, then $(\Met(\cM),\rmet^{\cM})$ is disconnected.
Suppose that $\cM$ is not compact.
This means there is a cover $\{U_{\alpha}\}$ for $\cM$ which does not possess a finite subcover. 
However, since $\cM$ is Lindelöf, there is a countable subcover $\set{U_j}_{j=1}^\infty$.

Via this, we create a countable disjoint Borel cover for $\cM$ as follows.
Let 
\[
V_1 := U_1 \quad\text{and}\quad  V_j := U_j\setminus (\cup_{i=1}^{j-1}V_i).
\] 
Define $f: \cM \to [0, \infty)$ by $f(x)=2^j$ for $x\in V_j$. 
Since the sets $V_i$ are Borel, this is clearly a Borel function. 

Now, fix any $\mg \in \Met(\cM)$ and fine $\mg_f := f\mg$. 
We claim $\mg_f$ is a rough Riemannian metric. 
Fix $x\in M$, there exists a chart $U_i$ such that $x\in U_i$ for which the local comparability condition hold for the metric $\mg$ with constant $C_i \geq 1 $ such that 
\[C_i^{-1}|u|_{\psi^*\delta(y)}\leq |u|_{g(y)}\leq C_i|u|_{\psi*\delta(y)},
\]
for almost every $y\in U_i$. 

Now, note that 
\[ 
|u|_{\mg_f(y)}=\sqrt{f(y)}|u|_{\mg(y)} \geq \modulus{u}_{\mg(y)}
\]
since $f \geq 1$.
Moreover, for each $y\in U_i$, there exits $j$ such that either $y \in V_j$ or $y\in \cup_{i=1}^{j-1}V_i$. 
In either case,  $f(y) \leq 2^{j}$.
Therefore, 
\[
2^{-\frac j2} C_u^{-1}|u|_{\psi^*\delta(y)}\leq |u|_{\mg_f(y)} \leq 2^{\frac j2} C_u|u|_{\psi*\delta(y)}.
\]
This shows that $\mg_f \in \Met(\cM)$.

It remains to show that $\rmet^\cM(\mg_f,\mg)=\infty$. 
This can be argued by contradiction. 
Suppose there exists $\rmet^{\cM}(\mg_f,\mg) < \infty$. 
Then,
\[
\e^{-\rmet^{\cM}(\mg_f,\mg)}\modulus{u}_{\mg} \leq \modulus{u}_{\mg_f} \leq \e^{\rmet^{\cM}(\mg_f,\mg)} \modulus{u}_{\mg}. 
\]
By choosing $j > \rmet^{\cM}(\mg_f,\mg)/\log(2)$, on $V_j$ we have 
\[ 
\modulus{u}_{\mg_f(y)}  = 2^j \modulus{u}_{\mg(y)}  \geq \e^{\rmet^{\cM}(\mg_f,\mg)} \modulus{u}_{\mg}.
\]
This is a contradiction so $\rmet^\cM(\mg_f,\mg)=\infty$.
In particular, $\Comp(\mg) \cap \Comp(\mg_f) = \emptyset$ and therefore, $\Comp(\mg) \subsetneqq \Met(\cM)$.
Since Lemma~\ref{lem_closed_open} establishes that $\Comp(\mg)$ is both open and closed, we conclude that $(\Met(\cM),\rmet^{\cM})$ is disconnected.
\end{proof}

\begin{rem}
Note that the same proof shows that, if $\cM$ is not compact, we have at least countable number of disjoint components. 
For that, define $\Phi: \Met(\cM) \to \Met(\cM)$ by $\Phi(\mg) = \mg_f$.
Denote the $j$ product of composition for $j \in \Na$  by
\[
\Phi_j := \underbrace{(\Phi\circ \dots \circ \Phi)}_{j\ \text{times}}.
\] 
Then, we have $\Comp(\Phi_1(\mg)) \neq \Comp(\Phi_2(\mg)) \neq \Comp(\Phi_3(\mg) \dots \neq \Comp(\Phi_j(\mg) \neq \dots$. 
Through an argument similar to that used in the proof above, we can show that $\Comp(\Phi_i(\mg))  \cap \Comp(\Phi_j(\mg)) = \emptyset$ for $i \neq j$.
Therefore, $\sqcup_{j=1}^\infty \Comp(\Phi_j(\mg)) \subset \Met(\cM)$.
\end{rem}

\bibliographystyle{plain} 
\begin{bibdiv}
\begin{biblist}

\bib{AHLMcT}{article}{
      author={Auscher, Pascal},
      author={Hofmann, Steve},
      author={Lacey, Michael},
      author={McIntosh, Alan},
      author={Tchamitchian, Philippe},
       title={The solution of the {K}ato square root problem for second order
  elliptic operators on {${\Bbb R}\sp n$}},
        date={2002},
     journal={Annals of Mathematics. Second Series},
      volume={156},
       pages={633\ndash 654},
}

\bib{AMR}{article}{
      author={Auscher, Pascal},
      author={Morris, Andrew~J.},
      author={Rosén, Andreas},
       title={Quadratic estimates for degenerate elliptic systems on manifolds
  with lower {Ricci} curvature bounds and boundary value problems},
    language={English},
        date={2025},
        ISSN={1019-8385},
     journal={Communications in Analysis and Geometry},
      volume={33},
      number={2},
       pages={403\ndash 451},
}

\bib{AKMC}{article}{
      author={Axelsson, Andreas},
      author={Keith, Stephen},
      author={McIntosh, Alan},
       title={Quadratic estimates and functional calculi of perturbed {D}irac
  operators},
        date={2006},
     journal={Inventiones Mathematicae},
      volume={163},
       pages={455\ndash 497},
}

\bib{BRough}{article}{
      author={Bandara, Lashi},
       title={Rough metrics on manifolds and quadratic estimates},
        date={2016},
     journal={Mathematische Zeitschrift},
      volume={283},
       pages={1245\ndash 1281},
}

\bib{BCont}{article}{
      author={Bandara, Lashi},
       title={Continuity of solutions to space-varying pointwise linear
  elliptic equations},
        date={2017},
     journal={Publicacions Matemàtiques},
      volume={61},
       pages={239\ndash 258},
}

\bib{BanBry}{article}{
      author={Bandara, Lashi},
      author={Bryan, Paul},
       title={Heat kernels and regularity for rough metrics on smooth
  manifolds},
    language={English},
        date={2020},
        ISSN={0025-584X},
     journal={Mathematische Nachrichten},
      volume={293},
      number={12},
       pages={2255\ndash 2270},
}

\bib{BH}{article}{
      author={{Bandara}, Lashi},
      author={{Habib}, Georges},
       title={{Geometric singularities and Hodge theory}},
        date={2024-07},
     journal={arXiv e-prints},
       pages={arXiv:2407.01170},
}

\bib{BLM}{article}{
      author={Bandara, Lashi},
      author={Lakzian, Sajjad},
      author={Munn, Mike},
       title={Geometric singularities and a flow tangent to the {R}icci flow},
        date={2017},
     journal={Annali della Scuola Normale Superiore di Pisa. Classe di Scienze.
  Serie V},
      volume={17},
       pages={763\ndash 804},
}

\bib{BMc}{article}{
      author={Bandara, Lashi},
      author={McIntosh, Alan},
       title={The {K}ato {S}quare {R}oot {P}roblem on {V}ector {B}undles with
  {G}eneralised {B}ounded {G}eometry},
        date={2016},
     journal={Journal of Geometric Analysis},
      volume={26},
       pages={428\ndash 462},
}

\bib{BNR}{article}{
      author={Bandara, Lashi},
      author={Nursultanov, Medet},
      author={Rowlett, Julie},
       title={Eigenvalue asymptotics for weighted {Laplace} equations on rough
  {Riemannian} manifolds with boundary},
    language={English},
        date={2021},
        ISSN={0391-173X},
     journal={Annali della Scuola Normale Superiore di Pisa. Classe di Scienze.
  Serie V},
      volume={22},
      number={4},
       pages={1843\ndash 1878},
}

\bib{Burtscher}{article}{
      author={Burtscher, Annegret~Y.},
       title={Length structures on manifolds with continuous {Riemannian}
  metrics},
    language={English},
        date={2015},
        ISSN={1076-9803},
     journal={The New York Journal of Mathematics},
      volume={21},
       pages={273\ndash 296},
}

\bib{CMMc}{article}{
      author={Coifman, Ronald~R.},
      author={McIntosh, Alan},
      author={Meyer, Yves},
       title={L'int{\'e}grale de {Cauchy} d{\'e}finit un op{\'e}ratuer borne
  sur {{\({L}^ 2 \)}}pour les courbes lipschitziennes},
    language={French},
        date={1982},
        ISSN={0003-486X},
     journal={Annals of Mathematics. Second Series},
      volume={116},
       pages={361\ndash 387},
}

\bib{DP90}{article}{
      author={De~Cecco, Giuseppe},
      author={Palmieri, Giuliana},
       title={Length of curves on {LIP} manifolds},
    language={English},
        date={1990},
        ISSN={1120-6330},
     journal={Atti della Accademia Nazionale dei Lincei. Classe di Scienze
  Fisiche, Matematiche e Naturali. Serie IX. Rendiconti Lincei. Matematica e
  Applicazioni},
      volume={1},
      number={3},
       pages={215\ndash 221},
         url={https://eudml.org/doc/244275},
}

\bib{DP91}{article}{
      author={De~Cecco, Giuseppe},
      author={Palmieri, Giuliana},
       title={Integral distance on a {Lipschitz} {Riemannian} manifold},
    language={English},
        date={1991},
        ISSN={0025-5874},
     journal={Mathematische Zeitschrift},
      volume={207},
      number={2},
       pages={223\ndash 243},
         url={https://eudml.org/doc/174267},
}

\bib{CeccoPalmieri}{article}{
      author={De~Cecco, Giuseppe},
      author={Palmieri, Giuliana},
       title={Integral distance on a {Lipschitz} {Riemannian} manifold},
    language={English},
        date={1991},
        ISSN={0025-5874},
     journal={Mathematische Zeitschrift},
      volume={207},
      number={2},
       pages={223\ndash 243},
         url={https://eudml.org/doc/174267},
}

\bib{DP95}{article}{
      author={De~Cecco, Giuseppe},
      author={Palmieri, Giuliana},
       title={{LIP} manifolds: from metric to {Finslerian} structure},
    language={English},
        date={1995},
        ISSN={0025-5874},
     journal={Mathematische Zeitschrift},
      volume={218},
      number={2},
       pages={223\ndash 237},
         url={https://eudml.org/doc/174726},
}

\bib{DeGiorgi}{misc}{
      author={De~Giorgi, Ennio},
       title={Sulla differenziabilit{\`a} e l'analiticit{\`a} delle estremali
  degli integrali multipli regolari},
    language={Italian},
         how={Mem. {Accad}. {Sci}. {Torino}, {P}. {I}., {III}. {Ser}. 3, 25-43
  (1957).},
        date={1957},
}

\bib{GM}{article}{
      author={Gigli, Nicola},
      author={Mantegazza, Carlo},
       title={A flow tangent to the {R}icci flow via heat kernels and mass
  transport},
        date={2014},
     journal={Advances in Mathematics},
      volume={250},
       pages={74\ndash 104},
}

\bib{Kato61}{article}{
      author={Kato, Tosio},
       title={Fractional powers of dissipative operators},
    language={English},
        date={1961},
        ISSN={0025-5645},
     journal={Journal of the Mathematical Society of Japan},
      volume={13},
       pages={246\ndash 274},
}

\bib{Kato}{book}{
      author={Kato, Tosio},
       title={"{P}erturbation theory for linear operators"},
   publisher={Springer-Verlag},
     address={Berlin},
        date={1976},
}

\bib{McIntosh72}{article}{
      author={McIntosh, Alan},
       title={On the comparability of {A}{{\(^{1/2}\)}} and
  {A}{{\(^{*1/2}\)}}},
    language={English},
        date={1972},
        ISSN={0002-9939},
     journal={Proceedings of the American Mathematical Society},
      volume={32},
       pages={430\ndash 434},
}

\bib{Morris}{article}{
      author={Morris, Andrew~J.},
       title={The {Kato} square root problem on submanifolds},
    language={English},
        date={2012},
        ISSN={0024-6107},
     journal={Journal of the London Mathematical Society. Second Series},
      volume={86},
      number={3},
       pages={879\ndash 910},
}

\bib{Mo1}{article}{
      author={Moser, J\"urgen},
       title={A {H}arnack inequality for parabolic differential equations},
        date={1964},
        ISSN={0010-3640},
     journal={Communications on Pure and Applied Mathematics},
      volume={17},
       pages={101\ndash 134},
}

\bib{Mo2}{article}{
      author={Moser, J\"urgen},
       title={On a pointwise estimate for parabolic differential equations},
        date={1971},
        ISSN={0010-3640},
     journal={Communications on Pure and Applied Mathematics},
      volume={24},
       pages={727\ndash 740},
}

\bib{Nash}{article}{
      author={Nash, John~F.},
       title={Continuity of solutions of parabolic and elliptic equations},
    language={English},
        date={1958},
        ISSN={0002-9327},
     journal={American Journal of Mathematics},
      volume={80},
       pages={931\ndash 954},
  url={semanticscholar.org/paper/296e4853963ff4e3119f26eb342fafaa142b1850},
}

\bib{Norris}{article}{
      author={Norris, James~R.},
       title={Heat kernel asymptotics and the distance function in {L}ipschitz
  {R}iemannian manifolds},
        date={1997},
     journal={Acta Mathematica},
      volume={179},
       pages={79\ndash 103},
}

\bib{SC}{article}{
      author={Saloff-Coste, Laurent},
       title={Uniformly elliptic operators on {R}iemannian manifolds},
        date={1992},
     journal={Journal of Differential Geometry},
      volume={36},
       pages={417\ndash 450},
}

\bib{Sturm}{article}{
      author={Sturm, Karl-Theodor},
       title={Is a diffusion process determined by its intrinsic metric?},
    language={English},
        date={1997},
        ISSN={0960-0779},
     journal={Chaos, Solitons and Fractals},
      volume={8},
      number={11},
       pages={1855\ndash 1860},
}

\bib{Teleman}{article}{
      author={Teleman, Nicolae},
       title={The index of signature operators on {Lipschitz} manifolds},
    language={English},
        date={1983},
        ISSN={0073-8301},
     journal={Publications Math{\'e}matiques},
      volume={58},
       pages={251\ndash 290},
         url={https://eudml.org/doc/103993},
}

\bib{ERS}{article}{
      author={ter Elst, A. F.~M.},
      author={Robinson, Derek~W.},
      author={Sikora, Adam},
       title={Small time asymptotics of diffusion processes},
    language={English},
        date={2007},
        ISSN={1424-3199},
     journal={Journal of Evolution Equations},
      volume={7},
      number={1},
       pages={79\ndash 112},
}

\end{biblist}
\end{bibdiv}

\setlength{\parskip}{0pt}

\end{document}